\newtheorem{thm}{Theorem}[section]    
\newtheorem{lem}[thm]{Lemma}          
\newtheorem{prop}[thm]{Proposition}
\newtheorem{axiom}{Axiom}
\theoremstyle{definition}
\newtheorem{defn}[thm]{Definition}    
\newtheorem{rem}[thm]{Remark}             
\newtheorem{exa}[thm]{Example}
\theoremstyle{mytheoremstyle}
\newtheorem{cor}[thm]{Corollary}
\numberwithin{equation}{section}
\newcommand*{\longhookrightarrow}{\ensuremath{\lhook\joinrel\relbar\joinrel\rightarrow}}
\def\abb{{\mathbb{A}}}
\def\cbb{{\mathbb{C}}}
\def\fbb{{\mathbb{F}}}
\def\qbb{{\mathbb{Q}}}
\def\rbb{{\mathbb{R}}}
\def\zbb{{\mathbb{Z}}}
\def\ccal{{\mathcal{C}}}
\def\dcal{{\mathcal{D}}}
\def\fcal{{\mathcal{F}}}
\def\ical{{\mathcal{I}}}
\def\jcal{{\mathcal{J}}}
\def\ocal{{\mathcal{O}}}
\def\pcal{{\mathcal{P}}}
\def\scal{{\mathcal{S}}}
\def\tcal{{\mathcal{T}}}
\def\vcal{{\mathcal{V}}}
\def\xfra{{\mathfrak{X}}}
\def\czero{{\mathcal{C}^0}}
\numberwithin{equation}{section}
\newcommand{\bvec}[1]{\mbox{\boldmath $#1$}}
\begin{document}


\baselineskip=15pt 
\abovedisplayskip=1ex
\belowdisplayskip=1ex

\captionsetup[figure]{labelformat={default},labelsep=colon,name={Fig.}}

\title{Smooth singular complexes and \\ diffeological principal bundles}

\author{Hiroshi Kihara}

\affil{}
\date{}
\maketitle


\renewcommand{\thefootnote}{}

\footnote{ \emph{Date}: November 8, 2021}
\footnote{Mathematics Subject classification. 58A40 (Primary), 55U10, 18F15 (Secondary).}
\footnote{\emph{Key words and phrases}. Smooth singular complex, diffeological principal bundle.}
\footnote{Center for Mathematical Sciences, University of Aizu, 
	Tsuruga, Ikki-machi, Aizu-Wakamatsu City, Fukushima, 965-8580, Japan} \footnote{\textbf{e-mail: kihara@u-aizu.ac.jp}}

\renewcommand{\thefootnote}{\arabic{footnote}}
\setcounter{footnote}{0}
\vspace{-15ex}

\begin{abstract}
\noindent \textbf{Abstract}. In previous papers, we used the standard simplices $\Delta^p$ $(p\ge 0)$ endowed with diffeologies having several good properties to introduce the singular complex $S^\dcal(X)$ of a diffeological space $X$. On the other hand, Hector and Christensen-Wu used the standard simplices $\Delta^p_{\rm sub}$ $(p\ge 0)$ endowed with the sub-diffeology of $\rbb^{p+1}$ and the standard affine $p$-spaces $\abb^p$ $(p\ge 0)$ to introduce the singular complexes $S^\dcal_{\rm sub}(X)$ and $S^\dcal_{\rm aff}(X)$, respectively, of a diffeological space $X$. In this paper, we prove that $S^\dcal(X)$ is a fibrant approximation both of $S^\dcal_{\rm sub}(X)$ and $S^\dcal_{\rm aff}(X)$. This result easily implies that the homotopy groups of $S^\dcal_{\rm sub}(X)$ and $S^\dcal_{\rm aff}(X)$ are isomorphic to the smooth homotopy groups of $X$, proving a conjecture of Christensen and Wu. Further, we characterize diffeological principal bundles (i.e., principal bundles in the sense of Iglesias-Zemmour) using the singular functor $S^\dcal_{\rm aff}$. By using these results, we extend characteristic classes for $\dcal$-numerable principal bundles to characteristic classes for diffeological principal bundles.
\end{abstract}

\section{Introduction}\label{section1}

In \cite{origin}, we constructed good diffeologies on $\Delta^p=\{(x_0,\ldots,x_p)\in \rbb^{p+1}\:|\: \sum x_i = 1,\: x_i \ge 0 \text{ for any } i \}$ $(p \ge 0)$, and used them to define the singular complex $S^\dcal(X)$ of a diffeological space $X$, introducing a model structure on the category $\dcal$ of diffeological spaces. Further, in \cite{smh}, we also used the singular functor $S^\dcal$ to introduce a simplicial category structure on $\dcal$, and developed a smooth homotopy theory based on the simplicial and model category structures on $\dcal$.

On the other hand, in his study \cite{H} of diffeological spaces, Hector used the sets $\Delta^p$ endowed with the sub-diffeology of $\rbb^{p+1}$ $(p\ge 0)$ to define the singular complex $S^\dcal_{\rm sub}(X)$ of a diffeological space $X$. His singular complex is also used in \cite{Kuri20}. Further, in their attempt of constructing a model structure on $\dcal$ \cite{CW}, Christensen and Wu used the affine spaces $\abb^p=\{(x_0,\ldots,x_p)\in \rbb^{p+1}\:|\: \sum x_i = 1\}$ endowed with the sub-diffeology of $\rbb^{p+1}$ $(p\ge 0)$ to define the singular complex $S^\dcal_{\rm aff}(X)$. Their singular complex is also used in \cite{Kuri20, Kuri20', Bunk}.

As can be seen in the references cited above, the singular complexes $S^\dcal(X)$, $S^\dcal_{\rm sub}(X)$, and $S^\dcal_{\rm aff}(X)$ have played crucial roles in the smooth homotopical study of diffeological spaces. However, the natural weak equivalences between them have not yet been established.

In this paper, we show that the singular complexes $S^\dcal(X)$, $S^\dcal_{\rm sub}(X)$, and $S^\dcal_{\rm aff}(X)$ are weakly equivalent (Theorem \ref{thm1.1}). As a corollary of this result, we identify the homotopy groups of $S^\dcal_{\rm aff}(X)$ and $S^\dcal_{\rm sub}(X)$ with the smooth homotopy groups of $X$, proving a conjecture of Christensen and Wu (Corollary \ref{cor1.2}). Though we mainly use the singular functor $S^{\dcal}$, we also use the singular functor $S^\dcal_{\rm aff}$ to characterize diffeological principal bundles (i.e., principal bundles in the sense of Iglesias-Zemmour) (Theorem \ref{bdletriviality}). This theorem along with the weak equivalence between $S^\dcal_{\rm aff}(X)$ and $S^\dcal(X)$, is used to extend characteristic classes for $\dcal$-numerable principal $G$-bundles to diffeological principal $G$-bundles (Proposition \ref{cor1.3}).

Throughout this paper, $\dcal$ and $\scal$ denote the category of diffeological spaces and the category of simplicial sets, respectively. (See \cite{GJ, May, K} for the basics of simplicial homotopy theory.)

\subsection*{Weak equivalences between $S^\dcal(X)$, $S^\dcal_{\rm sub}(X)$, and $S^\dcal_{\rm aff}(X)$}
The following theorem is the main result of this paper. Note that the canonical maps $\Delta^p \overset{id}{\longrightarrow} \Delta^p_{\rm sub} \longhookrightarrow \abb^p$ $(p\ge 0)$ induce natural morphisms of simplicial sets $S^\dcal_{\rm aff}(X)\longrightarrow S^\dcal_{\rm sub}(X)\longhookrightarrow S^\dcal(X)$ (see Lemma \ref{simplex}(3) and Proposition \ref{natural}). Recall that $S^\dcal(X)$ is always Kan (i.e., fibrant in the category $\scal$); see Corollary \ref{lbryhorn}(1). (Cf. Remark \ref{extra}\textit{}(2).)

\begin{thm}\label{thm1.1}
 The natural morphisms of simplicial sets
 \[
  S^{\dcal}_{\mathrm{aff}}(X)\longrightarrow S^{\dcal}_{\mathrm{sub}}(X) \longhookrightarrow S^{\dcal}(X)
 \]
 are weak equivalences. In particular, $S^{\dcal}(X)$ is a fibrant approximation both of $S^{\dcal}_{\text{\rm aff}}(X)$ and $S^{\dcal}_{\text{\rm sub}}(X)$.
\end{thm}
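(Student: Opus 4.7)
The plan is to prove that each of the two natural morphisms induces bijections on all simplicial homotopy groups, exploiting the fact (recalled in the excerpt) that $S^\dcal(X)$ is Kan. For a map $f\colon L\to K$ into a Kan complex, this reduces to a single criterion: every commutative square
\[
\begin{CD}
\partial\Delta^n @>>> L \\
@VVV @VVfV \\
\Delta^n @>>> K
\end{CD}
\]
admits, up to homotopy rel $\partial\Delta^n$ in $K$, a diagonal filler whose interior factors through $f$. This simultaneously handles surjectivity and injectivity of $\pi_n$ for all $n$.

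First I would address the inclusion $S^\dcal_{\rm sub}(X)\hookrightarrow S^\dcal(X)$. The key technical lemma to prove is a smooth approximation statement: given a $\dcal$-smooth simplex $\sigma\colon\Delta^p\to X$ whose restriction to $\partial\Delta^p$ is already sub-smooth, there exist a sub-smooth simplex $\tau\colon\Delta^p_{\rm sub}\to X$ and a $\dcal$-smooth homotopy from $\sigma$ to $\tau$ that is constant on $\partial\Delta^p$. I would construct this by choosing a one-parameter family of self-maps $r_t\colon\Delta^p\to\Delta^p$, smooth in both the Kihara and the sub-diffeology and fixing $\partial\Delta^p$, which deforms the identity into a map whose image is contained in the region where the two diffeologies coincide (away from the corners where Kihara's construction in \cite{origin} differs from the subspace diffeology). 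Precomposition with $r_t$ then yields the required homotopy, and the endpoint $\sigma\circ r_1$ is sub-smooth because the Kihara diffeology was engineered to agree with the sub-diffeology in the interior.

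Second I would treat $S^\dcal_{\rm aff}(X)\to S^\dcal_{\rm sub}(X)$, where the analogous criterion requires a smooth \emph{extension} statement: every sub-smooth map $\Delta^p_{\rm sub}\to X$, whose restriction to $\partial\Delta^p$ is affine-smooth, is sub-smoothly homotopic rel $\partial\Delta^p$ to the restriction of some affine-smooth map $\abb^p\to X$. Here I would use that $\abb^p$ is an ordinary affine space, so a smooth collar for $\Delta^p_{\rm sub}\subseteq\abb^p$ together with a standard bump-function argument allows one to smoothly extend sub-smooth maps to the ambient affine space, and similarly for homotopies.

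The main obstacle is the first lemma, since it requires fine control over Kihara's diffeology. The homotopy $\sigma\circ r_t$ must be jointly $\dcal$-smooth on $\Delta^p\times I$ while its endpoint is smooth with respect to the strictly stronger sub-diffeology, and $r_t$ itself must be smooth for both diffeologies on $\Delta^p$. I would construct $r_t$ by induction on the skeletal filtration of $\Delta^p$, face by face, so as to remain compatible with the face-inclusion structure underlying Kihara's construction; this is the step that most directly uses the specific good properties of $\Delta^p$ from \cite{origin} that are not available for $\Delta^p_{\rm sub}$.
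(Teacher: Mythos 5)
Your overall reduction is reasonable in outline (the ``lift a square up to homotopy rel $\partial\Delta^n$'' criterion is a valid sufficient condition for a weak equivalence when the target is Kan), but the two key lemmas that would make it work are not correct as stated, and the constructions you propose for them cannot succeed. For Part I, your deformation $r_t$ is required both to fix $\partial\Delta^p$ pointwise and to push the image into the locus where Kihara's diffeology agrees with the sub-diffeology; by Lemma \ref{simplex}(3) that locus is $\Delta^p-{\rm sk}_{p-2}\,\Delta^p$, while the image of any map fixing $\partial\Delta^p$ contains ${\rm sk}_{p-2}\,\Delta^p$, so the two requirements are incompatible for $p\ge 2$. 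More substantively, sub-smoothness near the low-dimensional skeleton is not achieved by avoiding it: the map $\psi^2_0$ that the paper actually uses converts a $\dcal$-smooth $2$-simplex into a sub-smooth one by \emph{collapsing} neighborhoods of the vertices to points, and consequently does not fix the boundary. This is precisely why the paper never proves an approximation statement rel the full boundary in any dimension; instead it establishes homology isomorphisms by acyclic models (Proposition \ref{natural} and Corollary \ref{quism}), reduces to $1$-connected $X$ via universal coverings (Proposition \ref{cover}), and then only needs $\pi_1$ of a fibrant replacement to vanish, where simplices need to be controlled only at their vertices (``tame''), not on the whole boundary. Relatedly, your claim that the square-filling criterion ``simultaneously handles injectivity'' hides the real difficulty: since $S^\dcal_{\rm sub}(X)$ is not Kan, elements of $\pi_1$ of its fibrant replacement are words $\tau_1\cdots\tau_l$ in $1$-simplices and their formal inverses, not single simplices, and reducing such a word to a single simplex is most of the labor in Sections 4.2--4.3.

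For Part II the proposed ``collar plus bump function'' extension cannot work for a general diffeological target $X$: there is no linear structure on $X$ with which to average or patch local extensions, and in fact a sub-smooth simplex $\Delta^p_{\rm sub}\to X$ need not extend to any neighborhood in $\abb^p$ (this failure is essentially why $S^\dcal_{\rm aff}(X)$ need not be Kan; see Remark \ref{extra}(2)), so your extension lemma rel $\partial\Delta^p$ is false as stated. The paper's Lemma \ref{lem3.3} extends only very special, boundary-collapsed $2$-simplices from $\Delta^2$ to $\abb^2$, by declaring the extension constant on the regions beyond the vertices and constant along rays on the regions beyond the edges. A final structural point: your criterion requires the target to be Kan, so it cannot be applied to $S^\dcal_{\rm aff}(X)\to S^\dcal_{\rm sub}(X)$ directly; one must compare each complex with $S^\dcal(X)$ and conclude by two-out-of-three, which is how the paper organizes Parts I and II.
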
\noindent
The statement that $S^\dcal(X)$ is a fibrant approximation of $S^\dcal_{\rm sub}(X)$ was announced in \cite[Remark A.5]{origin}.\par
Next, we recall that $\pi_i(S^\dcal(X),x)$ is isomorphic to the smooth homotopy group $\pi_i^{\dcal}(X,x)$ (Theorem \ref{homotopygp}), and use Theorem \ref{thm1.1} to identify the homotopy groups of $S^\dcal_{\rm aff}(X)$ and $S^\dcal_{\rm sub}(X)$; see Section 4.4 for the homotopy groups of a simplicial set which need not be Kan. 
\begin{cor}\label{cor1.2}
 Let $(X,x)$ be a pointed diffeological space. Then, both $\pi_i(S^{\dcal}_{\mathrm{aff}}(X), x)$ and $\pi_i(S^{\dcal}_{\mathrm{sub}}(X),x)$ are naturally isomorphic to the smooth homotopy group $\pi_i^{\dcal}(X,x)$ for $i \geq 0$.
\end{cor}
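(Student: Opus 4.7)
The plan is to obtain this corollary as a direct formal consequence of Theorem~\ref{thm1.1} combined with the previously established identification $\pi_i(S^{\dcal}(X),x) \cong \pi_i^{\dcal}(X,x)$ of Theorem~\ref{homotopygp}. Because the substantive content has already been absorbed into Theorem~\ref{thm1.1}, no genuine obstacle remains at this stage; the only point requiring care is the definition of homotopy groups of a simplicial set that need not be Kan, recalled in Section~4.4.

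Concretely, I would first observe that the convention of Section~4.4 defines $\pi_i(K,x)$ via a fibrant approximation of $K$, so that any weak equivalence of simplicial sets induces natural isomorphisms on all homotopy groups. Applying this to the chain of natural weak equivalences
\[
S^{\dcal}_{\mathrm{aff}}(X) \longrightarrow S^{\dcal}_{\mathrm{sub}}(X) \longhookrightarrow S^{\dcal}(X)
\]
furnished by Theorem~\ref{thm1.1}, whose right-hand term is Kan and hence a fibrant approximation of each of the previous two, yields natural isomorphisms
\[
\pi_i(S^{\dcal}_{\mathrm{aff}}(X),x) \xrightarrow{\cong} \pi_i(S^{\dcal}_{\mathrm{sub}}(X),x) \xrightarrow{\cong} \pi_i(S^{\dcal}(X),x)
\]
for every $i \ge 0$.

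Composing these with the natural isomorphism $\pi_i(S^{\dcal}(X),x) \cong \pi_i^{\dcal}(X,x)$ delivered by Theorem~\ref{homotopygp} then produces the desired natural isomorphisms $\pi_i(S^{\dcal}_{\mathrm{aff}}(X),x) \cong \pi_i(S^{\dcal}_{\mathrm{sub}}(X),x) \cong \pi_i^{\dcal}(X,x)$, completing the argument. Naturality in $(X,x)$ is automatic, as every map entering the chain has been constructed naturally in $X$.
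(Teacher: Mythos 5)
Your proposal is correct and coincides with the paper's own argument: the paper likewise deduces the corollary directly from Theorem \ref{thm1.1} and Theorem \ref{homotopygp}, using the Section~4.4 convention that homotopy groups of a non-Kan simplicial set are computed via a fibrant approximation, for which the Kan complex $S^{\dcal}(X)$ serves. Nothing further is needed.
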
\noindent
Christensen and Wu showed that if $S^\dcal_{\rm aff}(X)$ is fibrant, then $\pi_i(S^\dcal_{\rm aff}(X), x)$ is isomorphic to the smooth homotopy group $\pi^\dcal_i(X,x)$ for $i\ge 0$ (\cite[Theorem 4.11]{CW}), and conjectured that for every diffeological space $X$, $\pi_i(S^\dcal_{\rm aff}(X),x)$ is isomorphic to $\pi_i^\dcal(X,x)$ for $i\ge 0$ (\cite[p. 1272]{CW}). Corollary \ref{cor1.2} contains their conjecture.

\subsubsection*{(Co)homology of diffeological spaces}
According to \cite[Section 3.1]{smh}, we define the homology $H_\ast(X;\: A)$ and the cohomology $H^\ast (X;\: A)$ of  a diffeological space $X$ with coefficients in an abelian group $A$ by
\begin{eqnarray*}
	H_\ast(X;\: A) = H_\ast(\zbb S^\dcal(X)\otimes A),\\
        H^\ast (X;\: A) = H^\ast {\rm Hom}(\zbb S^\dcal (X), A),
\end{eqnarray*}
where the simplicial abelian group $\zbb K$ freely generated by a simplicial set $K$ is regarded as a chain complex by setting $\partial=\sum(-1)^i d_i$ (see \cite[Section 3.1]{smh}). It follows from Theorem \ref{thm1.1} that the (co)homology of $X$ is naturally isomorphic to the (co)homologies defined using $S^\dcal_{\rm sub}(X)$ and $S^\dcal_{\rm aff}(X)$ instead of $S^\dcal(X)$. However, this fact is actually proved in Section 3.2 as a key to proving Theorem \ref{thm1.1}; the (co)homology of $X$ is also naturally isomorphic to the cubic (co)homology introduced in \cite[p. 176-186]{IZ} (Remark \ref{cubic}).

\subsection*{Application to diffeological principal bundles}

Let $G$ be a diffeological group. A $\dcal$-numerable principal $G$-bundle $\pi: P\to X$ is a principal $G$-bundle which admits a trivialization open cover $\{U_i\}$ of $X$ and a smooth partition of unity subordinate to it. On the other hand, Iglesias-Zemmour introduced a weaker notion of a principal $G$-bundle; such a principal $G$-bundle, referred to as a diffeological principal $G$-bundle, is defined by local triviality of the pullback along any plot (Definition \ref{diffbdle}(2)).

Though we mainly use the singular complexes $S^\dcal(X)$ in smooth homotopy theory, the singular complexes $S^\dcal_{\rm aff}(X)$, along with Theorem \ref{thm1.1} play an essential role in the study of diffeological principal bundles, as explained below.

\subsubsection*{Characterization of diffeological principal $G$-bundles}
Let $\ccal$ be a category with finite products, and $G$ a group in $\ccal$. Then, $\ccal G$ denotes the category of right $G$-objects of $\ccal$ (i.e., objects of $\ccal$ endowed with a right $G$-action). For $B\in \ccal$, $\ccal G/B$ denotes the category of objects of $\ccal G$ over $B$, where $B$ is regarded as an object of $\ccal G$ with trivial $G$-action.

Since $S^\dcal_{\rm aff}: \dcal \longrightarrow \scal$ is a right adjoint (see Remark \ref{extra}(1)), $S^\dcal_{\rm aff}$ induces the functor $\dcal G/ X$ to $\scal S^\dcal_{\rm aff}(G)/ S^\dcal_{\rm aff}(X)$. We then have the following characterization theorem for diffeological principal $G$-bundles (see Definition \ref{simplicialbdle} for the notion of a simplicial principal bundle).

\begin{thm}\label{bdletriviality}
	\begin{itemize}
		\item[{\rm (1)}] Let $\pi:P\longrightarrow X$ be an object of $\dcal G/X$. Then, $\pi:P\longrightarrow X$ is a diffeological principal $G$-bundle if and only if $S^\dcal_{\rm aff}(\pi): S^\dcal_{\rm aff} (P)\longrightarrow S^\dcal_{\rm aff}(X)$ is a principal $S^\dcal_{\rm aff}(G)$-bundle.
		\item[{\rm (2)}] The functor $S^\dcal_{\rm aff}:\dcal \longrightarrow \scal$ induces a faithful functor from the category $\mathsf{P}\dcal G_{\rm diff}$ of diffeological principal $G$-bundles to the category $\mathsf{P} \scal S^\dcal_{\rm aff}(G)$ of principal $S^\dcal_{\rm aff}(G)$-bundles.
	\end{itemize}
\end{thm}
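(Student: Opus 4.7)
The plan is to treat part (1) as the substantive content; part (2) will then follow from part (1) together with a simple observation. Part (1) will be reduced to the following key geometric lemma: for any diffeological principal $G$-bundle $\pi: P \longrightarrow X$ and any plot $\sigma: \abb^n \longrightarrow X$, the pullback $\sigma^*P \longrightarrow \abb^n$ is trivial (i.e., admits a smooth global section). I would establish this lemma by combining the local triviality of $\sigma^*P \longrightarrow \abb^n$, which is immediate from the definition of a diffeological principal bundle, with the smooth contractibility of $\abb^n \cong \rbb^n$ (via the smooth homotopy $(x,t) \longmapsto (1-t)x + t x_0$) and the smooth homotopy invariance of diffeological principal bundles in the sense of Iglesias-Zemmour, which identifies $\sigma^*P$ with the pullback along a constant map.

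Granted the key lemma, the ``only if'' direction of part (1) amounts to verifying Definition \ref{simplicialbdle} for $S^\dcal_{\mathrm{aff}}(\pi)$: the free $S^\dcal_{\mathrm{aff}}(G)$-action is inherited from the free $G$-action on $P$, while the required lifting of every simplex $\sigma: \abb^n \longrightarrow X$ to a plot $\tilde{\sigma}: \abb^n \longrightarrow P$ is precisely the content of the key lemma. Conversely, for the ``if'' direction, freeness of the $G$-action and the identification $P/G = X$ follow by restricting to $0$-simplices. For local triviality along a plot $p: U \longrightarrow X$ with $U \subset \rbb^n$ open and $u \in U$, I would choose a diffeomorphism $\alpha: \abb^n \longrightarrow V$ onto an open neighborhood $V \subset U$ of $u$; then $p\alpha \in S^\dcal_{\mathrm{aff}}(X)_n$ lifts to a plot $\abb^n \longrightarrow P$ by the principal bundle hypothesis, and transporting via $\alpha^{-1}$ yields a smooth section of $p^*P$ over $V$.

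For part (2), well-definedness of the induced functor is precisely the ``only if'' direction of part (1). Faithfulness reduces to the elementary observation that $\abb^0$ is a single point, so $S^\dcal_{\mathrm{aff}}(Y)_0 = Y$ as sets for every $Y \in \dcal$; hence any two smooth maps inducing the same simplicial morphism must agree on $0$-simplices and therefore coincide, and the same holds for bundle morphisms, which are just compatible pairs of smooth maps. The main obstacle is the key lemma, where smooth homotopy invariance for diffeological principal bundles is needed to upgrade the local triviality of $\sigma^*P \longrightarrow \abb^n$ into a global section; this delicate interaction between the diffeological bundle theory and the affine-simplex structure is what makes the singular functor $S^\dcal_{\mathrm{aff}}$ the natural tool for the characterization.
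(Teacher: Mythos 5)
Your overall architecture matches the paper's: the ``only if'' direction rests on the fact that the pullback of a diffeological principal $G$-bundle along a global plot $\abb^n\longrightarrow X$ is trivial in $\dcal G/\abb^n$ (the paper simply cites \cite[8.19]{IZ} for this), and faithfulness in (2) is exactly the observation that the $0$th component of $S^\dcal_{\rm aff}$ is the underlying set functor. Two caveats on your key lemma. First, ``smooth homotopy invariance of diffeological principal bundles'' is false in general --- the paper stresses precisely this failure when explaining why $\mathsf{P}\dcal G_{\rm diff}$ has no classifying space; what saves your argument is that $\sigma^{\ast}P\longrightarrow\abb^n$ is \emph{locally trivial} over the smoothly paracompact space $\abb^n\cong\rbb^n$, hence $\dcal$-numerable, so the Christensen--Wu homotopy invariance theorem for numerable bundles applies to it. Second, Definition \ref{simplicialbdle}(1) asks for an equivariant trivialization of $k^{\ast}S^\dcal_{\rm aff}(P)$ over $\Delta[p]$, not merely a lift of each simplex; this does follow from the equivariant triviality of $\sigma^{\ast}P$ by applying the right adjoint $S^\dcal_{\rm aff}$ and pasting pullback squares, which is how the paper argues.

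The genuine gap is in your ``if'' direction. A smooth section $s$ of $p^{\ast}P$ over $V\subset U$ yields a smooth equivariant \emph{bijection} $V\times G\longrightarrow p^{\ast}P|_{V}$, $(v,g)\mapsto s(v)\cdot g$, but in $\dcal$ a smooth bijection need not be a diffeomorphism: the inverse sends $u$ to $(\hat{\pi}(u),\tau(s(\hat{\pi}(u)),u))$, where $\tau: P\times_{X}P\longrightarrow G$ is the translation function, and the smoothness of $\tau$ is exactly what has to be proved. This is why the paper interposes Lemma \ref{translation}, whose condition (iii) is the smoothness of $\tau$, and why most of the ``if'' part of the paper's proof is devoted to verifying it: for two global plots $f_{1},f_{2}$ of $P$ lying over the same $\kappa$, the corresponding sections of the trivial principal $S^\dcal_{\rm aff}(G)$-bundle $k^{\ast}S^\dcal_{\rm aff}(P)\longrightarrow\Delta[p]$ differ by a unique $p$-simplex $g$ of $S^\dcal_{\rm aff}(G)$, i.e.\ a smooth map $\abb^{p}\longrightarrow G$, which shows that $\tau$ carries global plots to global plots. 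Without this step your argument only shows that $p^{\ast}P|_{V}$ receives a smooth equivariant bijection from $V\times G$, not that the pullback is a locally trivial principal $G$-bundle in $\dcal$.
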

The essential reason why $S^\dcal_{\rm aff}$ is useful in the study of diffeological principal $G$-bundles is because $S^\dcal_{\rm aff}(X)$ can be regarded as the set of global plots of $X$. We can use Theorem \ref{bdletriviality} to calculate the (co)homology of pathological diffeological spaces such as irrational tori and $\rbb/\qbb$ (see Section 2.3 and Example \ref{R/Q}); other cohomology theories of irrational tori were calculated by Iglesias-Zemmour and Kuribayashi (see Remark \ref{Tgamma}).

\subsubsection*{Characteristic classes of diffeological principal $G$-bundles}
We apply Theorem \ref{bdletriviality} to construct characteristic classes for diffeological principal $G$-bundles.

 A {\it characteristic class}  for a class $\pcal$ of smooth principal $G$-bundles is a rule assigning to a principal $G$-bundle $\pi: P\to X$ in $\pcal$ a cohomology class $\alpha(P)$ of $X$ such that $\alpha(f^{\ast} P) = f^{\ast} \alpha(P)$ holds. Christensen and Wu constructed the universal $\dcal$-numerable principal $G$-bundle $\pi_G: EG\to BG$ and proved that the set of isomorphism classes of $\dcal$-numerable principal $G$-bundles over $X$ bijectively corresponds to the smooth homotopy set $[X,BG]_\dcal$ (\cite[Theorem 5.10]{CW17}). Thus, a cohomology class $\alpha \in H^k(BG;A)$ defines the characteristic class $\alpha(\cdot)$ for the class of $\dcal$-numerable principal $G$-bundles. More precisely, the characteristic class $\alpha(P)\in H^k(X;A)$ of a $\dcal$-numerable principal $G$-bundle $\pi: P\to X$ is defined by
\[
\alpha(P) = f^\ast_P \alpha,
\]
where $f_P: X\longrightarrow BG$ is a classifying map of $P$.

We would like to extend the characteristic class $\alpha(\cdot)$ to the class of diffeological principal $G$-bundles. Since pullbacks of $EG$ are necessarily $\dcal$-numerable, the above definition of the characteristic class $\alpha(\cdot)$ does not apply to the class of diffeological principal $G$-bundles. Further, since the class of diffeological principal $G$-bundles does not have the homotopy invariance property with respect to pullback, it has no classifying space (see \cite[Section 3]{CW17}).

Nevertheless, we can prove the following result.
 \begin{prop}\label{cor1.3}
	Let $G$ be a diffeological group and $\alpha$ an element of $H^k(BG;A)$. Then, the characteristic class $\alpha(\cdot)$ for $\dcal$-numerable principal $G$-bundles extends to a characteristic class for diffeological principal $G$-bundles.
\end{prop}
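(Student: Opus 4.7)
The plan is to associate to each diffeological principal $G$-bundle $\pi:P\to X$ a morphism $\overline{f}_{P}:S^{\dcal}(X)\to S^{\dcal}(BG)$ in the homotopy category $\mathrm{Ho}(\scal)$, and then to set $\alpha(P)\coloneqq \overline{f}_{P}^{\ast}\alpha$, viewing $\alpha$ inside $H^{k}(S^{\dcal}(BG);A)$. For this to be a bona fide extension, the construction must be natural under smooth pullbacks and must agree with the existing characteristic class whenever $\pi$ is $\dcal$-numerable.

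By Theorem \ref{bdletriviality}(1), $S^{\dcal}_{\mathrm{aff}}(\pi):S^{\dcal}_{\mathrm{aff}}(P)\to S^{\dcal}_{\mathrm{aff}}(X)$ is a principal $S^{\dcal}_{\mathrm{aff}}(G)$-bundle in $\scal$, and the standard classifying theory of simplicial principal bundles over a simplicial group produces a classifying map $g_{P}:S^{\dcal}_{\mathrm{aff}}(X)\to \overline{W}S^{\dcal}_{\mathrm{aff}}(G)$, unique up to simplicial homotopy and natural up to homotopy under pullbacks. Apply the same construction to the Christensen--Wu universal $\dcal$-numerable bundle $\pi_{G}:EG\to BG$ to obtain $\phi:S^{\dcal}_{\mathrm{aff}}(BG)\to \overline{W}S^{\dcal}_{\mathrm{aff}}(G)$. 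The Christensen--Wu total space $EG$ is smoothly contractible, so Theorem \ref{homotopygp} gives $\pi_{i}^{\dcal}(EG)=0$, and Theorem \ref{thm1.1} then forces $\pi_{i}(S^{\dcal}_{\mathrm{aff}}(EG))=0$. Since $S^{\dcal}_{\mathrm{aff}}(EG)\to S^{\dcal}_{\mathrm{aff}}(BG)$ and $WS^{\dcal}_{\mathrm{aff}}(G)\to \overline{W}S^{\dcal}_{\mathrm{aff}}(G)$ are principal $S^{\dcal}_{\mathrm{aff}}(G)$-bundles (hence Kan fibrations) with weakly contractible total space, a comparison of their long exact sequences of homotopy groups shows that $\phi$ is a weak equivalence. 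Together with the weak equivalences $S^{\dcal}_{\mathrm{aff}}(X)\to S^{\dcal}(X)$ and $S^{\dcal}_{\mathrm{aff}}(BG)\to S^{\dcal}(BG)$ of Theorem \ref{thm1.1}, this yields the zig-zag
\[
S^{\dcal}(X)\;\xleftarrow{\ \simeq\ }\;S^{\dcal}_{\mathrm{aff}}(X)\;\xrightarrow{\ g_{P}\ }\;\overline{W}S^{\dcal}_{\mathrm{aff}}(G)\;\xleftarrow{\ \phi\ }\;S^{\dcal}_{\mathrm{aff}}(BG)\;\xrightarrow{\ \simeq\ }\;S^{\dcal}(BG),
\]
which inverts in $\mathrm{Ho}(\scal)$ to the desired $\overline{f}_{P}$.

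To check compatibility, suppose $\pi$ is $\dcal$-numerable with Christensen--Wu classifying map $f_{P}:X\to BG$, so that $P\cong f_{P}^{\ast}EG$. Since $S^{\dcal}_{\mathrm{aff}}$ is a right adjoint, it preserves the pullback, hence $S^{\dcal}_{\mathrm{aff}}(P)\cong S^{\dcal}_{\mathrm{aff}}(f_{P})^{\ast}S^{\dcal}_{\mathrm{aff}}(EG)$; by homotopy-uniqueness of classifying maps, $g_{P}\simeq \phi\circ S^{\dcal}_{\mathrm{aff}}(f_{P})$, and the zig-zag collapses to $\overline{f}_{P}=S^{\dcal}(f_{P})$ in $\mathrm{Ho}(\scal)$. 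Thus $\alpha(P)=f_{P}^{\ast}\alpha$, recovering the original definition. Naturality under a smooth map $h:Y\to X$ follows similarly: pulling $\pi$ back along $h$ pulls $g_{P}$ back along $S^{\dcal}_{\mathrm{aff}}(h)$, and the zig-zag commutes with $S^{\dcal}(h)^{\ast}$.

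The main obstacle is the weak equivalence $\phi:S^{\dcal}_{\mathrm{aff}}(BG)\to \overline{W}S^{\dcal}_{\mathrm{aff}}(G)$. This relies on smooth contractibility of Christensen--Wu's $EG$ (which has to be read off their Milnor-style construction in \cite{CW17}), together with the transfer of weak contractibility from $S^{\dcal}(EG)$ to $S^{\dcal}_{\mathrm{aff}}(EG)$ via Theorem \ref{thm1.1} and Corollary \ref{cor1.2}. One must also be careful that $S^{\dcal}_{\mathrm{aff}}(BG)$ is not a priori Kan, so the comparison of homotopy long exact sequences should be phrased at the level of the two principal $S^{\dcal}_{\mathrm{aff}}(G)$-bundles (which are automatically Kan fibrations) rather than of the base spaces. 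Everything else is a formal manipulation of the simplicial classifying construction.
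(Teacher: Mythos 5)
Your proposal is correct and rests on exactly the same pillars as the paper's proof: Theorem \ref{bdletriviality}(1) to turn a diffeological principal $G$-bundle into a simplicial principal $S^{\dcal}_{\rm aff}(G)$-bundle, smooth contractibility of the Christensen--Wu $EG$ combined with Theorems \ref{homotopygp} and \ref{thm1.1} to make $S^{\dcal}_{\rm aff}(EG)$ weakly contractible, the $W$-construction, and Corollary \ref{quism} to transport cohomology classes. The only genuine difference is organizational: you classify everything directly into $\overline{W}S^{\dcal}_{\rm aff}(G)$ and invert the comparison map $\phi\colon S^{\dcal}_{\rm aff}(BG)\to \overline{W}S^{\dcal}_{\rm aff}(G)$ in ${\rm Ho}(\scal)$, whereas the paper first extends $S^{\dcal}_{\rm aff}(\pi_G)$ over the fibrant approximation $S^{\dcal}_{\rm aff}(BG)\hspace{0.1em}\mathbf{\hat{}}$ (as in the proof of Proposition \ref{classifyingsp}) and then invokes Lemma \ref{universalpb}, so that the classifying complex it actually uses is $S^{\dcal}_{\rm aff}(BG)\hspace{0.1em}\mathbf{\hat{}}$ rather than $\overline{W}S^{\dcal}_{\rm aff}(G)$. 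The paper's ordering has the advantage that every long-exact-sequence comparison takes place over Kan bases, which is precisely the step you leave glossed: your assertion that $\phi$ is a weak equivalence ``by a comparison of long exact sequences'' is not literally available when $S^{\dcal}_{\rm aff}(BG)$ fails to be Kan, since the combinatorial homotopy LES of a Kan fibration presupposes a fibrant base. This is fixable in several standard ways --- geometrically realize (realizations of Kan fibrations are Serre fibrations, so the LES comparison goes through for $|\phi|$), or use right properness of $\scal$ to see that both bundles are homotopy fiber sequences with the same fiber and weakly contractible total spaces, or simply adopt the paper's device of extending over $S^{\dcal}_{\rm aff}(BG)\hspace{0.1em}\mathbf{\hat{}}$ first --- so it is a gap in exposition rather than in substance. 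Your verification that the zig-zag collapses to $S^{\dcal}(f_P)$ in the $\dcal$-numerable case, via uniqueness of classifying maps and naturality of $S^{\dcal}_{\rm aff}\to S^{\dcal}$, matches what the paper asserts without detail and is a welcome addition.
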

This paper is organized as follows. In Section 2, we recall the basic notions and results on diffeological spaces and the singular functor $S^\dcal$. In Section 3, we make a brief review on the singular functors $S^\dcal_{\rm sub}$ and $S^\dcal_{\rm aff}$, and show that there exist natural morphisms between $S^\dcal_{\rm aff}(X)$, $S^\dcal_{\rm sub}(X)$, and $S^\dcal(X)$ which induce isomorphisms on (co)homology. Then, we prove Theorem \ref{thm1.1} and Corollary \ref{cor1.2} in Section 4. In Section 5, we recall the notions of a diffeological principal bundle and a simplicial principal bundle, and prove Theorem \ref{bdletriviality}. In Section 6, we prove Proposition \ref{cor1.3} and discuss the sets of characteristic classes for the three classes $\mathsf{P}\dcal G$, $\mathsf{P}\dcal G_{\rm num}$, and $\mathsf{P}\dcal G_{\rm diff}$ of smooth principal $G$-bundles (see Definition \ref{diffbdle}(3) for these three classes).

\section{Diffeological spaces}
In this section, we first recall the convenient properties of the category $\dcal$ of diffeological spaces, along with the adjoint pair $\widetilde{\cdot}: \dcal \rightleftarrows \czero: R$ of the underlying topological space functor and its right adjoint (Section 2.1). Then, we recall the standard simplices $\Delta^p$ $(p\ge 0)$ and the adjoint pair $|\ |_\dcal: \scal \rightleftarrows \dcal:S^\dcal$ of the realization and singular functors (see Section 2.2). Last, we make a brief review of some results of \cite{smh}, in which the adjoint pairs $(\widetilde{\cdot}, R)$ and $(|\ |_\dcal, S^\dcal)$ play an essential role (Section 2.3).

\subsection{Categories $\dcal$ and $\czero$}
In this subsection, we summarize the convenient properties of the category $\dcal$ of diffeological spaces, recalling the adjoint pair $\widetilde{\cdot}: \dcal \rightleftarrows \czero: R$ of the underlying topological space functor and its right adjoint; see \cite{IZ} and \cite{origin} for full details.
\par\indent
Let us begin with the definition of a diffeological space. A {\sl parametrization} of a set $X$ is a (set-theoretic) map $p: U \longrightarrow X$, where $U$ is an open subset of $\rbb^{n}$ for some $n$.
\begin{defn}\label{diffeological}
	\begin{itemize}
		\item[(1)] A {\sl diffeological space} is a set $X$ together with a specified set $D_X$ of parametrizations of $X$ satisfying the following conditions:
		\begin{itemize}
			\item[(i)](Covering)  Every constant parametrization $p:U\longrightarrow X$ is in $D_X$.
			\item[(ii)](Locality) Let $p :U\longrightarrow X$ be a parametrization such that there exists an open cover $\{U_i\}$ of $U$ satisfying $p|_{U_i}\in D_X$. Then, $p$ is in $D_X$.
			\item[(iii)](Smooth compatibility) Let $p:U\longrightarrow X$ be in $D_X$. Then, for every $n \geq 0$, every open set $V$ of $\rbb^{n}$ and every smooth map $F  :V\longrightarrow U$, $p\circ F$ is in $D_X$.
		\end{itemize}
		The set $D_X$ is called the {\sl diffeology} of $X$, and its elements are called {\sl plots}.
		\item[(2)] Let $X=(X,D_X)$ and $Y=(Y,D_Y)$ be diffeological spaces, and let $f  :X\longrightarrow Y$ be a (set-theoretic) map. We say that $f$ is {\sl smooth} if for any $p\in D_X$, \ $f\circ p\in D_Y$.
	\end{itemize}
\end{defn}
The convenient properties of $\dcal$ are summarized in the following proposition. Recall that a topological space $X$ is called {\sl arc-generated} if its topology is final for the continuous curves from $\rbb$ to $X$, and let $\czero$ denote the category of arc-generated spaces and continuous maps. See \cite[pp. 230-233]{FK} for initial and final structures with respect to the underlying set functor.
\begin{prop}\label{conven}
	\begin{itemize}
		\item[$(1)$] The category ${\dcal}$ has initial and final structures with respect to the underlying set functor. In particular, ${\dcal}$ is complete and cocomplete.
		\item[$(2)$] The category $\mathcal{D}$ is cartesian closed.
		\item[$(3)$] The underlying set functor $\dcal \longrightarrow Set$
		is factored as the underlying topological space functor
		$\widetilde{\cdot}:\dcal \longrightarrow \czero$
		followed by the underlying set functor
		$\czero \longrightarrow Set$.
		Further, the functor
		$\widetilde{\cdot}:\dcal \longrightarrow \czero$
		has a right adjoint
		$R:\czero \longrightarrow \dcal$.
	\end{itemize}
	\begin{proof}
		See \cite[p. 90]{CSW}, \cite[pp. 35-36]{IZ}, and \cite[Propositions 2.1 and 2.10]{origin}.
	\end{proof}
\end{prop}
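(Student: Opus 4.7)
The plan is to verify each of the three claims directly from Definition 2.1, following the standard recipe for initial/final structures over $Set$ and then deducing the categorical consequences.

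For (1), I would describe the initial diffeology on a set $X$ with respect to a source $(f_i:X\to X_i)_{i\in I}$ of maps into diffeological spaces as the set $D_X$ of parametrizations $p:U\to X$ such that $f_i\circ p\in D_{X_i}$ for every $i$; the three axioms (covering, locality, smooth compatibility) follow immediately from the corresponding axioms for each $D_{X_i}$. Dually, the final diffeology for a sink $(g_j:X_j\to X)_{j\in J}$ consists of the parametrizations $p:U\to X$ that are locally either constant or of the form $g_j\circ q$ with $q\in D_{X_j}$; adding the constants ensures the covering axiom, and locality and smooth compatibility are then automatic. Completeness and cocompleteness of $\dcal$ follow from the general principle that any concrete category admitting both initial and final lifts along the underlying-set functor is complete and cocomplete, with (co)limits computed on underlying sets.

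For (2), I would equip $X\times Y$ with the initial (product) diffeology provided by (1), and define a diffeology on the set of smooth maps $C^\infty(X,Y)$ by declaring $p:U\to C^\infty(X,Y)$ to be a plot iff its adjoint $\hat p:U\times X\to Y$, $(u,x)\mapsto p(u)(x)$, is smooth. The axioms again reduce to the local nature of smoothness and its closure under smooth precomposition. The exponential bijection $C^\infty(X\times Y,Z)\cong C^\infty(X,C^\infty(Y,Z))$ is the set-theoretic currying bijection, and checking that it restricts correctly amounts to unravelling the definitions of the product and function-space diffeologies; naturality in all three variables is then automatic.

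For (3), I would let $\widetilde{X}$ be the set $X$ topologised with the final topology with respect to the family of all plots $p:U\to X$ (regarded as maps from the standard topology on the open sets $U\subseteq\rbb^{n}$). Smoothness of $f:X\to Y$ implies continuity of $\widetilde{f}:\widetilde{X}\to\widetilde{Y}$, giving the functor $\widetilde{\cdot}$. The right adjoint $R:\czero\to\dcal$ sends an arc-generated space $Y$ to the set $Y$ endowed with the continuous diffeology, whose plots are all continuous parametrizations into $Y$; the three axioms are immediate from the local nature of continuity. The adjunction identity $\dcal(X,R(Y))\cong\czero(\widetilde{X},Y)$ then reduces to the tautology that a set-theoretic map $f:X\to Y$ sends every plot of $X$ to a continuous parametrization iff $\widetilde{f}:\widetilde{X}\to Y$ is continuous, which is precisely the universal property of the final topology on $\widetilde{X}$.

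The main (and essentially the only non-formal) obstacle is checking that $\widetilde{X}$ actually lies in $\czero$, i.e.\ that the final topology with respect to all plots coincides with the final topology with respect to the smooth curves $\rbb\to X$. This reduces to showing that a subset $A\subseteq X$ whose preimage under every smooth curve is open already has $p^{-1}(A)$ open for every plot $p:U\to X$, which one obtains by testing $p^{-1}(A)$ along smooth curves in the open set $U\subseteq\rbb^{n}$ and invoking arc-generatedness of $U$ itself.
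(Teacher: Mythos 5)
Your proposal is correct: the paper itself proves this proposition only by citation to \cite{CSW}, \cite{IZ}, and \cite{origin}, and the constructions you give (initial/final diffeologies, product and functional diffeologies, the $D$-topology and the continuous diffeology) are exactly the standard arguments contained in those references. You also correctly identify and handle the one genuinely non-formal point, namely that the final topology for all plots is already arc-generated, which is the content of \cite[Theorem 3.7]{CSW} and is proved there by the same reduction to smooth curves in the domains $U\subseteq\rbb^n$ that you sketch.
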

The following remark relates to Proposition \ref{conven}.
\begin{rem}\label{convenrem}
	\begin{itemize}
		\item[${\rm (1)}$] Let $\xfra$ be a concrete category (i.e., a category equipped with a faithful functor to $Set$); the faithful functor $\xfra \longrightarrow Set$ is called the underlying set functor. See \cite[Section 8.8]{FK} for the notions of an $\xfra$-embedding, an $\xfra$-subspace, an $\xfra$-quotient map, and an $\xfra$-quotient space. $\dcal$-subspaces and $\dcal$-quotient spaces are usually called diffeological subspaces and quotient diffeological spaces, respectively.
		\item[${\rm (2)}$] For Proposition \ref{conven}(3), recall that the underlying topological space $\tilde{A}$ of a diffeological space $A = (A, D_A)$ is defined to be the set $A$ endowed with the final topology for $D_A$ and that $R$ assigns to an arc-generated space $X$ the set $X$ endowed with the diffeology
		$$
		D_{RX} = \text{\{continuous parametrizations of } X \text{\}.}
		$$
		Then, we can easily see that $\tilde{\cdot} \circ R = Id_{\ccal^0}$ and that the unit $A \longrightarrow R\widetilde{A}$ of the adjoint pair $(\widetilde{\cdot}, R)$ is set-theoretically the identity map.
		\item[${\rm (3)}$] The notion of an arc-generated space is equivalent to that of a $\Delta$-generated space (see \cite{CSW}, \cite[Section 2.2]{origin}). The categories $\dcal$ and $\czero$ share convenient properties (1) and (2) in Proposition \ref{conven}, which often enables us to deal with $\dcal$ and $\czero$ simultaneously (see \cite{smh}). See \cite[Remark 2.4]{smh} for the reason why $\czero$ is the most suitable category as a target category of the underlying topological space functor for diffeological spaces.
	\end{itemize}
\end{rem}
\subsection{Standard simplices $\Delta^p$}
In this subsection, we recall the standard simplices $\Delta^p$ $(p\ge 0)$, along with the adjoint pair $|\ |_\dcal: \scal \rightleftarrows \dcal:S^\dcal$ of the realization and singular functors.

In \cite{origin}, we introduced a model structure on the category $\dcal$. The principal part of our construction of a model structure on $\dcal$ is the construction of good diffeologies on the sets
\[
	\Delta^p = \{(x_0,\ldots, x_p)\in \rbb^{p+1} \:|\: \sum x_i =1, \ x_i \ge 0 \text{ for any } i\} \ \ \ (p \ge 0)
\]
which enable us to define weak equivalences, fibrations, and cofibrations and to verify the model axioms (see Remark \ref{modelstr}). The required properties of the diffeologies on $\Delta^{p} \ (p \geq 0)$ are expressed in the following four axioms:
\begin{axiom}
	The underlying topological space of $\Delta^p$ is the topological standard $p$-simplex $\Delta^{p}_{\rm top}$ for $p\geq 0$.
\end{axiom}
Recall that
$f:\Delta^p \longrightarrow \Delta^q$
is an {\sl affine map} if $f$ preserves convex combinations.
\begin{axiom}
	Any affine map $f:\Delta^p\longrightarrow \Delta^q$ is smooth.
\end{axiom}
For $K \in \scal$, the {\sl simplex category} $\Delta\downarrow K$ is defined to be the full subcategory of the overcategory $\scal \downarrow K$ consisting of maps $\sigma : \Delta[n] \rightarrow K$. By Axiom 2, we can consider the diagram $\Delta\downarrow K \longrightarrow \mathcal{D}$ sending $\sigma :\Delta[n] \longrightarrow K$ to $\Delta^n$. Thus, we define the {\sl realization functor}
$$
|\ |_{\dcal}: \mathcal{S}\longrightarrow \mathcal{D}
$$
by $|K|_{\mathcal{D}}= \underset{\mathrm{\Delta\downarrow} K}{\mathrm{colim}} \ \Delta^n$.
\par\indent
Consider the smooth map $|\dot{\Delta}[p]|_{\dcal} \longhookrightarrow |\Delta[p]|_{\dcal} = \Delta^{p}$ induced by the inclusion of the boundary $\dot{\Delta}[p]$ into $\Delta[p]$.
\begin{axiom}
	The canonical smooth injection
	$$\left| \dot{\Delta}[p] \right|_{\dcal} \longhookrightarrow \Delta^p$$
	is a $\dcal$-embedding.
\end{axiom}
The $\dcal$-homotopical notions, especially the notion of a $\dcal$-deformation retract, are defined in the same manner as in the category of topological spaces by using the unit interval $I=[0,1]$ endowed with a diffeology via the canonical bijection with $\Delta^{1}$ (\cite[Section 2.4]{origin}).
The {\sl $k^{th}$ horn} of $\Delta^p$ is a diffeological subspace of $\Delta^p$ defined by
\begin{equation*}
\Lambda^{p}_{k} =  \{(x_0,\ldots,x_p)\in\Delta^p \ |\ x_i=0 \hbox{ for some }i\neq k\}.
\end{equation*}
\begin{axiom}
	The $k^{th}$ horn $\Lambda^p_k$ is a $\dcal$-deformation retract of $\Delta^p$ for $p \geq 1$ and $0 \leq k \leq p$.
\end{axiom}
For a subset $A$ of the affine $p$-space $\abb^p = \{(x_0, \ldots, x_p) \in \rbb^{p+1}\ |\ \sum x_i = 1 \}$, $A_{\rm sub}$ denotes the set $A$ endowed with the sub-diffeology of $\abb^p$ (and hence of $\rbb^{p+1}$). The diffeological spaces $\Delta^p_{\rm sub}$ $(p\ge0)$ satisfy Axioms 1-2, but $\Delta^{p}_{\mathrm{sub}}$ satisfies neither Axiom 3 nor 4 for $p \geq 2$ (\cite[Proposition A.2]{origin}). Thus, we must construct a new diffeology on $\Delta^p$, at least for $p \geq 2$.
\par\indent
Let $(i)$ denote the vertex $(0, \ldots, \underset{(i)}{1}, \ldots, 0)$ of $\Delta^p$, and let $d^i$ denote the affine map from $\Delta^{p-1}$ to $\Delta^p$, defined by
\begin{equation*}
d^i((k))= \left \{
\begin{array}{ll}
(k) & \text{for} \ k<i,\\
(k+1)& \text{for} \ k\geq i.
\end{array}
\right.
\end{equation*}

\begin{defn}\label{simplices}
	We define the {\sl standard $p$-simplices} $\Delta^p$ ($p\geq 0$) inductively. Set $\Delta^p=\Delta_{\mathrm{sub}}^p$ for $p\leq 1$. Suppose that the diffeologies on $\Delta^k$ ($k<p$) are defined.
	We define the map
	\begin{eqnarray*}
		\varphi_i: \Delta^{p-1}\times [0,1) & \longrightarrow &  \Delta^p
	\end{eqnarray*}
	by $\varphi_{i}(x, t) = (1-t)(i)+td^{i}(x)$, and endow $\Delta^p$ with the final structure for the maps $\varphi_{0}, \ldots, \varphi_{p}$.
\end{defn}
The following result is established in \cite[Propositions 3.2, 5.1, 7.1, and 8.1]{origin}.
\begin{prop}\label{axioms}
	The standard $p$-simplices $\Delta^p\ (p \geq 0)$ in Definition \ref{simplices} satisfy Axioms 1-4.
\end{prop}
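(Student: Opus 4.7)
The plan is to proceed by induction on $p$. The base cases $p=0,1$ follow directly from the sub-diffeology: for $p=0$ everything is vacuous or immediate, and for $p=1$ the identification $\Delta^1 = \Delta^1_{\mathrm{sub}}$ makes Axioms 1--2 standard, while Axiom 3 reduces to the fact that the two endpoints form a discrete subspace and Axiom 4 is witnessed by a smooth retraction $[0,1]\to\{k\}$. For the inductive step $p\ge 2$, I assume the axioms hold for all $\Delta^k$ with $k<p$, and use throughout that a parametrization into $\Delta^p$ is a plot iff locally it factors as $\varphi_i\circ q$ for some plot $q$ of $\Delta^{p-1}\times[0,1)$, together with the observation that the images of $\varphi_0,\ldots,\varphi_p$ cover $\Delta^p$ (each $\varphi_i$ hitting precisely the set where the $i$-th barycentric coordinate is positive, plus the vertex $(i)$). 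Axiom 1 is then immediate: the final topology on $\Delta^p$ determined by the continuous maps $\tilde\varphi_i$ agrees with $\Delta^p_{\mathrm{top}}$ because the $\tilde\varphi_i$ parametrize precisely these open cones in the standard Euclidean topology.

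For Axiom 2, given an affine $f:\Delta^p\to\Delta^q$, one computes $(f\circ\varphi_i)(x,t)=(1-t)f((i))+t(f\circ d^i)(x)$, where $f\circ d^i:\Delta^{p-1}\to\Delta^q$ is affine and hence smooth by induction. The remaining and main technical point is a \emph{smooth convexity} lemma for $\Delta^q$: for any plot $g:V\to\Delta^q$ and any point $a\in\Delta^q$, the map $\psi:(v,t)\mapsto (1-t)a+tg(v)$ is a plot of $\Delta^q$. I would establish this by a local-lifting argument: cover $V\times[0,1)$ by open sets on each of which some fixed $j$-th barycentric coordinate of $\psi$ stays positive, and on each such piece construct a lift through $\varphi_j^{(q)}$ by inverting the affine expression and applying the inductive Axiom 3 to land on $\Delta^{q-1}$. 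This is the main obstacle, since the inductively defined diffeology on $\Delta^q$ is strictly coarser than the sub-diffeology of $\rbb^{q+1}$, so merely checking smoothness into $\rbb^{q+1}$ will not suffice; one has to produce an explicit local lift through a generating $\varphi_j^{(q)}$.

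For Axiom 3, the space $|\dot{\Delta}[p]|_\dcal$ is the colimit of $p+1$ copies of $\Delta^{p-1}$ glued along common faces, with its universal map to $\Delta^p$ a bijection onto the topological boundary. To show this map is a $\dcal$-embedding, take a parametrization whose composite with the inclusion is a plot of $\Delta^p$, locally factor it through some $\varphi_i$, and use the inductive embedding property of $|\dot{\Delta}[p-1]|_\dcal\hookrightarrow\Delta^{p-1}$ to lift it to a plot of the appropriate face. For Axiom 4, I would construct the smooth deformation retract $h:\Delta^p\times I\to\Delta^p$ onto $\Lambda^p_k$ explicitly via a radial projection away from the opposite vertex $(k)$ followed by a damped interpolation toward $\Lambda^p_k$, and verify smoothness by pulling back through each $\varphi_j$ and invoking Axioms 2 and 4 at level $p-1$ together with the smooth convexity lemma from the previous paragraph.
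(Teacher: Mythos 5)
The paper does not prove this proposition here; its ``proof'' is a citation to \cite[Propositions 3.2, 5.1, 7.1, and 8.1]{origin}, where each axiom gets a separate, fairly long argument. Your overall strategy --- induction on $p$, using that a plot of $\Delta^p$ locally factors through some $\varphi_i$ --- is indeed the strategy of those proofs, and your treatments of Axioms 1 and 3 are plausible in outline. But two steps as written have genuine gaps. For Axiom 2, your ``smooth convexity lemma'' is the real content, and the proposed proof is circular: inverting $\varphi_j$ requires showing that $(v,t)\mapsto \bigl(\psi_k(v,t)/(1-\psi_j(v,t))\bigr)_{k\neq j}$ is a plot of $\Delta^{q-1}$, i.e.\ that the radial projection from the vertex $(j)$ is smooth for the inductively defined diffeologies. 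Axiom 3 (the boundary inclusion being a $\dcal$-embedding) does not give you this; it is a separate inductive statement of essentially the same difficulty as the lemma itself, and it is exactly the kind of auxiliary result that \cite{origin} has to establish on the way to Proposition 5.1.

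The more serious failure is Axiom 4. The naive radial projection of $\Delta^p$ onto $\Lambda^p_k$ is \emph{not} smooth for these diffeologies: the interior of $\Delta^p$ carries the standard diffeology, and a smooth interior curve crossing the ``crease'' of the radial projection transversally is sent to a path that enters the vertex along one face of the horn and leaves along another with distinct nonzero velocities; such a cornered path cannot be lifted smoothly through any $\varphi_i$, so the composite is not a plot. No amount of ``damped interpolation toward $\Lambda^p_k$'' in the homotopy parameter repairs this, because the defect is in the retraction map itself near the $(p-2)$-skeleton. The fix in \cite[Sections 7--8]{origin} is to precompose with self-maps of $\Delta^p$ that are locally constant (respectively, constant along rays) near the skeleton --- the maps $\psi^p_k$ recalled in Section 4.3 of the present paper are exactly of this type --- and this construction, together with the verification that the resulting homotopy is smooth with respect to the final structure, is the bulk of the original proof. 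As it stands, your Axiom 4 argument proposes a map that is not smooth.
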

Without explicit mention, the symbol $\Delta^p$ denotes the standard $p$-simplex defined in Definition \ref{simplices} and a subset of $\Delta^p$ is endowed with the sub-diffeology of $\Delta^p$. Since the diffeology of $\Delta^p$ is the sub-diffeology of $\abb^{p}$ for $p \leq 1$, the $\dcal$-homotopical notions, especially the notion of a $\dcal$-deformation retract, coincide with the ordinary smooth homotopical notions in the theory of diffeological spaces (see \cite[p. 108]{IZ} and \cite[Remark 2.14]{origin}).

Since $\Delta^\bullet = \{\Delta^p\}$ is a cosimplicial diffeological space by Axiom 2, the singular complex $S^\dcal(X)$ is defined by
\[
S^\dcal(X) = \dcal(\Delta^\bullet, X).
\]
We can easily see that  $|\ |_{\dcal}: \scal \rightleftarrows \dcal: S^{\dcal}$ is an adjoint pair (\cite[Proposition 9.1]{origin}). Further, we can derive the following result from Proposition \ref{axioms}.

\begin{cor}\label{lbryhorn}
	\begin{itemize}
		\item[{\rm (1)}] The natural isomorphisms
		\[
		|\Delta[p]|_\dcal = \Delta^p, \ |\dot{\Delta}[p]|=\dot{\Delta}^p,\ and \ |\Lambda_k [p]|_\dcal = \Lambda^p_k
		\]
		exist.
		\item[{\rm (2)}] $S^\dcal X$ is a Kan complex for any diffeological space $X$.
	\end{itemize}
	\begin{proof}
		\begin{itemize}
			\item[{\rm (1)}] See \cite[Proposition 9.2]{origin}.
			\item[{\rm (2)}] See \cite[Lemma 9.4(1)]{origin}.\qedhere
		\end{itemize}
	\end{proof}
\end{cor}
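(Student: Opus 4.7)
The plan is to derive both parts from Axioms~1--4 (Proposition~\ref{axioms}) together with the formal properties of the adjunction $|\cdot|_\dcal \dashv S^\dcal$.

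For part~(1), the identification $|\Delta[p]|_\dcal = \Delta^p$ is immediate from the definition of the realization: the simplex category $\Delta\downarrow\Delta[p]$ has a terminal object $\mathrm{id}_{\Delta[p]}$, so the colimit collapses to the value at that object. For the boundary, present $\dot\Delta[p]$ as the standard coequalizer in $\scal$
\[
\coprod_{0\le i<j\le p}\Delta[p-2] \rightrightarrows \coprod_{0\le i\le p}\Delta[p-1] \longrightarrow \dot\Delta[p],
\]
where the two parallel arrows encode the simplicial face identities. Applying the left adjoint $|\cdot|_\dcal$ yields the corresponding coequalizer in $\dcal$, and the canonical smooth map $|\dot\Delta[p]|_\dcal \to |\Delta[p]|_\dcal = \Delta^p$ is set-theoretically a bijection onto $\dot\Delta^p$. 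Axiom~3 asserts precisely that this map is a $\dcal$-embedding, so the colimit diffeology coincides with the sub-diffeology inherited from $\Delta^p$. The case of the horn is analogous: $|\Lambda_k[p]|_\dcal$ is the colimit of the $p$ faces indexed by $i\neq k$ glued along their pairwise intersections, the canonical map to $\Delta^p$ is a smooth bijection onto $\Lambda^p_k$, and the embedding statement follows either by restricting the embedding of the boundary case along the subcomplex inclusion $\Lambda_k[p]\hookrightarrow\dot\Delta[p]$, or by rerunning the gluing argument underlying Axiom~3 for the smaller diagram.

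Part~(2) is then a short formal consequence. Given a horn $f\colon \Lambda_k[p]\to S^\dcal X$, the adjunction converts $f$ into a smooth map $\widetilde f\colon |\Lambda_k[p]|_\dcal \to X$, which by part~(1) is a smooth map $\Lambda^p_k\to X$. Axiom~4 supplies a smooth retraction $r\colon \Delta^p\to \Lambda^p_k$, and the composite $\widetilde f\circ r\colon\Delta^p\to X$ corresponds under the adjunction (via $|\Delta[p]|_\dcal = \Delta^p$) to a morphism $\Delta[p]\to S^\dcal X$ extending $f$. Hence every horn admits a filler, and $S^\dcal X$ is Kan.

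The main obstacle lies entirely in part~(1): showing that the colimit diffeologies on $|\dot\Delta[p]|_\dcal$ and $|\Lambda_k[p]|_\dcal$ really are the sub-diffeologies of $\Delta^p$, not merely coarser diffeologies with the same underlying set. Surjectivity of the colimit onto the expected subset and smoothness of the canonical map are both automatic from the universal property; the nontrivial content is the embedding direction, which rests on Axiom~3 (plus a small amount of bookkeeping for the horn). Once part~(1) is in hand, part~(2) is purely formal: it simply combines the adjunction with the smooth retraction supplied by Axiom~4.
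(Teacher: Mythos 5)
Your argument is correct and is essentially the proof of the two cited results from the author's earlier paper: part (1) comes down to the colimit description of $|\cdot|_\dcal$ together with Axiom 3, and part (2) is the formal consequence of the adjunction $|\cdot|_\dcal \dashv S^\dcal$ and the smooth retraction $\Delta^p\to\Lambda^p_k$ supplied by Axiom 4. The one soft spot is the horn case of (1): merely restricting the boundary embedding along $\Lambda_k[p]\hookrightarrow\dot{\Delta}[p]$ does not suffice, because one must also know that $|\Lambda_k[p]|_\dcal\to|\dot{\Delta}[p]|_\dcal$ is itself a $\dcal$-embedding (the realization of a monomorphism need not be one a priori --- that is why Axiom 3 is an axiom), so it is your second alternative, rerunning the gluing analysis for the smaller diagram, that actually closes the argument and is where the cited proof does its real work.
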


See \cite[Section 3.1]{CW} or \cite[Chapter 5]{IZ} for the smooth homotopy groups $\pi^{\dcal}_{p}(X, x)$ of a pointed diffeological space $(X, x)$. Note that $S^{\dcal}X$ is always a Kan complex (Corollary \ref{lbryhorn}(2)) and see \cite[p. 25]{GJ} for the homotopy groups $\pi_p(K, x)$ of a pointed Kan complex $(K, x)$.
\begin{thm}\label{homotopygp}
	Let $(X, x)$ be a pointed diffeological space. Then, there exists a natural bijection
	$$
	\varTheta_{X} : \pi^{\dcal}_{p}(X, x) \longrightarrow \pi_{p}(S^{\dcal}X, x) \ \  \text{for} \ \ p \geq 0,
	$$
	that is an isomorphism of groups for $p > 0$.
	\begin{proof}
		See \cite[Theorem 1.4]{origin}.
	\end{proof}
\end{thm}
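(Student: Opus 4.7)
The plan is to reduce the statement to a comparison of equivalence relations on the same set of smooth maps, by exploiting the adjunction $|\cdot|_\dcal \dashv S^\dcal$ together with the identifications in Corollary \ref{lbryhorn}.

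First, by adjunction and Corollary \ref{lbryhorn}(1), the $p$-simplices of $S^\dcal X$ are smooth maps $\Delta^p \to X$. Because $\dot{\Delta}^p \hookrightarrow \Delta^p$ is a $\dcal$-embedding (Axiom 3), those whose simplicial boundary collapses to the constant $x$ correspond exactly to smooth maps $(\Delta^p, \dot{\Delta}^p) \to (X, x)$. Since $S^\dcal X$ is Kan (Corollary \ref{lbryhorn}(2)), $\pi_p(S^\dcal X, x)$ is represented by such simplices modulo simplicial homotopy rel $\dot{\Delta}[p]$. A simplicial homotopy is, again by adjunction and the identification $|\Delta[1]|_\dcal = \Delta^1 = I$, the same as a smooth homotopy $\Delta^p \times I \to X$ that is constant on $\dot{\Delta}^p$. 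After the standard reparametrization identifying $\Delta^p/\dot{\Delta}^p$ with the $p$-sphere model underlying the definition of $\pi_p^\dcal$ in \cite{CW}, the underlying set and the equivalence relation coincide with those defining $\pi_p^\dcal(X, x)$, so $\varTheta_X$ is a well-defined natural bijection (the case $p = 0$ is the same argument, with both sides equal to the set of smooth path-components).

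For $p \geq 1$, one must match the group structures. The simplicial product $[\alpha] \cdot [\beta]$ in $\pi_p(S^\dcal X, x)$ is obtained by filling the horn $\Lambda^{p+1}_1 \to S^\dcal X$ whose $0$th and $2$nd faces are $\alpha$ and $\beta$, with all other faces constant at $x$, and then reading off its $1$st face. By Axiom 4, $\Lambda^{p+1}_1$ is a $\dcal$-deformation retract of $\Delta^{p+1}$, which supplies the desired smooth extension; its $1$st face agrees, up to smooth homotopy rel $\dot{\Delta}^p$, with the geometric concatenation of $\alpha$ and $\beta$. Hence $\varTheta_X$ intertwines the two group laws.

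The main obstacle is precisely this last identification: showing that the horn filling produced by Axiom 4 yields exactly the classical concatenation of $\alpha$ and $\beta$ on the $1$st face. Axiom 4 abstractly provides only a smooth retract $r : \Delta^{p+1} \to \Lambda^{p+1}_1$; to extract the multiplication concretely, one wants a geometric model — for instance, a smooth prism-like decomposition of $\Delta^{p+1}$ in which two of the horn faces play the roles of ``first half'' and ``second half'' of the concatenation. The inductive construction of the diffeology on $\Delta^p$ in Definition \ref{simplices} via the maps $\varphi_i$ is tailored to make such an explicit smooth decomposition available, and this step is the technical heart of the comparison.
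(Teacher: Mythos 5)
The paper does not actually prove this statement here: it is quoted verbatim from the author's earlier work and the ``proof'' is the citation to \cite[Theorem 1.4]{origin}, where it is one of the main theorems and its proof occupies a substantial portion of that paper. So the relevant comparison is between your sketch and that (external) argument. Your overall strategy --- identify $p$-simplices of $S^\dcal X$ with smooth maps $(\Delta^p,\dot{\Delta}^p)\to(X,x)$ via the adjunction and Axiom 3, use Kan-ness to describe $\pi_p(S^\dcal X,x)$ combinatorially, and then match equivalence relations and group laws --- is indeed the right general shape. But the three places where you appeal to ``standard'' identifications are exactly where the content of \cite[Theorem 1.4]{origin} lives, and as written each is a genuine gap.

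Concretely: (a) ``a simplicial homotopy is, by adjunction, a smooth homotopy $\Delta^p\times I\to X$'' is not automatic, because $|\cdot|_\dcal$ is a colimit construction and is not known to send $\Delta[p]\times\Delta[1]$ to $\Delta^p\times\Delta^1$; the combinatorial notion of homotopy rel $\dot{\Delta}[p]$ in a Kan complex instead hands you $(p+1)$-simplices, i.e.\ smooth maps out of $\Delta^{p+1}$ with prescribed faces, and passing between these and genuine smooth homotopies requires an argument. (b) The ``standard reparametrization'' comparing smooth maps out of Kihara's $\Delta^p$ that are constant \emph{on} $\dot{\Delta}^p$ with the models defining $\pi^\dcal_p(X,x)$ in \cite{CW} and \cite{IZ} (iterated smooth loop spaces; equivalently maps constant \emph{near} the boundary, or maps out of $\rbb^p$ or the diffeological sphere) is not standard: the diffeology of $\Delta^p$ near the corners is exactly the delicate point, and converting ``constant on the boundary'' into ``constant near the boundary'' within a smooth homotopy class is what the collapsing maps of type $\psi^p_k$ (used in Sections 4.2--4.3 of this paper for a closely related purpose) are built for. (c) You correctly flag the horn-filling versus concatenation issue for the group law, but flagging it does not discharge it. In short, the proposal is a reasonable roadmap, not a proof; the hard analytic/diffeological work is precisely what is being outsourced to \cite[Theorem 1.4]{origin}.
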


\begin{rem}\label{modelstr}
	\begin{itemize}
		\item[{\rm (1)}] Define a map $f :X\longrightarrow Y$ in $\mathcal{D}$ to be
		\begin{itemize}
			\item[$({\rm i})$]
			a {\sl weak equivalence} if $S^{\mathcal{D}} f:S^{\mathcal{D}} X\longrightarrow S^{\mathcal{D}} Y$ is a weak equivalence in the category of simplicial sets,
			\item[$({\rm ii})$]
			a {\sl fibration} if the map $f$ has the right lifting property with respect to the inclusions $\Lambda^p_k \longhookrightarrow\Delta^p$ for all $p>0$ and $0\leq k\leq p$, and
			\item[$({\rm iii})$]
			a {\sl cofibration} if the map $f$ has the left lifting property with respect to all maps that are both fibrations and weak equivalences.
		\end{itemize}
		Then, $\mathcal{D}$ is a compactly generated model category whose object is always fibrant. In fact, the sets of morphisms of $\dcal$
		\begin{eqnarray*}
			\ical & = & \{\dot{\Delta}^{p} \longhookrightarrow \Delta^{p} \ | \ p\geq 0 \},\\
			\jcal & = & \{\Lambda^{p}_{k} \longhookrightarrow \Delta^{p} \ | \ p>0,\ 0 \leq k \leq p \}
		\end{eqnarray*}
		are the sets of generating cofibrations and generating trivial cofibrations respectively (\cite[Theorem 1.3]{origin}). See \cite[Definition 15.2.1]{MP} for a compactly generated model category. By Theorem \ref{homotopygp}, weak equivalences in $\dcal$ are just smooth maps inducing isomorphisms on smooth homotopy groups.
		\item[{\rm (2)}] The adjoint pairs
		$$
		|\ |_{\dcal}: \scal \rightleftarrows \dcal: S^{\dcal} \text{ and } \tilde{\cdot}: \dcal \rightleftarrows \ccal^{0}: R
		$$
		are pairs of Quillen equivalences (\cite[Theorem 1.5]{smh}). Note that the composite of these adjoint pairs is just the adjoint pair
		$$
		|\ | : \scal \rightleftarrows \ccal^0 : S
		$$
		of the topological realization and singular functors.
	\end{itemize}
\end{rem}

\subsection{Homotopy type of $S^\dcal(X)$}
In this subsection, we recall from \cite{smh} the basic results on the homotopy type of $S^\dcal(X)$; they are not essential in the later sections, but they are related to a few results in Section 6.

For a diffeological space $X$, consider the unit $id:X\longrightarrow R\widetilde{X}$ of the adjoint pair $\widetilde{\cdot}:\dcal \rightleftarrows \czero:R$. By applying $S^\dcal(=\dcal(\Delta^\bullet, \cdot))$, we have the natural inclusion
\[
S^\dcal X \longhookrightarrow S \widetilde{X}
\]
(see Proposition \ref{axioms}(Axiom 1)).

If $X$ is a good diffeological space such as a cofibrant object or a $C^\infty$-manifold in the sense of \cite[Section 27]{KM}, then $S^\dcal X \longhookrightarrow S\widetilde{X}$ is a weak equivalence (\cite[Corollary 1.6, Proposition 2.6, and Theorem 11.2]{smh}). Hence, we can calculate the homotopy groups and the (co)homology groups of such good diffeological spaces as those of the underlying topological spaces.

On the other hand, if $X$ is a pathological diffeological space such as an irrational torus, then $S^\dcal X\longhookrightarrow S\widetilde{X}$ is not a weak equivalence (see \cite[Appendix A]{smh}). See Section 6.2 for an approach to the homotopy type of $S^\dcal(X)$ of pathological diffeological spaces $X$ such as irrational tori and $\rbb/\qbb$.

\begin{rem}\label{}
The (co)homology and homotopy groups of diffeological spaces have the same desirable properties as those of topological spaces. Further, the (co)homology and homotopy groups of a diffeological space are just those of its singular complex. Thus, we can apply various algebraic topological and simplicial homotopical tools to the calculation of the (co)homology and homotopy groups of a diffeological space $X$ whether or not $X$ is a good diffeological space (see \cite[Section 3.1]{smh}, Theorem \ref{homotopygp}, and Remark \ref{bdletriviality2}).
\end{rem}

\section{Smooth singular complexes}\label{section2}
In this section, we summarize the basic notions and results on the smooth singular complexes $S^\dcal_{\rm sub}(X)$ and $S^\dcal_{\rm aff}(X)$ (Section 3.1), and then show that there exist natural morphisms between $S^\dcal_{\rm aff}(X)$, $S^\dcal_{\rm sub}(X)$, and $S^\dcal(X)$ which induce chain homotopy equivalences, and hence isomorphisms on (co)homology (Section 3.2). We also show that the singular functors $S^\dcal_{\rm aff}$, $S^\dcal_{\rm sub}$, and $S^\dcal$ transform diffeological coverings to simplicial coverings (Section 3.3); this result is used to reduce the proof of Theorem \ref{thm1.1}.

\subsection{Smooth singular complexes $S^\dcal(X)$, $S^\dcal_{\rm sub}(X)$, and $S^\dcal_{\rm aff}(X)$}
By using the cosimplicial diffeological space $\Delta^\bullet = \{\Delta^p\}$, the singular complex $S^\dcal(X)$ is defined by
\[
	S^\dcal(X) = \dcal(\Delta^\bullet, X),
\]
which is intensively studied in \cite{origin, smh} (see Section 2.2).

Let $\abb^p$ denote the affine $p$-space $\{(x_0, \ldots, x_p)\in \rbb^{p+1}\:|\: \sum x_i = 1\}$ endowed with the sub-diffeology of $\rbb^{p+1}$. Since $\abb^\bullet = \{\abb^p \}$ is a cosimplicial diffeological space, the singular complex $S^\dcal_{\rm aff}(X)$ is defined by
\[
S^\dcal_{\rm aff}(X) = \dcal(\abb^\bullet, X).
\]
The singular complex $S^\dcal_{\rm aff}(X)$ was introduced by Christensen-Wu \cite{CW} ; they used the singular functor $S^\dcal_{\rm aff}$ to define the classes of weak equivalences, fibrations, and cofibrations in $\dcal$, but the model axioms are not yet verified.

Let $\Delta^p_{\rm sub}$ denote the set $\Delta^p$ endowed with the sub-diffeology of $\abb^p$. Since $\Delta^\bullet_{\rm sub} = \{\Delta^p_{\rm sub}\}$ is a cosimplicial diffeological space, the singular complex $S^\dcal_{\rm sub}(X)$ is defined by
\[
S^\dcal_{\rm sub}(X) = \dcal(\Delta^\bullet_{\rm sub}, X).
\]
The singular complex $S^\dcal_{\rm sub}(X)$ was used by Hector \cite{H} to study diffeological spaces by homotopical means.

Now, we summarize the basic properties of $\Delta^p$, $\Delta^p_{\rm sub}$, and $\abb^p$, and the relations among them, which are needed later. A subset $A$ of $\abb^p$ endowed with the sub-diffeology of $\abb^p$ is denoted by $A_{\rm sub}$. The notion of $\dcal$-contractibility  (or smooth contractibility) is defined in the obvious manner (a $\dcal$-contractible diffeological space is often called simply a contractible diffeological space if there is no confusion in context).
\begin{lem}\label{simplex}
	\begin{itemize}
		\item[{\rm (1)}] The diffeological spaces $\Delta^p$, $\Delta^p_{\rm sub}$, and $\abb^p$ are smoothly contractible.
		\item[{\rm (2)}] The underlying topological spaces of $\Delta^p$ and $\Delta^p_{\rm sub}$ are just the topological standard $p$-simplex. The underlying topological space of $\abb^p$ is just the set $\abb^p$ endowed with the usual topology.
		\item[{\rm (3)}] The map $id : \Delta^{p} \longrightarrow \Delta^{p}_{\rm sub}$ is smooth, which restricts to the diffeomorphism $id: \Delta^{p}-{\rm sk}_{p-2}\ \Delta^{p} \xrightarrow[\cong]{} (\Delta^{p} - {\rm sk}_{p-2}\ \Delta^{p})_{\rm sub}$, where ${\rm sk}_{p-2}$ $\Delta^{p}$ denotes the $(p-2)$-skeleton of $\Delta^{p}$.
	\end{itemize}
\end{lem}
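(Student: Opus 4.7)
The strategy is to prove the three parts by induction on $p$, using Axioms 1--4 and the final structure on $\Delta^p$. Parts (1) and (2) reduce largely to classical observations about convex subsets of affine spaces; the bulk of the work is in part (3).

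For (1), smooth contractibility of $\abb^p$ is immediate via any affine isomorphism $\abb^p \cong \rbb^p$. For $\Delta^p_{\rm sub}$, the straight-line contraction $H(z,t) = (1-t)z + t\cdot(0)$ to the vertex $(0)$ is smooth because it is the restriction of a smooth affine map on $\abb^p \times \rbb$, and its image lies in $\Delta^p$ by convexity. For $\Delta^p$, I would verify directly that this same straight-line homotopy is smooth with respect to the final diffeology: given a plot $(\alpha, \beta)$ of $\Delta^p \times I$, factor $\alpha$ locally through some $\varphi_i$ as $\alpha = \varphi_i(\alpha', s)$, and show by an explicit computation (case analysis on whether $H(\alpha(u_0), \beta(u_0))$ is a vertex, lies on a proper face, or is interior) that $H \circ (\alpha, \beta)$ again factors locally through some $\varphi_j$. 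Alternatively, one can combine Axiom 4 with the observation that $\Lambda^p_0 = \bigcup_{i \neq 0} d^i(\Delta^{p-1})$ contains the vertex $(0)$ in each face and is itself smoothly contractible by an affine retraction, inducting on $p$.

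For (2), the claim for $\Delta^p$ is Axiom 1. The underlying topology of $\abb^p$ coincides with the standard topology on $\abb^p$ because an affine isomorphism $\abb^p \cong \rbb^p$ transports the sub-diffeology of $\rbb^{p+1}$ onto the standard diffeology of $\rbb^p$. For $\Delta^p_{\rm sub}$, the underlying topology is finer than or equal to the subspace topology (every plot is continuous into $\rbb^{p+1}$), and conversely, for any $x \in \Delta^p$ and any subspace-open neighborhood $W$ of $x$, a small affine chart from an open ball in $\rbb^p$ (via $\abb^p \cong \rbb^p$) centered at $x$ yields a plot of $\Delta^p_{\rm sub}$ whose image covers a subspace-open neighborhood of $x$ inside $W$, showing $W$ is open in the underlying topology.

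For (3), smoothness of $id: \Delta^p \to \Delta^p_{\rm sub}$ is proved by induction on $p$. For $p \leq 1$ the two diffeologies coincide by definition. For $p \geq 2$, by the final structure on $\Delta^p$ it suffices to verify smoothness of each $\varphi_i : \Delta^{p-1} \times [0,1) \to \Delta^p_{\rm sub}$; this follows from the induction hypothesis (rendering $\varphi_i$ smooth when seen through $\Delta^{p-1}_{\rm sub}$) combined with the fact that the formula for $\varphi_i$ defines a smooth affine map $\abb^{p-1} \times \rbb \to \abb^p$. For the restriction to $\Delta^p - {\rm sk}_{p-2}(\Delta^p)$, only the reverse direction is nontrivial: given a plot $q : U \to \abb^p$ with image in $\Delta^p - {\rm sk}_{p-2}(\Delta^p)$, I would argue by induction that $q$ is a plot of $\Delta^p$. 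Since $q(u_0)$ has at most one zero coordinate and is not a vertex, one may choose $i$ with $q_i(u_0) \in (0, 1)$; on a neighborhood of $u_0$, set $r := (d^i)^{-1}\bigl((q - q_i \cdot (i))/(1 - q_i)\bigr)$, so that $q = \varphi_i(r, 1 - q_i)$. The key bookkeeping is that $r(u_0)$ still has at most one zero coordinate (inherited from $q(u_0)$, using $q_i(u_0) > 0$), so $r(u_0) \in \Delta^{p-1} - {\rm sk}_{(p-1)-2}(\Delta^{p-1})$; the induction hypothesis then applies to the smooth map $r : U \to \abb^{p-1}$ and yields that $r$ is a plot of $\Delta^{p-1}$, whence $q = \varphi_i(r, 1-q_i)$ is a plot of $\Delta^p$. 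The main obstacle is precisely this inductive step, and the dimension $p-2$ in the statement is exactly tuned so that the projected map $r$ lands in the complement of ${\rm sk}_{p-3}(\Delta^{p-1})$, making the induction go through.
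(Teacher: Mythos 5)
The paper's own ``proof'' of this lemma is entirely by citation to \cite{origin} (Remark 9.3 for the contractibility of $\Delta^p$, Lemma 2.12 for the underlying topologies, Lemmas 3.1 and 4.2 for part (3)), so you are reconstructing arguments rather than matching a written proof. Your part (3) is a correct reconstruction and is essentially the argument of \cite[Lemmas 3.1 and 4.2]{origin}: the induction via the final structure for the forward direction, and for the converse the local factorization $q=\varphi_i(r,1-q_i)$ with the observation that deleting the (nonzero) $i$-th coordinate drops the count of zero coordinates so that $r$ lands in $\Delta^{p-1}-{\rm sk}_{p-3}\,\Delta^{p-1}$, is exactly the right bookkeeping.

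The genuine gap is in part (1), for $\Delta^p$ itself. Smoothness of the straight-line contraction $H(z,s)=(1-s)z+s(0)$ into $\Delta^p$ (as opposed to $\Delta^p_{\rm sub}$) is the entire content of the claim, because the target carries the finer diffeology precisely at ${\rm sk}_{p-2}\,\Delta^p$, and $H$ does hit that locus: e.g.\ for $p\ge 3$ and $z$ near a vertex $(j)$ with $j\neq 0$, $H(z,s)$ lies near the edge $[(0),(j)]\subset {\rm sk}_{p-2}\,\Delta^p$. Showing that $H\circ(\varphi_j\times 1_I)$ refactors through some $\varphi_k$ — in particular producing a smooth $\Delta^{p-1}$-valued (not merely $\Delta^{p-1}_{\rm sub}$-valued) ``angular part'' after dividing by the parameter that vanishes at the vertex — is exactly the computation you defer to ``an explicit computation (case analysis)'', and it is the hard part of \cite[Remark 9.3]{origin}. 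Your fallback via Axiom 4 does not escape this: an ``affine retraction'' of $\Lambda^p_0$ onto a vertex raises the same smoothness question for the sub-diffeology of $\Delta^p$ on the horn, and deducing contractibility of $\Lambda^p_0$ from contractibility of its facets requires a gluing argument you have not supplied. Separately, your argument in part (2) for $\Delta^p_{\rm sub}$ proves the same inclusion twice: continuity of plots into $\rbb^{p+1}$ already shows every subspace-open set is $D$-open, and your ``converse'' again concludes that a subspace-open $W$ is $D$-open. The direction that needs proof is that every $D$-open set is subspace-open, and the tool you propose — an affine chart from an open ball centered at $x$ — is not even a plot of $\Delta^p_{\rm sub}$ when $x$ lies on the boundary, since its image leaves the simplex. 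One needs the convexity argument of \cite[Lemma 2.12]{origin} (or the smooth-curve criterion for the $D$-topology) at boundary points.
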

\begin{proof}
	\begin{itemize}
		\item[{\rm (1)}] The smooth contractibility of $\Delta^p_{\rm sub}$ and $\abb^p$ are obvious. See \cite[Remark 9.3]{origin} for the smooth contractibility of $\Delta^p$.
		\item[{\rm (2)}] The result for $\Delta^p$ follows from Proposition \ref{axioms}. The results for $\Delta^p_{\rm sub}$ and $\abb^p$ follow from \cite[Lemma 2.12]{origin}.
		\item[{\rm (3)}] See \cite[Lemmas 3.1 and 4.2]{origin}.\qedhere
	\end{itemize}
\end{proof}

\begin{rem}\label{extra}
	In this remark, we recall the left adjoints of $S^\dcal_{\rm sub}$ and $S^\dcal_{\rm aff}$, and see that $S^\dcal_{\rm sub}(X)$ and $S^\dcal_{\rm aff}(X)$ need not be Kan.
	\begin{itemize}
		\item[{\rm (1)}] As mentioned above, the realization functor $|\ |_\dcal: \scal \longrightarrow \dcal$ is a left adjoint of the singular functor $S^\dcal:\dcal \longrightarrow \scal$, and the composite of the adjoint pairs $|\ |_\dcal:\scal \rightleftarrows \dcal:S^\dcal$ and $\tilde{\cdot}:\dcal \rightleftarrows \czero:R$ is just the adjoint pair $|\ |: \scal \rightleftarrows \czero:S$ (see Remark \ref{modelstr}(2)).\\
		\hspace{1.0em} Similarly, we can define the realization functor $|\ |'_\dcal: \scal \longrightarrow \dcal$ by
		\[
		|K|'_\dcal = \underset{\Delta \downarrow K}{\rm colim}\: \Delta^n_{\rm sub},
		\]
		which is a left adjoint of the singular functor $S^\dcal_{\rm sub}:\dcal \longrightarrow \scal$. The composite of the adjoint pairs $|\ |'_\dcal:\scal \rightleftarrows \dcal:S^\dcal_{\rm sub}$ and $\tilde{\cdot}:\dcal \rightleftarrows \czero:R$ is also just the adjoint pair $|\ |:\scal \rightleftarrows \czero:S$ (see Lemma \ref{simplex}(2)).\\
		\hspace{1.0em} The realizations $|K|_\dcal$ and $|K|'_\dcal$ of a simplicial complex $K$ viewed as a simplicial set (\cite[Example 1.4]{May}) are just the diffeological polyhedra $|K|_\dcal$ and $|K|'_\dcal$ respectively, in \cite[Section 8.1]{smh}; they played an essential role in the proof of the homotopy cofibrancy theorem \cite[Theorem 1.10]{smh}.\\
		\hspace{1.0em} Christensen and Wu \cite{CW} defined the realization functor $|\ |''_{\dcal}:\scal \longrightarrow \dcal$ by
		\[
		|K|''_\dcal = \underset{\Delta\downarrow K}{\rm colim}\: \abb^n,
		\]
		which is a left adjoint of the singular functor $S^\dcal_{\rm aff}:\dcal \longrightarrow \scal$.
		\item[{\rm (2)}]  Let us see that $S^{\dcal}_{\rm sub}(X)$ need not be Kan. For this, we consider the extension problem in $\scal$
		\begin{center}
			\begin{tikzcd}
			\Lambda_0[2] \arrow[r, "d^1+d^2"] \arrow[d, hook'] & S^\dcal_{\rm sub}(\Lambda^2_{0\:{\rm sub}})\\
			\Delta[2] , \arrow[ur, dashed]
			\end{tikzcd}
		\end{center}
\vspace{-1.5ex} where $\Lambda_0[2] \xrightarrow{d^1+d^2} S^\dcal_{\rm sub}(\Lambda^2_{0\:{\rm sub}})$ is the simplicial map whose restriction to the $i$-th face corresponds to (the corestriction of) $d^i: \Delta^1\longrightarrow \Delta^2$ for $i=1,2$. Suppose that this extension problem has a solution $r$. Then, we have the commutative diagram in $\dcal$ \vspace{-1.5ex} 
		\begin{center}
			\begin{tikzcd}
			\lvert \Lambda_0 [2]\rvert'_\dcal \arrow[r, "d^1+d^2"] \arrow[d, hook'] & \Lambda^2_{0\:{\rm sub}}\\
			\Delta^2_{\rm sub} \arrow[ru, "r" swap]
			\end{tikzcd}
		\end{center}
		\vspace{-1.5ex} (see Part 1). Noticing that $\lvert \Lambda_0 [2]\rvert'_\dcal$ can be set-theoretically identified with $\Lambda^2_0$, we see that $r$ is a $\dcal$-retraction of $\Delta^2_{\rm sub}$ onto $\Lambda^2_{0\;{\rm sub}}$, which is a contradiction (see \cite[Proposition A.2(2)]{origin}); see also \cite[Remark 8.2]{smh}.\\
\hspace{1em}Similarly, we can use \cite[Theorem 5.13]{BJ} to see that $S^\dcal_{\rm aff}((d^1 \abb^1 \cup d^2 \abb^1)_{\rm sub})$ is not Kan; however, it has already been shown that $S^\dcal_{\rm aff}(X)$ need not be Kan (see \cite[Section 4.3]{CW}).
	\end{itemize}
\end{rem}

\subsection{Natural transformations between $S^\dcal_{\rm aff}$, $S^\dcal_{\rm sub}$, and $S^\dcal$}
In this subsection, we construct natural morphisms between $S^\dcal_{\rm aff}(X)$, $S^\dcal_{\rm sub}(X)$, and $S^\dcal(X)$, and show that they induce chain homotopy equivalences between $\zbb S^\dcal_{\rm aff}(X)$, $\zbb S^\dcal_{\rm sub}(X)$, and $\zbb S^\dcal(X)$, and hence isomorphisms on the (co)homology with arbitrary coefficients.

First, we show that the singular functors $S^\dcal$, $S^\dcal_{\rm sub}$, and $S^\dcal_{\rm aff}$ preserve homotopy. See Section 2.2 for the $\dcal$-homotopical notions.
\begin{lem}\label{homotopypreserving}
	For smooth maps $f,g: X\longrightarrow Y$, consider the following conditions:
	\begin{itemize}
		\item[{\rm (i)}] $f \simeq_\dcal g: X\longrightarrow Y$.
		\item[{\rm (ii)}] $S^\dcal f \simeq S^\dcal g: S^\dcal(X) \longrightarrow S^\dcal (Y)$.
		\item[{\rm (iii)}] $H_\ast(f) = H_\ast(g): H_\ast(X)\longrightarrow H_\ast(Y)$.
	\end{itemize}
	Then, the implications ${\rm (i)}\Rightarrow {\rm (ii)}\Rightarrow {\rm (iii)}$ hold. The same conclusion applies to the functors $S^\dcal_{\rm sub}$ and $S^\dcal_{\rm aff}$, and their homologies.
	\begin{proof}
		{\sl The result for $S^\dcal$.} See \cite[Lemma 9.4(2)]{origin} for ${\rm (i)}\Rightarrow {\rm (ii)}$. For ${\rm (ii)\Rightarrow {\rm (iii)}}$, see \cite[pp. 12-13]{May}.
		
\hspace{-0.6cm}{\sl The result for $S^\dcal_{\rm sub}$}. Recall that $\Delta^1 = \Delta^1_{\rm sub}$. Then, a similar argument applies.
		
\hspace{-0.6cm}{\sl The result for $S^\dcal_{\rm aff}$}. Observe that $f\simeq_\dcal g$ if and only if there exists a smooth map $H:X \times \abb^1 \longrightarrow Y$ such that $H(\cdot, (0))=f$ and $H(\cdot, (1))=g$. Then, a similar argument applies.
	\end{proof}
\end{lem}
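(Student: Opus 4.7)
The plan is to establish both implications for $S^\dcal$ first and then reduce the cases of $S^\dcal_{\rm sub}$ and $S^\dcal_{\rm aff}$ to essentially the same argument. The organizing remark is that, by adjointness, a simplicial homotopy between $S^\ast(f)$ and $S^\ast(g)$ corresponds to a smooth map $X\times M\to Y$ with prescribed restrictions to the two vertices, where $M$ equals $\Delta^1$, $\Delta^1_{\rm sub}$, or $\abb^1$ depending on whether we are working with $S^\dcal$, $S^\dcal_{\rm sub}$, or $S^\dcal_{\rm aff}$ (these being the respective realizations of $\Delta[1]$).

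For $(\mathrm{i})\Rightarrow(\mathrm{ii})$ in the $S^\dcal$ case, I would take a smooth homotopy $H\colon X\times\Delta^1\to Y$ from $f$ to $g$ and produce the simplicial homotopy as the composite
$$
S^\dcal(X)\times\Delta[1]\longrightarrow S^\dcal(X)\times S^\dcal(\Delta^1)\longrightarrow S^\dcal(X\times\Delta^1)\xrightarrow{\;S^\dcal(H)\;}S^\dcal(Y),
$$
where the first arrow uses the unit $\Delta[1]\to S^\dcal|\Delta[1]|_\dcal=S^\dcal(\Delta^1)$ supplied by Corollary~\ref{lbryhorn}(1), and the second is the canonical map $S^\dcal(A)\times S^\dcal(B)\to S^\dcal(A\times B)$ coming from the projections. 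Evaluating at the two vertices of $\Delta[1]$ recovers $S^\dcal(f)$ and $S^\dcal(g)$. For $(\mathrm{ii})\Rightarrow(\mathrm{iii})$ I would invoke the classical prism operator on $\zbb S^\dcal(X)$: any simplicial homotopy induces a chain homotopy between the induced chain maps, yielding equality on $H_\ast$. This step is purely simplicial, as in \cite[pp.~12--13]{May}.

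The case of $S^\dcal_{\rm sub}$ follows by exactly the same argument once one notes that $\Delta^1=\Delta^1_{\rm sub}$ by Definition~\ref{simplices}. For $S^\dcal_{\rm aff}$, the only extra ingredient is the equivalence indicated in the proof hint: a smooth $\Delta^1$-homotopy $H$ extends to a smooth $\abb^1$-homotopy $\widetilde H$ with the same endpoint values, while, conversely, any $\abb^1$-homotopy restricts to a $\Delta^1$-homotopy along the diffeological inclusion $\Delta^1\hookrightarrow\abb^1$ coming from Lemma~\ref{simplex}(3) with $p=1$. For the extension, I would set $\widetilde H(x,t)=H(x,\rho(t))$, where $\rho\colon\abb^1\to\Delta^1$ is a smooth plateau function fixing each vertex and equal to the corresponding constant $0$ or $1$ near the complementary end. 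With this equivalence in hand, the argument for $S^\dcal$ transfers verbatim using $|\Delta[1]|''_\dcal=\abb^1$. I do not expect any substantive obstacle; the mildly technical point is the construction of $\rho$, which is standard.
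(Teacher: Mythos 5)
Your proposal is correct and follows essentially the same route as the paper: the adjointness/product argument for (i)$\Rightarrow$(ii) is exactly what the cited \cite[Lemma 9.4(2)]{origin} does, (ii)$\Rightarrow$(iii) is the standard prism operator, and the $S^\dcal_{\rm aff}$ case rests on the same observation that $f\simeq_\dcal g$ is equivalent to the existence of a smooth $\abb^1$-homotopy, which you justify by the same reparametrization-by-plateau-function and restriction along $\Delta^1_{\rm sub}\hookrightarrow \abb^1$ that the paper leaves implicit.
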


Using Lemmas \ref{simplex} and \ref{homotopypreserving}, we can prove the following result.

\begin{prop}\label{natural}
	There exist natural morphisms of simplicial sets
	\[
	S^\dcal_{\rm aff}(X) \longrightarrow S^\dcal_{\rm sub}(X) \longhookrightarrow S^\dcal(X)
	\]
	which induce chain homotopy equivalences
	\[
		\zbb S^\dcal_{\rm aff}(X)\longrightarrow \zbb S^\dcal_{\rm sub}(X)\longrightarrow \zbb S^\dcal(X).
	\]
	\begin{proof}
		We prove the result in three steps. \vspace{1ex}\\
Step 1. {\sl Construction of natural morphisms}. By Lemma \ref{simplex}(3), we have the canonical morphisms of cosimplicial diffeological spaces
			\[
			\Delta^\bullet \overset{id}{\longrightarrow} \Delta^\bullet_{\rm sub} \longhookrightarrow \abb^\bullet,
			\]
			which induce natural morphisms 
			\[
			S^\dcal_{\rm aff}(X) \overset{\kappa}{\longrightarrow} S^\dcal_{\rm sub}(X) \overset{\iota}{\longhookrightarrow} S^\dcal(X).
			\]
			Step 2. We show that for $p\ge 0$, the following hold:
			\[
				H_\ast \zbb S^\dcal_{\rm aff}(\Delta^p) \cong H_\ast \zbb S^\dcal_{\rm sub}(\Delta^p) \cong H_\ast \zbb S^{\dcal} (\Delta^p)\cong \zbb[0],
			\]
			where $\zbb[0]$ denotes the graded module with $\zbb[0]_0 = \zbb$ and $\zbb[0]_i = 0$ $(i\neq 0)$. It is easily seen that these isomorphisms hold for $p=0$. Thus, they hold for any $p\ge 0$ by Lemmas \ref{homotopypreserving} and \ref{simplex}(1).\vspace{2mm}\\
			Step 3. To prove the rest of the statement, we ``augment" the singular chain complexes $\zbb S^\dcal(X)$, $\zbb S^\dcal_{\rm sub}(X)$, and $\zbb S^\dcal_{\rm aff}(X)$ in a canonical manner (see \cite[p. 194]{EM}); the augmented singular chain complexes are denoted by $\zbb S^\dcal(X)\tilde{}$, $\zbb S^\dcal_{\rm sub}(X)\tilde{}$, and $\zbb S^\dcal_{\rm aff}(X)\tilde{}$. Then, we have
			\[
				H_\ast \zbb S^\dcal_{\rm aff}(\Delta^p)\tilde{} = H_\ast \zbb S^\dcal_{\rm sub}(\Delta^p)\tilde{} = H_\ast \zbb S^\dcal(\Delta^p)\tilde{} = 0
			\]
			(Step 2). Since each component of degree $\geq 0$ of $\zbb S^\dcal(X)\tilde{}$ (resp., $\zbb S^\dcal_{\rm sub}(X)\tilde{}$, $\zbb S^\dcal_{\rm aff}(X)\tilde{}$) is representable for the set of model objects $\{\Delta^p\}_{p\ge 0}$ (resp., $\{\Delta^p_{\rm sub}\}_{\rm p\ge 0}$, $\{\abb^p\}_{p\ge 0}$) in the sense of \cite[p. 189]{EM}, we can use \cite[Theorem II]{EM} to construct chain homotopy inverses of the augmented natural chain maps
			\[
				\zbb S^\dcal_{\rm aff}(X)\tilde{} \overset{\zbb \kappa \tilde{}}{\longrightarrow} \zbb S^\dcal_{\rm sub}(X)\tilde{} \overset{\zbb \iota\tilde{}}{\longrightarrow} \zbb S^\dcal(X)\tilde{}
			\]
			so that they restrict to chain homotopy inverses of the natural chain maps
			\[
			\zbb S^\dcal_{\rm aff}(X) \overset{\zbb \kappa}{\longrightarrow} \zbb S^\dcal_{\rm sub}(X)\overset{\zbb \iota}{\longrightarrow} \zbb S^\dcal(X)
			\]
			(see Step 1).
	\end{proof}
\end{prop}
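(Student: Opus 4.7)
The plan is to construct the two natural simplicial maps from maps of cosimplicial diffeological spaces, and then verify that they induce chain homotopy equivalences on the associated chain complexes via the method of acyclic models (Eilenberg--MacLane).

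First I would assemble the simplicial maps. By Lemma \ref{simplex}(3), the identity $\Delta^p \to \Delta^p_{\rm sub}$ is smooth, and $\Delta^p_{\rm sub} \hookrightarrow \abb^p$ is a $\dcal$-embedding; since coface and codegeneracy maps are affine, they are smooth under every diffeology considered, so these assemble into morphisms of cosimplicial diffeological spaces $\Delta^\bullet \to \Delta^\bullet_{\rm sub} \hookrightarrow \abb^\bullet$. Applying $\dcal(-,X)$ termwise yields the natural morphisms $S^\dcal_{\rm aff}(X) \xrightarrow{\kappa} S^\dcal_{\rm sub}(X) \xhookrightarrow{\iota} S^\dcal(X)$.

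Next I would check the hypotheses of acyclic models. In each degree $n$, the functor $X\mapsto \zbb S^\dcal(X)_n$ is corepresented by $\Delta^n$, and similarly $\zbb S^\dcal_{\rm sub}(X)_n$ and $\zbb S^\dcal_{\rm aff}(X)_n$ are corepresented by $\Delta^n_{\rm sub}$ and $\abb^n$, so the families $\{\Delta^p\}$, $\{\Delta^p_{\rm sub}\}$, $\{\abb^p\}$ serve as model objects for the three respective functors. Evaluated on any model, the augmented singular chain complex is acyclic: by Lemma \ref{simplex}(1) each model is $\dcal$-contractible, and by Lemma \ref{homotopypreserving} every one of the three singular functors sends smoothly homotopic maps to chain-homotopic chain maps, so the augmented complex on a model is chain homotopy equivalent to that of a point, hence null-homotopic.

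With representability and acyclicity established, I would invoke the Eilenberg--MacLane acyclic models comparison theorem on the augmented complexes to produce natural chain maps in the reverse direction, covering the identity in degree zero, together with chain homotopies showing that they are two-sided inverses of the augmented versions of $\zbb\kappa$ and $\zbb\iota$. Restricting away the augmentation then yields the desired chain homotopy inverses of $\zbb\kappa$ and $\zbb\iota$. The main obstacle I anticipate is not any single step but the bookkeeping of running the acyclic models machinery uniformly across three different singular functors (each with its own set of models) and confirming that the homotopy inverses and homotopies produced are compatible with the natural transformations $\kappa$ and $\iota$; apart from this, everything follows directly from contractibility and the homotopy invariance of the singular functors.
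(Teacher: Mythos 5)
Your proposal is correct and follows essentially the same route as the paper: construct the maps from the cosimplicial morphisms $\Delta^\bullet \to \Delta^\bullet_{\rm sub} \hookrightarrow \abb^\bullet$ of Lemma \ref{simplex}(3), verify acyclicity of the augmented complexes on the models via smooth contractibility (Lemma \ref{simplex}(1)) and homotopy preservation (Lemma \ref{homotopypreserving}), and then apply the Eilenberg--MacLane acyclic models theorem with the three families of model objects. No substantive differences.
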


Recall the definitions of $H_{\ast}(X;A)$ and $H^{\ast}(X;A)$ from Section 1.

\begin{cor}\label{quism}
	Let $A$ be an abelian group.
	\begin{itemize}
		\item[{\rm (1)}] The natural morphisms of simplicial sets
		\[
			S^\dcal_{\rm aff}(X)\longrightarrow S^\dcal_{\rm sub}(X) \longhookrightarrow S^\dcal(X)
		\]
		induce isomorphisms of graded modules
		\[
			H_\ast (\zbb S^\dcal_{\rm aff}(X)\otimes A) \underset{\cong}{\longrightarrow} H_\ast(\zbb S^\dcal_{\rm sub}(X)\otimes A) \underset{\cong}{\longrightarrow} H_\ast(X; A),
		\]
		\[
			H^\ast{\rm Hom}(\zbb S^\dcal_{\rm aff}(X), A) \underset{\cong}{\longleftarrow} H^\ast {\rm Hom}(\zbb S^\dcal_{\rm sub}(X), A) \underset{\cong}{\longleftarrow} H^\ast(X; A).
		\]
		\item[{\rm (2)}] If $A$ is a commutative associative ring with unit, then $H^\ast(X; A)$, $H^\ast{\rm Hom}(\zbb S^\dcal_{\rm sub}(X), A)$, and $H^\ast {\rm Hom}(\zbb S^\dcal_{\rm aff}(X), A)$ have natural commutative graded $A$-algebra structures and the isomorphisms between them are isomorphisms of graded $A$-algebras.
	\end{itemize}
\begin{proof}
	\begin{itemize}
		\item[{\rm (1)}] The result is immediate from Proposition \ref{natural}.
		\item[{\rm (2)}] See \cite[Remark 3.8(2)]{smh} for $H^\ast(X; A)$. The argument there also applies to $H^\ast {\rm Hom} (\zbb S^\dcal_{\rm sub}(X), A)$ and $H^\ast {\rm Hom}(\zbb S^\dcal_{\rm aff}(X), A)$. Since the cohomology isomorphisms in Part 1 are induced by the natural simplicial maps
$$S^\dcal_{\rm aff}(X) \longrightarrow S^\dcal_{\rm sub}(X) \longhookrightarrow S^\dcal(X) ,$$
they are isomorphisms of graded $A$-algebras.\qedhere
	\end{itemize}
\end{proof}
\end{cor}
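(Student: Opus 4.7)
The plan is to deduce Corollary \ref{quism} as a formal consequence of Proposition \ref{natural}. For Part (1), Proposition \ref{natural} already supplies chain homotopy equivalences
\[
\zbb S^\dcal_{\rm aff}(X) \longrightarrow \zbb S^\dcal_{\rm sub}(X) \longrightarrow \zbb S^\dcal(X).
\]
Since the additive functors $-\otimes A$ and $\Hom(-,A)$ transport chain homotopies to chain homotopies, applying them produces chain homotopy equivalences of the resulting (co)chain complexes (with arrows reversed in the cohomological case). Passing to (co)homology and invoking the definitions $H_{\ast}(X;A)=H_{\ast}(\zbb S^\dcal(X)\otimes A)$ and $H^{\ast}(X;A)=H^{\ast}\Hom(\zbb S^\dcal(X),A)$ from Section~1 then yields the asserted isomorphisms.

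For Part (2), I would equip each of the three cochain complexes with the Alexander--Whitney cup product, obtained by dualizing the standard diagonal approximation on $\zbb K$ for any simplicial set $K$ and using the multiplication of $A$. As already recorded in \cite[Remark 3.8(2)]{smh} for $H^{\ast}(X;A)$, this construction yields a natural graded-commutative $A$-algebra structure on cohomology, and the argument applies verbatim to $H^{\ast}\Hom(\zbb S^\dcal_{\rm sub}(X),A)$ and $H^{\ast}\Hom(\zbb S^\dcal_{\rm aff}(X),A)$ since it depends only on the simplicial structure of the underlying complex. Because the Alexander--Whitney map is natural in simplicial maps, the cochain maps induced by $\zbb\kappa$ and $\zbb\iota$ are homomorphisms of differential graded algebras; hence the cohomology isomorphisms from Part (1) are isomorphisms of graded $A$-algebras.

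The only point that needs a moment's thought---and it is not really an obstacle---is that the chain homotopy inverses produced via acyclic models in Proposition \ref{natural} need not be multiplicative. This is harmless: the algebra structures on cohomology are transported along the forward natural maps $\zbb\kappa$ and $\zbb\iota$, which are genuine simplicial maps and therefore strictly compatible with the Alexander--Whitney diagonal; once one knows that these forward maps induce isomorphisms on cohomology (which is Part (1)), multiplicativity on the level of cohomology follows automatically.
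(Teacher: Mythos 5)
Your proposal is correct and follows essentially the same route as the paper: Part (1) is the observation that the additive functors $-\otimes A$ and $\Hom(-,A)$ preserve the chain homotopy equivalences of Proposition \ref{natural}, and Part (2) transports the graded $A$-algebra structures along the forward simplicial maps (which are strictly compatible with the Alexander--Whitney diagonal), exactly as in the paper's appeal to \cite[Remark 3.8(2)]{smh}. Your explicit remark that the acyclic-models homotopy inverses need not be multiplicative, but that this is irrelevant because the forward maps already induce the isomorphisms, is a worthwhile clarification of a point the paper leaves implicit.
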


\begin{rem}\label{cubic}
	In the study of differential forms and de Rham cohomology of diffeological spaces, Iglesias-Zemmour introduced the complex $\bvec{\mathsf{C}}_\star (X)$ of reduced groups of cubic chains for a diffeological space $X$, and called its homology $\bvec{\mathsf{H}}_{\ast}(X)$ the cubic homology of $X$ (\cite[pp. 182-183]{IZ}).
	
	We can easily see that $\bvec{\mathsf{H}}_\ast(X)$ is a smooth homotopy invariant. In fact, given a smooth homotopy $H:\rbb\times X\longrightarrow Y$ between $f$ and $g$, a chain homotopy $H_\sharp : \bvec{\mathsf{C}}_\ast(X)\longrightarrow \bvec{\mathsf{C}}_{\ast+1}(Y)$ between $\bvec{\mathsf{C}}_\ast(f)$ and $\bvec{\mathsf{C}}_\ast(g)$ is defined by
	\[
		\rbb^p \overset{\sigma}{\longrightarrow} X \longmapsto \rbb^{p+1} = \rbb \times \rbb^p \overset{1\times \sigma}{\longrightarrow} \rbb \times X \overset{H}{\longrightarrow} Y.
	\]
	Thus, by an argument similar to that in the proof of Proposition \ref{natural}, we can use \cite[Theorem II]{EM} to construct a natural chain homotopy equivalence between $\bvec{\mathsf{C}}_\ast(X)$ and $\zbb S^\dcal(X)$, showing that $\bvec{\mathsf{H}}_\ast(X)$ is naturally isomorphic to $H_\ast(X)$.
\end{rem}

The basic idea of the proof that $\zbb S^\dcal(X)$, $\zbb S^\dcal_{\rm sub}(X)$, and $\bvec{\mathsf{C}}_\ast(X)$ are chain homotopy equivalent was briefly discussed in \cite[Remark 3.9]{smh}. It is also shown in \cite[Section 4.1]{Kuri20} that $\zbb S^\dcal_{\rm aff}(X)$, $\zbb S^\dcal_{\rm sub}(X)$, and $\bvec{\mathsf{C}}_\ast(X)$ are chain homotopy equivalent.

\subsection{Diffeological coverings}
The notion of a diffeological fiber bundle is a generalization of that of a locally trivial fiber bundle, and is defined by local triviality of the pullback along any plot (see \cite[8.9]{IZ}). A diffeological fiber bundle with discrete fiber is called a {\sl diffeological covering}.

Similarly, a simplicial fiber bundle is defined by triviality of the pullback along any map from $\Delta[p]$ $(p \ge 0)$ (see \cite[Definition 11.8]{May}). A simplicial fiber bundle with discrete fiber is called a {\sl simplicial covering}.

We prove the following result, which is used in the proof of Theorem \ref{thm1.1}.

\begin{prop}\label{cover}
	The singular functors $S^\dcal$, $S^\dcal_{\rm sub}$, and $S^\dcal_{\rm aff}$ transform diffeological coverings with fiber $F$ to simplicial coverings with fiber $F$. Hence, a diffeological covering $\pi:E\longrightarrow X$ with fiber $F$ defines the natural morphisms of simplicial coverings with fiber $F$
	\begin{center}
		\begin{tikzcd}
		S^\dcal_{\rm aff}(E) \arrow[r] \arrow[d, "S^\dcal_{\rm aff}(\pi)" swap] & S^\dcal_{\rm sub}(E) \arrow[r, hook] \arrow[d, "S^\dcal_{\rm sub}(\pi)" swap] & S^\dcal(E) \arrow[d, "S^\dcal (\pi)" swap]\\
		S^\dcal_{\rm aff}(X) \arrow[r] & S^\dcal_{\rm sub}(X) \arrow[r, hook] & S^\dcal(X).
		\end{tikzcd}
	\end{center}
\begin{proof}
	We prove the result in three steps. \vspace{1ex}\\  
	Step 1. We show that $S^\dcal(\pi): S^\dcal(E) \longrightarrow S^\dcal(X)$ is a simplicial covering with fiber $F$.\\
	Assume given a map $k:\Delta[p] \longrightarrow S^\dcal(X)$ and let $\kappa: \Delta^p \longrightarrow X$ be the smooth map corresponding to $k$. Noticing that $\Delta^p$ is smoothly contractible (Lemma \ref{simplex}(1)), we then have a pullback diagram in $\dcal$
		\begin{center}
			\begin{tikzcd}
			\Delta^p \times F \arrow[r] \arrow[d, "proj" swap] & E \arrow[d, "\pi"]\\
			\Delta^p \arrow[r, "\kappa"] & X
			\end{tikzcd}
		\end{center}
		(see \cite[p. 264]{IZ}). Note that $S^\dcal$ is a right adjoint and consider the commutative diagram in $\scal$ consisting of two pullback squares
		\begin{center}
			\begin{tikzcd}
			\Delta[p]\times S^\dcal(F) \arrow[r] \arrow[d, "proj" swap] & S^\dcal(\Delta^p)\times S^\dcal(F) \arrow[r] \arrow[d, "proj" swap] & S^\dcal(E)\arrow[d, "S^\dcal(\pi)" swap]\\
			\Delta[p] \arrow[r] & S^\dcal(\Delta^p) \arrow[r, "S^\dcal(\kappa)"] & S^\dcal(X),
			\end{tikzcd}
		\end{center}
		where $\Delta[p]\longrightarrow S^\dcal(\Delta^p)$ is the map corresponding to the $p$-simplex $1_{\Delta^p}$ of $S^\dcal(\Delta^p)$. Then, the outer rectangle gives the desired local triviality of $S^\dcal(\pi)$ (see \cite[Exercise 8 on page 72]{Mac}). \vspace{1ex}\\
\hspace{-0.3em}Step 2. Note that $\Delta^p_{\rm sub}$ and $\abb^p$ are smoothly contractible (Lemma \ref{simplex}(1)) and that $S^\dcal_{\rm sub}$ and $S^\dcal_{\rm aff}$ are right adjoints (Remark \ref{extra}(1)). Then, by an argument similar to that in Step 1, we can see that $S^\dcal_{\rm sub}(\pi)$ and $S^\dcal_{\rm aff}(\pi)$ are also simplicial coverings with fiber $F$.\vspace{1ex}\\ 
\hspace{-0.27em}Step 3. The natural morphisms of simplicial coverings are defined by Proposition \ref{natural}.
\end{proof}
\end{prop}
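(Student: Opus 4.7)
The plan is to treat the three singular functors in parallel, since each one is a right adjoint (by Remark \ref{extra}(1)) and each corresponding cosimplicial space $\Delta^{\bullet}$, $\Delta^{\bullet}_{\rm sub}$, $\abb^{\bullet}$ is levelwise smoothly contractible (Lemma \ref{simplex}(1)). I will write out the argument for $S^{\dcal}$ in detail and then indicate that the other two cases are strictly analogous.

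First, given a map $k\colon\Delta[p]\to S^{\dcal}(X)$, I would use the adjunction $|\ |_{\dcal}\dashv S^{\dcal}$ to pass to its adjoint $\kappa\colon\Delta^{p}\to X$. Since $\Delta^{p}$ is smoothly contractible, the standard fact on diffeological fiber bundles (\cite[p.~264]{IZ}) gives a trivialization of the pullback bundle $\kappa^{*}\pi$, i.e.\ a pullback diagram in $\dcal$
\[
\begin{tikzcd}
\Delta^{p}\times F \arrow[r]\arrow[d,"{\rm proj}"swap] & E \arrow[d,"\pi"]\\
\Delta^{p} \arrow[r,"\kappa"] & X .
\end{tikzcd}
\]
Here $F$ is discrete, so $\Delta^{p}\times F$ really is a disjoint union of copies of $\Delta^{p}$.

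Next I would apply $S^{\dcal}$. Because $S^{\dcal}$ is a right adjoint, it preserves pullbacks and products, so the square above remains a pullback in $\scal$, and $S^{\dcal}(\Delta^{p}\times F)\cong S^{\dcal}(\Delta^{p})\times S^{\dcal}(F)$. Since $F$ is discrete and each $\Delta^{q}$ is connected, $S^{\dcal}(F)$ is the constant simplicial set on the set $F$. Composing with the unit map $\Delta[p]\to S^{\dcal}|\Delta[p]|_{\dcal}=S^{\dcal}(\Delta^{p})$ (which corresponds to $1_{\Delta^{p}}$), I obtain a diagram of two pullback squares
\[
\begin{tikzcd}
\Delta[p]\times F \arrow[r]\arrow[d,"{\rm proj}"swap] & S^{\dcal}(\Delta^{p})\times F \arrow[r]\arrow[d,"{\rm proj}"swap] & S^{\dcal}(E)\arrow[d,"S^{\dcal}(\pi)"]\\
\Delta[p] \arrow[r] & S^{\dcal}(\Delta^{p}) \arrow[r,"S^{\dcal}(\kappa)"] & S^{\dcal}(X),
\end{tikzcd}
\]
and the pasting lemma for pullbacks shows that the outer rectangle is also a pullback. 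Thus the pullback of $S^{\dcal}(\pi)$ along an arbitrary $k\colon\Delta[p]\to S^{\dcal}(X)$ is trivial with fiber $F$, which is exactly the definition of a simplicial covering.

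The same argument applies verbatim to $S^{\dcal}_{\rm sub}$ and $S^{\dcal}_{\rm aff}$: replace $\Delta^{p}$ by $\Delta^{p}_{\rm sub}$ or $\abb^{p}$ (both smoothly contractible by Lemma \ref{simplex}(1)), and use that each functor is a right adjoint with the corresponding realization as left adjoint (Remark \ref{extra}(1)). Finally, the commutative diagram of natural morphisms of simplicial coverings is simply the naturality in $X$ of the morphisms $S^{\dcal}_{\rm aff}\to S^{\dcal}_{\rm sub}\to S^{\dcal}$ constructed in Proposition \ref{natural}. I do not expect any real obstacle here: the entire argument is a formal consequence of the right-adjointness of the three singular functors together with the smooth contractibility of the three cosimplicial simplices, the only subtlety being the observation that discreteness of $F$ makes $S^{\dcal}(F)$, $S^{\dcal}_{\rm sub}(F)$, $S^{\dcal}_{\rm aff}(F)$ all equal to the constant simplicial set on $F$.
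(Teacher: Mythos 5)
Your proof is correct and follows essentially the same route as the paper: pass to the adjoint $\kappa$, trivialize the pullback over the smoothly contractible (co)simplex using \cite[p.~264]{IZ}, apply the right-adjoint singular functor, and paste the two pullback squares, with naturality supplied by Proposition \ref{natural}. The only (welcome) addition is your explicit observation that discreteness of $F$ makes $S^{\dcal}(F)$ the constant simplicial set on $F$, which the paper leaves implicit when asserting the fiber is $F$.
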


\section{Weak equivalences between smooth singular complexes}\label{section3}
In this section, we prove Theorem \ref{thm1.1} and Corollary \ref{cor1.2}, using results of Section 3.

 The main statement of Theorem \ref{thm1.1} is divided into the following two parts:
  \begin{itemize}
   \item[(I)] The natural map $S^{\dcal}_{\mathrm{sub}}(X)\longhookrightarrow S^{\dcal}(X)$ is a weak equivalence.
   
   \item[(II)] The natural map $S^{\dcal}_{\mathrm{aff}}(X)\longrightarrow S^{\dcal}(X)$ is a weak equivalence.
  \end{itemize}
 After constructing a fibrant approximation functor for the category of simplicial sets in Section \ref{section3.1}, we prove  Parts I and II in Sections \ref{section3.2} and \ref{section3.3}, respectively. We complete the proofs of Theorem \ref{thm1.1} and Corollary \ref{cor1.2} in Section 4.4.
 

\subsection{Fibrant approximation to a simplicial set}\label{section3.1}
 The category $\scal$ of simplicial sets is a cofibrantly generated model category having
 \[
  \jcal_{\scal} = \{\Lambda_k[p]\longhookrightarrow \Delta[p] \mid p>0, 0\le k \le p \}
 \]
 as a set of generating trivial cofibrations. Applying the infinite glueing construction \cite[pp. 104-105]{DS} for $\jcal_S$ to a simplicial map $\varphi: K\to L$, we thus obtain the factorization
 \begin{center}
  \begin{tikzcd}[column sep=4em]
   K \arrow[r, "i"] \arrow[rd, "\varphi"'] & K' \arrow[d, "p"] \\
   & L
  \end{tikzcd}
 \end{center}
 where $i$ is a trivial cofibration and $p$ is a fibration. However, since every simplicial map to the terminal object $\ast$ has a right lifting property for $\Lambda_k [1] \longhookrightarrow \Delta[1]$ ($k = 0, 1$), we can construct a fibrant approximation $K \hspace{0.2em} \mathbf{\hat{}}$ of $K$ by applying the infinite glueing construction for
 \[
  \jcal'_{\scal} = \{\Lambda_k[p] \longhookrightarrow \Delta[p] \mid p>1, 0\le k\le p \}
 \]
 to $K\to \ast$. Let $\scal_f$ denote the full subcategory of $\scal$ consisting of fibrant objects (i.e., Kan complexes). Then, the functor $\cdot \ \mathbf{\hat{}}: \scal\to \scal_f$ is a fibrant approximation functor, for which $K \hspace{0.15em}\mathbf{\hat{}}_0 = K_0$ holds. An attachment of $\Delta[2]$ along $\Lambda_k[2]$ adds one nondegenerate $2$-simplex and one nondegenerate $1$-simplex, which correspond to the basic $2$-simplex of $\Delta[2]$ and its $k$-th face respectively.

\subsection{Proof of Part I}\label{section3.2}
 Let us begin by reducing the proof to simpler cases. First, consider the decomposition $X = \coprod X_\alpha$ into connected components (see \cite[pp. 105-107]{IZ}). Since
 \[
   S^{\dcal}_\text{sub}(X) = \coprod S^{\dcal}_{\text{sub}}(X_\alpha) \text{ and } S^\dcal(X) = \coprod S^\dcal(X_\alpha),
 \]
 we may assume that $X$ is connected.

 Next, consider the universal covering $\varpi: Z\to X$ (see \cite[p. 264]{IZ}). By Proposition \ref{cover}, we then have the morphism of simplicial coverings with fiber $\pi^{\dcal}_1(X)$
 \begin{center}
  \begin{tikzcd}[column sep=4em]
   \pi^{\dcal}_1(X) \arrow[r, "="'] \arrow[d] & \pi^{\dcal}_1(X) \arrow[d] \\
   S^{\dcal}_{\mathrm{sub}}(Z) \arrow[r, hook] \arrow[d] & S^{\dcal}(Z) \arrow[d] \\
   S^{\dcal}_{\mathrm{sub}}(X) \arrow[r, hook] & S^{\dcal}(X).
  \end{tikzcd}
 \end{center}
 Hence, we may assume that $X$ is $1$-connected (note that $S^{\dcal}_{\mathrm{sub}}(X)$ need not be a Kan complex and use \cite[Theorem 4.2 in Chapter III]{GZ}).

Since $S^\dcal(X)$ is Kan (Corollary \ref{lbryhorn}(2)), the inclusion $S^\dcal_{\rm sub}(X)\longhookrightarrow S^\dcal(X)$ extends to a map $S^\dcal_{\rm sub}(X) \hspace{0.2em} \mathbf{\hat{}} \longrightarrow S^\dcal(X)$, which induces an isomorphism on the homology (see Corollary \ref{quism}). Thus, we have only to show that
 \[
  \pi_1(S^{\dcal}_{\mathrm{sub}}(X) \hspace{0.2em} \mathbf{\hat{}} ,x_0) = 0
 \]
 for any fixed $x_0 \in X$ (see Theorem \ref{homotopygp} and \cite[Theorem 13.9]{May}).

 Recall from \cite[p. 8]{GJ} and \cite[Lemma 16.3]{May} the following facts on the topological realization functor $|\;|: \scal\to \czero$:
  \begin{itemize}
   \item The topological realization $|K|$ of a simplicial set $K$ is a $CW$-complex having one $n$-cell for each non-degenerate $n$-simplex of $K$.
   
   \item For a pointed Kan complex $(K, k_0)$, the simplicial fundamental group $\pi_1(K,k_0)$ is naturally isomorphic to the topological fundamental group $\pi_1(|K|,k_0)$.
  \end{itemize}
 For a simplicial set $K$, $NK_n$ denotes the set of non-degenerate $n$-simplices of $K$. The $n$-cell of $|K|$ corresponding to $\sigma\in NK_n$ is also denoted by $\sigma$. The $1$-cell $\sigma$ of $|K|$ is endowed with the canonical orientation; the $1$-cell $\sigma$ endowed with the reverse orientation is denoted by $\bar{\sigma}$. We also use the standard notation ${\rm sk}_n\: K$ for the $n$-skeleton of $K$.

 From these facts and the construction of the fibrant approximation $K \hspace{0.2em} \mathbf{\hat{}}$ of $K$, we see the following:
  \begin{itemize}
   \item $\pi_1(S^{\dcal}_{\rm sub}(X) \hspace{0.2em} \mathbf{\hat{}}, x_0) \cong \pi_1(|{\rm sk}_2\: S^\dcal_{\mathrm{sub}}(X) \hspace{0.2em} \mathbf{\hat{}} \hspace{0.15em}|, x_0)$.
   
	\item Every element of $\pi_1(|{\rm sk}_2\: S^{\dcal}_{\mathrm{sub}}(X) \hspace{0.2em} \mathbf{\hat{}} \hspace{0.15em}|, x_0) $ can be represented by a continuous map
	 \[
	  \omega: (\Delta^1_{\mathrm{top}}, \dot{\Delta}^1_{\mathrm{top}})\longrightarrow (|{\rm sk}_1\: S^{\dcal}_{\mathrm{sub}}(X)\hspace{0.2em} \mathbf{\hat{}} \hspace{0.15em}|, x_0).
	 \]
	 Further, $\omega$ can be chosen as the concatenation of finitely many $1$-cells $\tau_1, \ldots, \tau_l$, where $\tau_j = \sigma_j$ or $\bar{\sigma_j}$ for some $\sigma_j \in NS^\dcal_{\rm sub}(X)_1$.
  \end{itemize}
 We would like to simplify the expression $\tau_1 \cdots \tau_l$ for $\omega$ and show that $\omega$ is null homotopic rel $\dot{\Delta}^1_{\rm top}$.
 
  A smooth $1$-simplex $\sigma:\Delta^1_\text{sub}\longrightarrow X$ of a diffeological space $X$ is called {\it tame} if $\sigma$ is constant near each vertex. By the following lemma, we may assume that each $\sigma_j$ is tame.
  \begin{lem}\label{lem3.1}
  	Let $X$ be a diffeological space and $\sigma$ a $1$-simplex of $S^{\dcal}_{\mathrm{sub}}(X)$. Then, there exists a $2$-simplex $\Sigma$ of $S^{\dcal}_{{\rm sub}}(X)$ such that $d_0 \Sigma$ is the constant map to $\sigma((1))$, $d_1 \Sigma$ is tame, and $d_2 \Sigma = \sigma$.
  \end{lem}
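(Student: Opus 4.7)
The plan is to construct $\Sigma$ as a composition $\Sigma = \sigma \circ \psi$, where $\psi \colon \Delta^2_{\rm sub} \longrightarrow \Delta^1_{\rm sub}$ is a smooth map with the correct boundary behavior. Writing $\psi = (\psi_0, 1 - \psi_0)$, the three face conditions on $\Sigma$ translate to conditions on the single function $\psi_0 \colon \Delta^2_{\rm sub} \longrightarrow [0,1]$: namely $\psi_0(x_0, x_1, 0) = x_0$ on the face $x_2 = 0$ (giving $d_2\Sigma = \sigma$), $\psi_0(0, x_1, x_2) = 0$ on the face $x_0 = 0$ (giving $d_0\Sigma$ constantly equal to $\sigma((1))$), and $\psi_0(x_0, 0, x_2) = 1 - \eta(x_2)$ on the face $x_1 = 0$, where $\eta \colon [0,1] \longrightarrow [0,1]$ is a fixed smooth cutoff with $\eta \equiv 0$ near $0$ and $\eta \equiv 1$ near $1$ (this makes $d_1 \Sigma$ tame).

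To realize all three boundary conditions simultaneously, I will fix $\eta$ as above and define the auxiliary function $\xi \colon [0,1] \longrightarrow [0,\infty)$ by $\xi(s) = (1 - \eta(s))/(1-s)$ for $s < 1$, extended by $0$ on the neighborhood of $1$ on which $\eta \equiv 1$. Then $\xi$ is smooth on all of $[0,1]$, and I set
\[
\psi_0(x_0, x_1, x_2) = 1 - \eta(x_2) - \xi(x_2) \, x_1.
\]
The first and third boundary conditions are immediate; the second follows from the identity $\xi(s)(1-s) = 1 - \eta(s)$, since on the face $x_0 = 0$ one has $x_1 = 1 - x_2$. A short case split (according to whether $\eta$ is $0$, in its transition region, or $1$) shows that $\psi_0$ takes values in $[0,1]$ throughout $\Delta^2$, so that $(\psi_0, 1 - \psi_0)$ does land in $\Delta^1$.

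The substantive point is that $\Sigma$ must be smooth with respect to the sub-diffeology of $\Delta^2_{\rm sub}$ (equivalently, the restriction of the smooth structure of $\abb^2 \subset \rbb^3$), not merely continuous. Since $\psi$ collapses the edge joining vertices $(1)$ and $(2)$ together with the vertex $(2)$ itself onto the point $(0,1) \in \Delta^1$, there is a potential coordinate singularity at vertex $(2)$: the natural slice parametrization $u = x_1/(1-x_2)$ breaks down as $x_2 \to 1$. The cutoff $\eta$ is engineered precisely to kill this singularity --- since $1-\eta$ vanishes identically near $s=1$, the quotient $\xi(s)$ extends smoothly across $s=1$, so that $\psi_0$, being a polynomial in $x_1$ with coefficients that are smooth functions of $x_2$ on all of $[0,1]$, is the restriction of a smooth function on $\abb^2$. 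Hence $\Sigma = \sigma \circ (\psi_0, 1 - \psi_0)$ is smooth on $\Delta^2_{\rm sub}$, completing the construction; the main obstacle is precisely this smoothness check at vertex $(2)$, and everything else is a direct verification.
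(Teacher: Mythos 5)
Your construction is correct, and at the top level it follows the same strategy as the paper: both proofs realize $\Sigma$ as $\sigma$ precomposed with a smooth map $\Delta^2_{\rm sub}\longrightarrow \Delta^1_{\rm sub}$ that is the identity on the $d^2$-edge, collapses the $d^0$-edge to the vertex $(1)$, and restricts on the $d^1$-edge to a reparametrization $1-\eta$ that is constant near the endpoints. Where you differ is in how that map is produced. The paper first builds a self-map $F$ of $\Delta^2_{\rm sub}$ piecewise: on the collar $U=\{x_1<\tfrac12\}$ of the $d^1$-face it interpolates, via a bump function $\phi$ in the slice coordinate $x_2/(1-x_1)$, between the tame reparametrization $\mu$ on the face and the identity near $\{x_1=\tfrac12\}$, and then composes with the degeneracy $s^1(x_0,x_1,x_2)=(x_0,x_1+x_2)$ and with $\sigma$. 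You instead write the composite directly as the single formula $\psi_0=1-\eta(x_2)-\xi(x_2)x_1$; on the slice $x_2=c<1$ this is $(1-\eta(c))\bigl(1-x_1/(1-c)\bigr)$, and the role the paper's bump function $\phi$ plays in making the collar construction smooth is played in your argument by the observation that $\xi(s)=(1-\eta(s))/(1-s)$ extends smoothly across $s=1$ because its numerator vanishes identically there. Your checks are all sound: the three face identities, the bound $0\le\psi_0\le 1$ (which uses $x_1\le 1-x_2$ and $\xi\ge 0$), and smoothness, since $\psi_0$ is visibly the restriction of a smooth function on $\abb^2$ and hence $(\psi_0,1-\psi_0)$ is smooth for the sub-diffeologies. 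The net effect is a more compact, closed-form version of the paper's construction; the paper's collar-and-bump version has the mild advantage that it is phrased so as to transfer verbatim to the affine analogue (Lemma \ref{lem3.2}), but your formula adapts there just as easily.
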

  
  \begin{proof}
    We choose a non-decreasing smooth function $\mu: [0,1]\to [0,1]$ such that $\mu\equiv 0$ near $0$ and $\mu\equiv 1$ near $1$, and construct the desired $2$-simplex $\Sigma$ of $S^\dcal_{\rm sub}(X)$ in two steps.\vspace{-2ex}\\
    
  		\noindent Step 1: {\sl Construction of $F:\Delta^2_{\text{sub}}\to \Delta^2_{\text{sub}}$}. We construct a smooth map $F: \Delta^2_{\text{sub}}\to \Delta^2_\text{sub}$ (i.e., a $2$-simplex $F$ of $S^\dcal_{\rm sub}(\Delta^2_{\rm sub})$) satisfying the following condition: 
  		\begin{itemize}
  			\item[$\bullet$] Each $d_i F$ corestricts to the $i$-th face of $\Delta^2_{\text{sub}}$ and the corestriction of $d_i F$ is identified with
  			\[
  			\begin{cases*}
  			id & \text{for $i = 0,2,$} \\
  			\mu & \text{for $i = 1,$}
  			\end{cases*}
  			\]
  			in a canonical manner.
  		\end{itemize}
  		Set $U = \left\{ (x_0, x_1, x_2)\in \Delta^2_{\text{sub}} \mid 0\le x_1 < \frac{1}{2} \right\}$. Choose a non-increasing smooth function $\phi: \left[ 0, \frac{1}{2} \right]\to [0,1]$ such that $\phi\equiv 1$ near $0$ and $\phi\equiv 0$ near $\frac{1}{2}$. Under the identification
  		\begin{eqnarray*}
  			U & \xrightarrow[\ \ \cong \ \ ]{} & [0,1] \times \left[ 0, \frac{1}{2} \right), \\
  			(x_0, x_1, x_2) & \longmapsto & \left( \frac{x_2}{1-x_1}, x_1 \right)
  		\end{eqnarray*}
  		define the self-map $U \xrightarrow{\ \ f \ \ } U$ by
  		\[
  		f(x,y) = (\phi(y)\mu(x)+(1-\phi(y))x, y).
  		\]
  		Then, the desired map $\Delta^2_{\text{sub}}\xrightarrow{\ \ F \ \ } \Delta^2_\text{sub}$ is defined by
  		\[
  		F = 
  		\begin{cases}
  		f \ \ \text{ on } U\\
  		id \ \ \text{ outside } U.
  		\end{cases}
  		\]
  		Step 2: {\sl Construction of $\Sigma: \Delta^2_{\text{sub}}\to X$}. The desired $2$-simplex $\Sigma$ of $S^{\dcal}_{\text{sub}}(X)$ is defined to be the composite
  		\[
  		\Delta^2_{\text{sub}} \xrightarrow{\ \ F \ \ } \Delta^2_{\text{sub}} \xrightarrow{\ \ s^1 \ \ } \Delta^1_\text{sub} \xrightarrow{\ \ \sigma \ \ } X,
  		\]
  	where $s^1$ is defined by $s^1(x_0, x_1, x_2) = (x_0, x_1+x_2)$.
  \end{proof}
  Second, let us see that $\omega$ can be chosen as the concatenation of $\sigma_1, \ldots, \sigma_l$ for some $\sigma_1,\ldots,\sigma_l \in NS^\dcal_{\rm sub}(X)_1$. For this, consider $\Sigma_j \in S^\dcal_{\rm sub}(X)_2$ defined to be the composite
 \[
  \Delta^2_{\rm sub} \xrightarrow{\ \ s \ \ } \Delta^1_{\rm sub} \xrightarrow{\ \ \sigma_j \ \ } X,
 \]
 where $s(x_0, x_1, x_2) = (x_0+x_2, x_1)$. Then, $d_2 \Sigma_j = \sigma_j$, $d_1 \Sigma_j$ is constant, and ${\sigma_j}' := d_0 \Sigma_j$ satisfies ${\sigma_j}'(t)=\sigma_j(1-t)$. Thus, if $\tau_j = \bar{\sigma_j}$, then we can replace $\tau_j$ with $\sigma'_j$. Hence, we may assume that $\omega$ is the concatenation of $\sigma_1, \ldots, \sigma_l$.

 Third, let us see that $\omega$ can be chosen as the continuous map corresponding to a single tame $1$-simplex $\sigma$ of $S^{\dcal}_{\mathrm{sub}}(X)$. For this, we first consider the extension problem in $\dcal$
 \begin{center}
  \begin{tikzcd}[column sep=4em]
   \Lambda^2_{1\; \mathrm{sub}} \arrow[r, "\sigma_2 + \sigma_1"] \arrow[d, hook'] & X \\
   \Delta^2_{\mathrm{sub}}, \arrow[ru, dashed]
  \end{tikzcd}
 \end{center} 
where $\sigma_2 + \sigma_1: \Lambda^2_{1\: \mathrm{sub}} \longrightarrow X$ is defined to be $\sigma_2$ on the 0th face and $\sigma_1$ on the 2nd face. (the smoothness of $\sigma_2+\sigma_1$ follows from the tameness of $\sigma_1$ and $\sigma_2$). We define the map $\Sigma: \Delta^2_{\mathrm{sub}}\to X$ to be the composite
 \[
  \Delta^2_{\mathrm{sub}} \xrightarrow{\ \ r \ \ } \Lambda^2_{1\; \mathrm{sub}} \xrightarrow{\ \sigma_2 + \sigma_1 \ } X,
 \]
 where $r$ is the continuous retraction onto $\Lambda^2_{1\; \mathrm{sub}}$ depicted in Fig. \ref{fig3.1}. Noticing that $\sigma_1$ and $\sigma_2$ are tame, we can easily see that $\Sigma$ is a solution of the extension problem in $\dcal$ such that $\eta:= d_1\Sigma$ is also tame. Thus, $\omega$ can be chosen as the concatenation of $\eta,\sigma_3, \ \ldots,\sigma_l$. By iterating this procedure, we may assume that $\omega$ is the continuous map corresponding to a single tame $1$-simplex $\sigma$ of $S^{\dcal}_{\mathrm{sub}}(X)$.\vspace{-2.0ex}\\
 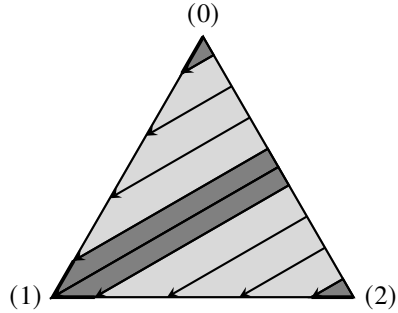
\begin{figure}[htbp]
  \centering
  \begin{tikzpicture}[thick, baseline=0pt]
   \coordinate[label=left:$(1)$]  (1) at (0,0);
   \coordinate[label=right:$(2)$] (2) at (4,0);
   \coordinate[label=above:$(0)$] (0) at (2,3.464);
		
   \coordinate[](a) at ($ (0)!.07!(2)$);
   \coordinate[](b) at ($ (0)!.19!(2)$);
   \coordinate[](c) at ($ (0)!.31!(2)$);
   \coordinate[](d) at ($ (0)!.43!(2)$);
   \coordinate[](e) at ($ (0)!.5!(2)$);
   \coordinate[](f) at ($ (0)!.57!(2)$);
   \coordinate[](g) at ($ (0)!.69!(2)$);
   \coordinate[](h) at ($ (0)!.81!(2)$);
   \coordinate[](i) at ($ (0)!.93!(2)$);
		
   \coordinate[](j) at ($ (0)!.14!(1)$);
   \coordinate[](k) at ($ (0)!.38!(1)$);
   \coordinate[](l) at ($ (0)!.62!(1)$);
   \coordinate[](m) at ($ (0)!.86!(1)$);
		
   \coordinate[](n) at ($(1)!.14!(2)$);
   \coordinate[](o) at ($(1)!.38!(2)$);
   \coordinate[](p) at ($(1)!.62!(2)$);
   \coordinate[](q) at ($(1)!.86!(2)$);
		
   \filldraw[fill=gray] (m)--(1)--(e)--(d)--cycle;
   \filldraw[fill=gray] (1)--(n)--(f)--(e)--cycle;
   \filldraw[fill=gray] (0)--(j)--(a)--cycle;
   \filldraw[fill=gray] (2)--(q)--(i)--cycle;
		
   \filldraw[fill=gray, opacity=0.3] (j)--(m)--(d)--(a)--cycle;
   \filldraw[fill=gray, opacity=0.3] (f)--(i)--(q)--(n)--cycle;
		
   \draw [thick, line width=0.5mm] (m) -- (1) -- (n) node []{};
   \draw [thick, line width=0.5mm] (0) -- (j) node []{};
   \draw [thick, line width=0.5mm] (q) -- (2) node []{};

   \draw [thick, -stealth] (a) -- (j) node []{};
   \draw [thick, -stealth] (b) -- (k) node []{};
   \draw [thick, -stealth] (c) -- (l) node []{};
   \draw [thick, -stealth] (d) -- (m) node []{};
   \draw [thick, -stealth] (e) -- (1) node []{};
		
   \draw [thick, -stealth] (f) -- (n) node []{};
   \draw [thick, -stealth] (g) -- (o) node []{};
   \draw [thick, -stealth] (h) -- (p) node []{};
   \draw [thick, -stealth] (i) -- (q) node []{};
		
   \path[clip, draw] (1) -- (2) -- (0) -- cycle;
  \end{tikzpicture}
  \caption{The retraction $r$}
  \label{fig3.1}
 \end{figure}

\noindent

 Last, let us see that $\omega$ is null homotopic rel $\dot{\Delta}^1_{\rm top}$. Since $X$ is $1$-connected, the extension problem in $\dcal$
 \begin{center}
  \begin{tikzcd}[column sep=4em]
   \dot{\Delta}^2 \arrow[r, "\sigma + 0 + 0"] \arrow[d, hook'] & X \\
   \Delta^2 \arrow[ru, dashed]
  \end{tikzcd}
 \end{center}
\hspace{1em}\vspace{-3ex}

\noindent has a solution $\Sigma$, where $0$ denotes the constant map to the base point $x_0$ (see Theorem \ref{homotopygp}).
 
 Now, we recall the smooth map $\psi^2_0: \Delta^2 \longrightarrow \Delta^2$ from \cite[Steps 1-3 in the proof of Theorem 8.6]{smh}. For $0<\epsilon < \frac{1}{2}$, set $V_i(\epsilon)=\{(x_0,x_1,x_2)\in \Delta^2\:|\: x_i > 1-\epsilon\}$. For a given $\epsilon_0$ with $0<\epsilon_0 < \frac{1}{2}$, the smooth map
 \[
 	\psi^2_0: \Delta^2 \longrightarrow \Delta^2
 \]
 is constructed so that it satisfies the following conditions:
 \begin{itemize}
 	\item $\psi^2_0$ preserves each closed simplex of $\Delta^2$.
 	\item $\psi^2_0$ maps each $V_i(\frac{\epsilon_0}{2})$ to the vertex $(i)$.
 	\item $\psi^2_0$ coincides with $1_{\Delta^2}$ on $\Delta^2 \backslash \cup V_i(\epsilon_0)$.
 \end{itemize}
Thus, we see from Lemma \ref{simplex}(3) that $\psi^2_0: \Delta^2_{\rm sub}\longrightarrow \Delta^2$ is smooth.

Consider the smooth map $\psi^2_0:\Delta^2_{\rm sub}\longrightarrow \Delta^2$ defined for sufficiently small $\epsilon_0 > 0$, and define the $2$-simplex $\Sigma'$ of $S^\dcal_{\mathrm{sub}}(X)$ to be the composite
 \[
  \Delta^2_{\mathrm{sub}}\xrightarrow{\ \ \psi^2_0 \ \ } \Delta^2 \xrightarrow{\ \ \Sigma\ \ } X.
 \]
 Since $\Sigma' |_{\dot{\Delta}^2_{\mathrm{sub}}} = \sigma+0+0$, $\Sigma'$ yields a homotopy (rel $\dot{\Delta}^1_{\mathrm{top}}$) between $\omega$ and $0$, which completes the proof.

\subsection{Proof of Part II.}\label{section3.3}
By Proposition \ref{cover} and an argument similar to that in Section \ref{section3.2}, we may assume that $X$ is $1$-connected.
 
 Since $S^\dcal(X)$ is Kan (Corollary \ref{lbryhorn}(2)), the canonical map $S^\dcal_{\rm aff}(X)\longrightarrow S^\dcal(X)$ extends to a map $S^\dcal_{\rm aff}(X)\hspace{0.1em}\hat{} \longrightarrow S^\dcal(X)$, which induces an isomorphism on the homology (see Corollary \ref{quism}). Thus, we have only to show that
 \[
  \pi_1(S^{\dcal}_{\mathrm{aff}}(X) \hspace{0.15em} \mathbf{\hat{}}\hspace{0.1em},x_0) = 0
 \]
 for any fixed $x_0 \in X$ (see Theorem \ref{homotopygp} and \cite[Theorem 13.9]{May}).
 
 Similarly to the proof of Part I, we have the following:
  \begin{itemize}
   \item $\pi_1(S^{\dcal}_{\mathrm{aff}}(X)\hspace{0.15em} \mathbf{\hat{}}\hspace{0.1em}, x_0) \cong \pi_1(|{\rm sk}_2\: S^\dcal_{\mathrm{aff}}(X)\hspace{0.15em} \mathbf{\hat{}}\hspace{0.15em}|, x_0)$.
   
	\item Every element of $\pi_1(|{\rm sk}_2 \ S^{\dcal}_{\mathrm{aff}}(X)\hspace{0.15em} \mathbf{\hat{}}\hspace{0.15em}|, x_0) $ can be represented by a continuous map
	 \[
	  \omega: (\Delta^1_{\mathrm{top}}, \dot{\Delta}^1_{\mathrm{top}})\longrightarrow (|{\rm sk}_1\: S^{\dcal}_{\mathrm{aff}}(X)\hspace{0.15em} \mathbf{\hat{}}\hspace{0.15em}|, x_0).
	 \]
	 Further, $\omega$ can be chosen as the concatenation of finitely many $1$-cells $\tau_1, \cdots, \tau_l$, where $\tau_j = \sigma_j$ or $\bar{\sigma_j}$ for some $\sigma_j \in NS^\dcal_{\mathrm{aff}}(X)_1$.
  \end{itemize} 
We would like to simplify the expression $\tau_1 \cdots \tau_l$ for $\omega$ and show that $\omega$ is null homotopic rel $\dot{\Delta}^1_{\rm top}$.

A smooth $1$-simplex $\sigma: \abb^1\to X$ of a diffeological space $X$ is 
called {\it tame} if $\sigma$ is constant near $(-\infty, 0]$ and near $[1, \infty)$, where $\abb^1$ is identified with $\rbb$ in a canonical manner. By the following analogue of Lemma \ref{lem3.1}, we may assume that each $\sigma_j$ is tame.
\begin{lem}\label{lem3.2}
	Let $X$ be a diffeological space and $\sigma$ a $1$-simplex of $S^{\dcal}_{\mathrm{aff}}(X)$. Then, there exists a $2$-simplex $\Sigma$ of $S^{\dcal}_{\mathrm{aff}}(X)$ such that $d_0 \Sigma$ is the constant map to $\sigma((1))$, $d_1 \Sigma$ is tame, and $d_2 \Sigma = \sigma$.  
\begin{proof}
	Set $U = \left\{ (x_0, x_1, x_2)\in \abb^2 \mid -\frac{1}{2} < x_1 < \frac{1}{2} \right\}$. Choose a non-decreasing smooth function $\mu: \rbb\to [0, 1]$ such that $\mu \equiv 0$ near $(-\infty, 0]$ and $\mu \equiv 1$ near $[1, \infty)$, and a smooth function $\phi: [-\frac{1}{2}, \frac{1}{2}] \to [0, 1]$ such that $\phi \equiv 1$ near $0$ and $\phi \equiv 0$ near $\{-\frac{1}{2}, \frac{1}{2}\}$. Then, we can construct the desired $2$-simplex $\Sigma$ in a manner similar to that in the proof of Lemma \ref{lem3.1}.
\end{proof}
\end{lem}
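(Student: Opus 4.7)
The plan is to imitate the construction in the proof of Lemma \ref{lem3.1} almost verbatim, adjusting for the fact that $\abb^1$ is an affine line rather than a compact interval, so that the reparametrizing function $\mu$ must be defined on all of $\rbb$. Specifically, I would set $\Sigma := \sigma \circ s \circ F$, where $s: \abb^2 \to \abb^1$ is the degeneracy $s(x_0, x_1, x_2) = (x_0, x_1 + x_2)$ and $F: \abb^2 \to \abb^2$ is a smooth self-map to be constructed so that $F$ preserves each closed edge of $\abb^2$, restricts to the identity on faces $0$ and $2$, and restricts to the $\mu$-reparametrization on face $1$ (under the canonical identification of face $1$ with $\abb^1 \cong \rbb$). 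Granting such an $F$, a direct computation shows that $d_0 \Sigma$ is constant at $\sigma((1))$ (since $s \circ F \circ d^0 = s \circ d^0$ collapses everything to the vertex $(1)$), that $d_2 \Sigma = \sigma$ (since $s \circ F \circ d^2 = s \circ d^2 = \mathrm{id}$), and that $d_1 \Sigma(t) = \sigma(\mu(t))$, which is tame because $\mu \equiv 0$ near $(-\infty, 0]$ and $\mu \equiv 1$ near $[1, \infty)$.

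To build $F$, I would work in the tubular neighborhood $U = \{(x_0, x_1, x_2) \in \abb^2 : -1/2 < x_1 < 1/2\}$ of face $1$, identified with $\rbb \times (-1/2, 1/2)$ via the smooth chart $(x_0, x_1, x_2) \mapsto (x_2/(1-x_1), x_1)$ (well-defined and smooth on $U$ because $1 - x_1 > 1/2$ there). In this chart, define $f(x, y) := (\phi(y)\mu(x) + (1-\phi(y)) x, y)$; transport $f$ back to a self-map of $U$, and extend by the identity outside $U$. Smoothness across $|x_1| = 1/2$ is immediate because $\phi$ vanishes near $\pm 1/2$. The required face conditions are then checked by inspection: face $1$ corresponds to $y = 0$, where $\phi(0) = 1$ yields $f(x, 0) = (\mu(x), 0)$; face $2$ ($x_2 = 0$) corresponds to $x = 0$, where $\mu(0) = 0$ yields $f(0, y) = (0, y)$; and face $0$ ($x_0 = 0$, i.e., $x_2 = 1 - x_1$) corresponds to $x = 1$, where $\mu(1) = 1$ yields $f(1, y) = (1, y)$.

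The one genuine point of care --- and the only real departure from Lemma \ref{lem3.1} --- is that faces $0$ and $2$ of $\abb^2$ are entire affine lines rather than compact segments, so their intersections with $U$ are long rather than small neighborhoods of a single vertex. This is precisely the reason the lemma is stated with the strong boundary conditions $\mu \equiv 0$ near $(-\infty, 0]$ and $\mu \equiv 1$ near $[1, \infty)$ (as opposed to merely $\mu(0) = 0$ and $\mu(1) = 1$): these conditions ensure that $f$ fixes the entire lines $\{x = 0\}$ and $\{x = 1\}$ in the chart, which are the traces of faces $2$ and $0$ inside the chart. Once this observation is made, the affine version of the argument reads off line by line from the compact case, and the lemma follows.
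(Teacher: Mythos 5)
Your construction is correct and is exactly the one the paper intends: the same tubular neighborhood $U$, the same chart $(x_0,x_1,x_2)\mapsto\bigl(x_2/(1-x_1),\,x_1\bigr)$, the same blended self-map $f(x,y)=(\phi(y)\mu(x)+(1-\phi(y))x,\,y)$ extended by the identity outside $U$, and $\Sigma=\sigma\circ s\circ F$. One small misattribution at the end: fixing the lines $\{x=0\}$ and $\{x=1\}$ pointwise only requires $\mu(0)=0$ and $\mu(1)=1$; the stronger requirement that $\mu$ be constant near $(-\infty,0]$ and near $[1,\infty)$ is there to make $d_1\Sigma=\sigma\circ\mu$ tame.
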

Second, let us see that $\omega$ can be chosen as the concatenation of $\sigma_1, \ldots, \sigma_l$ for some $\sigma_1,\ldots,\sigma_l \in NS^\dcal_{\rm aff}(X)_1$. For this, consider $\Sigma_j \in S^\dcal_{\rm aff}(X)_2$ defined to be the composite
\[
	\abb^2 \xrightarrow{\ \ s\ \ } \abb^1 \xrightarrow{\ \ \sigma_j\ \ } X,
\]
where $s(x_0,x_1,x_2)=(x_0+x_2,x_1)$. Then, $d_2\Sigma_j=\sigma_j, d_1\Sigma_j$ is constant, and $\sigma'_j := d_0\Sigma_j$ satisfies $\sigma'_j(t)=\sigma_j(1-t)$. Thus, if $\tau_j = \bar{\sigma_j}$, then we can replace $\tau_j$ with $\sigma'_j$. Hence, we may assume that $\omega$ is the concatenation of $\sigma_1,\ldots, \sigma_l$.

Next, we show the following lemma. For $i=0, 1, 2, d^i: \abb^1 \longrightarrow \abb^2$ denotes the obvious affine extension of $d^i: \Delta^1 \longrightarrow \Delta^2$ (see Section 2.2).
 
\begin{lem}\label{lem3.3}
 Let $X$ be a diffeological space and $\gamma_0$, $\gamma_1$, and $\gamma_2$ tame $1$-simplices of $S^{\dcal}_{\mathrm{aff}}(X)$ such that $d_0 \gamma_2 = d_1 \gamma_0$, $d_0\gamma_0 = d_0 \gamma_1$, and $d_1\gamma_1 = d_1 \gamma_2$. If the extension problem in $\dcal$
 \begin{center}
  \begin{tikzcd}[column sep=4em]
   \dot{\Delta}^2 \arrow[r, "\sum \gamma_i|_{\Delta^1}"] \arrow[d, hook'] & X \\
   \Delta^2 \arrow[ru, dashed] & 
  \end{tikzcd}
 \end{center}
 has a solution, then the extension problem in $\dcal$ 
 \begin{center}
  \begin{tikzcd}[column sep=4em]
   \cup\: d^i \abb^1 \arrow[r, "\sum \gamma_i"] \arrow[d, hook'] & X \\
   \abb^2 \arrow[ru, dashed] & 
  \end{tikzcd}
 \end{center}
 also has a solution.
\end{lem}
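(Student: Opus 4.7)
The plan is to construct a single smooth map $R : \abb^2 \longrightarrow \Delta^2$ whose restriction to each affine edge $d^i\abb^1$ has the form $d^i \circ \rho$ for a common smooth $\rho : \rbb \to [0,1]$ that is the identity on the middle of $[0,1]$ and collapses each infinite tail into a short interval near $0$ or $1$. Setting $\tilde{\Sigma} := \Sigma \circ R$, where $\Sigma : \Delta^2 \longrightarrow X$ is the given solution of the first extension problem, the hypothesis $\Sigma|_{\partial \Delta^2} = \sum \gamma_i|_{\Delta^1}$ yields $\tilde{\Sigma} \circ d^i = \gamma_i \circ \rho$, and tameness of $\gamma_i$ forces $\gamma_i \circ \rho = \gamma_i$; thus $\tilde{\Sigma}$ is the required smooth extension over $\abb^2$.

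For the construction, use tameness to fix $\epsilon > 0$ small enough that each $\gamma_i$ is constant on $(-\infty, 2\epsilon]$ and on $[1-2\epsilon, \infty)$ (take $\epsilon$ at most half the common tameness parameter), and choose a smooth $r : \rbb \to [0, \infty)$ with $r(t) = 0$ for $t \le 0$, $r(t) = t$ for $t \ge \epsilon$, and $r(t) \in [0, \epsilon]$ on $[0, \epsilon]$---for instance $r(t) = \psi(t)\,t$ for a standard cutoff $\psi$. Define
\[
 R(x_0, x_1, x_2) \;=\; \frac{\bigl(r(x_0),\, r(x_1),\, r(x_2)\bigr)}{r(x_0) + r(x_1) + r(x_2)}.
\]
On $\abb^2$, the constraint $x_0 + x_1 + x_2 = 1$ forces some $x_j \ge 1/3 > \epsilon$, so $r(x_j) = x_j > 0$ and the denominator is bounded below by $1/3$. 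Hence $R$ is smooth with image in $\Delta^2$, and restricts to the identity on $\{x_i \ge \epsilon \text{ for all } i\}$. A short computation gives $R \circ d^i = d^i \circ \rho$ with $\rho(t) = r(t)/\bigl(r(t) + r(1-t)\bigr)$, which is smooth on $\rbb$ (the denominator stays $\ge 1/2$), satisfies $\rho(t) = t$ on $[\epsilon, 1-\epsilon]$, $\rho(t) = 0$ for $t \le 0$, $\rho(t) = 1$ for $t \ge 1$, and $\rho(t) \in [0, 2\epsilon]$ for $t \in [0, \epsilon]$ (symmetrically near $1$, since $\epsilon/(1-\epsilon) \le 2\epsilon$ when $\epsilon \le 1/2$).

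The verification $\tilde{\Sigma} \circ d^i = \gamma_i$ then splits into two cases: on the middle interval $[\epsilon, 1-\epsilon]$, $R$ is the identity on the edge and the assertion reduces to the hypothesis on $\Sigma$; on each tail, both $t$ and $\rho(t)$ lie in the constant region of $\gamma_i$ (by the choice of $\epsilon$), so $\gamma_i(\rho(t)) = \gamma_i(t)$. The compatibility conditions $d_0\gamma_2 = d_1\gamma_0$, $d_0\gamma_0 = d_0\gamma_1$, $d_1\gamma_1 = d_1\gamma_2$ are used only implicitly, to ensure that $\sum \gamma_i$ is a well-defined smooth map on $\cup\, d^i \abb^1$. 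The main obstacle I anticipate is not conceptual but bookkeeping: arranging $\epsilon$ so that $\rho$'s image on each tail stays inside the tameness window of every $\gamma_i$ simultaneously, and confirming smoothness of $R$ at points where several coordinates are nonpositive, both of which are handled by the uniform lower bound on the normalizing denominator.
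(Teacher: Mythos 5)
Your reduction to a single formulaic retraction $R:\abb^2\to\Delta^2$ followed by $\Sigma$ is appealing, and the computations concerning $\rho$, the lower bound on the normalizing denominator, and the tameness windows are all correct. The gap is in the one sentence ``$\tilde{\Sigma}=\Sigma\circ R$ is the required smooth extension.'' The first extension problem is posed over the standard simplex $\Delta^2$ of Definition \ref{simplices}, not over $\Delta^2_{\rm sub}$, so $\Sigma$ is only known to be smooth for the Kihara diffeology; for the composite to be smooth you need $R$ to be smooth as a map into \emph{that} $\Delta^2$, not merely into $\Delta^2_{\rm sub}\subset\rbb^{3}$. It is not. By Lemma \ref{simplex}(3) the two diffeologies differ only near the vertices, and that is exactly where $R$ misbehaves: near a point $x^{*}\in\abb^2$ that $R$ sends to the vertex $(0)$ but near which $R$ is not locally constant (the corner of the region $\{r(x_1)=r(x_2)=0\}$), a plot of $\Delta^2$ must locally factor through $\varphi_0(y,t)=(1-t,ty_0,ty_1)$; on $\{t>0\}$ this forces $y=(r(x_1),r(x_2))/(r(x_1)+r(x_2))$, which tends to $(1,0)$ along directions where only $x_1$ becomes positive and to $(0,1)$ along directions where only $x_2$ does, so no continuous choice of $y$ exists and $R$ is not a plot of $\Delta^2$ there. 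Tweaking $r$ only moves the bad corner; any map of this normalized-cutoff form collapses two half-strips onto the two edges adjacent to $(0)$ and has this angular jump where they meet. Since in the intended application $\Sigma$ comes from the horn retraction and from Theorem \ref{homotopygp}, it is genuinely only Kihara-smooth, so the smoothness of $\Sigma\circ R$ is not established.

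This is precisely the difficulty the paper's proof is organized around: it first replaces $\Sigma$ by $\Sigma'=\Sigma\circ\psi^2$, where $\psi^2=\psi^2_0\circ\psi^2_1:\Delta^2\to\Delta^2$ is Kihara-smooth and arranges that $\Sigma'$ is constant on a sector-neighborhood of each vertex and constant along rays from $(i)$ near the $i$-th open edge, and only then extends $\Sigma'$ over $\abb^2$ by declaring it constant on the outer vertex regions and radially constant on the outer edge regions; the local constancy is what makes the extension smooth. Your argument can be repaired along the same lines: precompose with $\psi^2$ so that the resulting $\Sigma'$ is locally constant at the vertices and hence smooth on $\Delta^2_{\rm sub}$, after which $\Sigma'\circ R$ is smooth and your tameness computation $\gamma_i\circ\rho=\gamma_i$ finishes the proof --- but at that point the extra ingredient you have added is exactly the paper's.
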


\begin{proof}
	We choose a solution $\Sigma$ of the first extension problem, and use the smooth map $\psi^2:\Delta^2 \longrightarrow \Delta^2$ constructed in \cite[Steps 1-3 in the proof of Theorem 8.6]{smh} to modify and extend $\Sigma$.
	
	To describe the basic properties of $\psi^2$, we use the following notations. For a continuous self-map $f$ of $\Delta^p$, we set
	\[
		\mathrm{carr}_{k}\ f  =  \{ x \in \Delta^p \ | \ f(x) \neq x,\ f(x) \in \mathrm{sk}_{k}\ \Delta^{p} \}	\hspace{0.3cm} \text{and} \hspace{0.3cm}  \mathrm{supp}_{k}\ f  =  \overline{\mathrm{carr}_{k}\ f}.
	\]
	Further, for a subset $\{i_0,\ldots,i_k\}$ of $\{0,\ldots,p\}$, we set
	\begin{gather*}
		V_{\{i_0,\ldots, i_k\}}=\{(x_0,\ldots,x_p)\in \Delta^p\:|\: x_i > x_j \text{ for }  i\in \{i_0,\ldots,i_k\}\text{ and } j \notin \{i_0,\ldots,i_k\}\},\\
		({\rm supp}_k\: f)^\circ_{\{i_0,\ldots, i_k\}} = ({\rm supp}_k\:f)^\circ \cap V_{\{i_0,\ldots,i_k\}}.
	\end{gather*}
	
	For a given $\epsilon_0$ with $0<\epsilon_0 < \frac{1}{2}$, the smooth maps $\psi^2_k:\Delta^2 \longrightarrow \Delta^2$ $(k=0,1)$ are defined so that they satisfy the following conditions:
	\begin{itemize}
		\item $\psi^2_k$ preserves each closed simplex of $\Delta^2$ and $\psi^2_k = id$ on ${\rm sk}_k\: \Delta^2$ (note that ${\rm sk}_0\: \Delta^2= \{(0),(1),(2)\}$ and that ${\rm sk}_1\: \Delta^2 = \dot{\Delta}^2$).
		\item $({\rm supp}_0\: \psi^2_0)^\circ_{\{i\}} = V_i(\frac{\epsilon_0}{2})$ and $\psi^2_0 = id$ on $\Delta^2 \backslash \cup V_i(\epsilon_0)$ (see Section 4.2).
		\item $({\rm supp}_0\: \psi^2_0)^\circ \cup ({\rm supp}_1\: \psi^2_1)^\circ$ is a neighborhood of $\dot{\Delta}^2$.
		\item $({\rm supp}_1\: \psi^2_1)^\circ=({\rm supp}_1\: \psi^2_1)^\circ_{\{1,2\}}\: \coprod \ ({\rm supp}_1\:\psi^2_1)^\circ_{\{0,2\}} \coprod \ ({\rm supp}_1\: \psi^2_1)^\circ_{\{0,1\}}$, and $\psi^2_1$ preserves each $V_i(\frac{\epsilon_0}{2})$ and maps a point $x$ of $({\rm supp}_1\: \psi^2_1)^\circ_{\{i_0,i_1\}}$ to the intersection of the $i$-th face of $\Delta^2$ and the line through the vertex $(i)$ and $x$, where $i\neq i_0,i_1$.
	\end{itemize}
(See Fig. 2). The map $\psi^2:\Delta^2\longrightarrow \Delta^2$ is defined to be the composite
\[
	\Delta^2 \xrightarrow{\ \ \psi^2_1\ \ } \Delta^2 \xrightarrow{\ \ \psi^2_0\ \ } \Delta^2.
\]

Consider the smooth map $\psi^2:\Delta^2 \longrightarrow \Delta^2$ for a sufficiently small $\epsilon_0 > 0$ and define $\Sigma'$ to be the composite
 \[
 	  \Delta^2\xrightarrow{\ \ \psi^2 \  \ } \Delta^2 \xrightarrow{\ \ \Sigma \ \ } X.
 \]
 Then, $\Sigma'$ has the following properties:
 \begin{itemize}
 	\item $\Sigma'|_{\dot{\Delta}^2} = \Sigma|_{\dot{\Delta}^2}$.
 	\item $\Sigma'|_{{({\rm supp}_0\: \psi^2_0)^\circ_{\{i\}}}}$ is constant.
 	\item $\Sigma'|_{({\rm supp}_1\:\psi^2_1)^\circ_{\{i_0,i_1\}}}$ is constant along any ray from the vertex $(i)$ with $i\neq i_0,i_1$.
 \end{itemize}

\begin{figure}[H]
	\centering
  \begin{tikzpicture}[thick, baseline=0pt]
	\coordinate[label=above left:$({\rm supp}_0\: \psi^2_0)^\circ_{\{1\}}$]  (1) at (0,0);
	\coordinate[label=above right:$({\rm supp}_0\: \psi^2_0)^\circ_{\{2\}}$] (2) at (4,0);
	\coordinate[label=right:$({\rm supp}_0\: \psi^2_0)^\circ_{\{0\}}$] (0) at (2,3.464);
	
	\coordinate[label=above:$(0)$] (A0) at (2.0,3.8);
	\coordinate[label=below left:$(1)$] (A1) at (-0.2,0);
	\coordinate[label=below right:$(2)$] (A2) at (4.2,0);
	
	\coordinate[label=below:$({\rm supp}_1\: \psi^2_1)^\circ_{\{1,2\}}$] (B0) at (2.3,-0.3);
	\coordinate[label=right:$({\rm supp}_1\: \psi^2_1)^\circ_{\{0,2\}}$]  (B1) at (3.3,1.732);
	\coordinate[label=left:$({\rm supp}_1\: \psi^2_1)^\circ_{\{0,1\}}$]  (B2) at (0.8,1.732);
	
	\coordinate[](11) at (-1,0);
	\coordinate[](12) at (-0.5,-0.866);
	\coordinate[](01) at (1.5,4.33); 
	\coordinate[](02) at (2.5,4.33);
	\coordinate[](21) at (5,0);
	\coordinate[](22) at (4.5,-0.866);
	
	\coordinate[](a) at ($ (0)!.24!(2)$);
	\coordinate[](b) at ($ (0)!.38!(2)$);
	\coordinate[](c) at ($ (0)!.62!(2)$);
	\coordinate[](d) at ($ (0)!.76!(2)$);
	
	\coordinate[](j) at ($ (0)!.24!(1)$);
	\coordinate[](k) at ($ (0)!.38!(1)$);
	\coordinate[](l) at ($ (0)!.62!(1)$);
	\coordinate[](m) at ($ (0)!.76!(1)$);
	
	\coordinate[](n) at ($(1)!.24!(2)$);
	\coordinate[](o) at ($(1)!.38!(2)$);
	\coordinate[](p) at ($(1)!.62!(2)$);
	\coordinate[](q) at ($(1)!.76!(2)$);
	
	\coordinate[](r) at ($(1)!.14!(2)$);
	\coordinate[](s) at ($(1)!.86!(2)$);
	\coordinate[](t) at (3.33,0.18);
	\coordinate[](u) at (0.67,0.18);
	
	\coordinate[](v) at ($(0)!.14!(2)$);
	\coordinate[](w) at ($(0)!.86!(2)$);
	\coordinate[](x) at (3.5,0.48);
	\coordinate[](y) at (2.17,2.795);
	
	\coordinate[](z) at ($(0)!.14!(1)$);
	\coordinate[](aa) at ($(0)!.86!(1)$);
	\coordinate[](ab) at (0.5,0.48);
	\coordinate[](ac) at (1.83,2.795);
	
	\filldraw[fill=gray, opacity=0.5] (m) -- (1) -- (n)--cycle;
	\filldraw[fill=gray, opacity=0.5] (a) -- (0) -- (j)--cycle;
	\filldraw[fill=gray, opacity=0.5] (q) -- (2) -- (d)--cycle;
	
	\filldraw[fill=gray, opacity=0.5] (r) -- (s) -- (t) -- (u) --cycle;
	\filldraw[fill=gray, opacity=0.5] (v) -- (w) -- (x) -- (y) --cycle;
	\filldraw[fill=gray, opacity=0.5] (z) -- (aa) -- (ab) -- (ac) --cycle;
	
	\filldraw[fill=gray, opacity=0] (1) -- (2) -- (0)--cycle;
	
	\draw [thick] (m) -- (1) -- (n) node []{};
	\draw [thick] (11) -- (1) -- (12) node []{};
	
	\draw [thick] (a) -- (0) -- (j) node []{};
	\draw [thick] (01) -- (0) -- (02) node []{};
	
	\draw [thick] (q) -- (2) -- (d) node []{};
	\draw [thick] (21) -- (2) -- (22) node []{};
	
	\path[clip, draw] (1) -- (2) -- (0) -- cycle;
	\end{tikzpicture}\\
	Fig. 2
\end{figure}

 We thus extend $\Sigma'$ to $\abb^2$ as follows. Define $\Sigma'|_{A_i}$ to be constant for $i = 0$, $1$, $2$ (see Fig. 3). Define $\Sigma'|_{B_i}$ to be constant along any ray from the vertex $(i)$ (see Fig. 3). Then, we can easily see that $\Sigma': \abb^2\to X$ is the desired solution of the second extension problem. 
 \begin{figure}[H]
 	\centering
  \begin{tikzpicture}[thick, baseline=0pt]
   \coordinate[label=above left:$(1)$]  (1) at (0,0);
   \coordinate[label=above right:$(2)$] (2) at (4,0);
   \coordinate[label=left:$(0)$] (0) at (2,3.464);

   \coordinate[label=above:$A_0$] (A0) at (2.0,3.8);
   \coordinate[label=below left:$A_1$] (A1) at (-0.2,0);
   \coordinate[label=below right:$A_2$] (A2) at (4.2,0);

   \coordinate[label=left:$B_0$] (B0) at (2.3,-0.6);
   \coordinate[label=right:$B_1$]  (B1) at (4,1.732);
   \coordinate[label=left:$B_2$]  (B2) at (0,1.732);
   
   \coordinate[label=below: $\text{Fig. 3}$] (fig) at (2.0, -1);

   \coordinate[](11) at (-1,0);
   \coordinate[](12) at (-0.5,-0.866);
   \coordinate[](01) at (1.5,4.33); 
   \coordinate[](02) at (2.5,4.33);
   \coordinate[](21) at (5,0);
   \coordinate[](22) at (4.5,-0.866);
  
   \coordinate[](a) at ($ (0)!.24!(2)$);
   \coordinate[](b) at ($ (0)!.38!(2)$);
   \coordinate[](c) at ($ (0)!.62!(2)$);
   \coordinate[](d) at ($ (0)!.76!(2)$);
		
   \coordinate[](j) at ($ (0)!.24!(1)$);
   \coordinate[](k) at ($ (0)!.38!(1)$);
   \coordinate[](l) at ($ (0)!.62!(1)$);
   \coordinate[](m) at ($ (0)!.76!(1)$);
		
   \coordinate[](n) at ($(1)!.24!(2)$);
   \coordinate[](o) at ($(1)!.38!(2)$);
   \coordinate[](p) at ($(1)!.62!(2)$);
   \coordinate[](q) at ($(1)!.76!(2)$);
		
   \filldraw[fill=gray, opacity=0.3] (1) -- (2) -- (0)--cycle;

   \filldraw[fill=gray] (m) -- (1) -- (n)--cycle;
   \filldraw[fill=gray] (a) -- (0) -- (j)--cycle;
   \filldraw[fill=gray] (q) -- (2) -- (d)--cycle;

   \draw [thick, line width=0.5mm] (m) -- (1) -- (n) node []{};
   \draw [thick, line width=0.5mm] (11) -- (1) -- (12) node []{};
   
   \draw [thick, line width=0.5mm] (a) -- (0) -- (j) node []{};
   \draw [thick, line width=0.5mm] (01) -- (0) -- (02) node []{};

   \draw [thick, line width=0.5mm] (q) -- (2) -- (d) node []{};
   \draw [thick, line width=0.5mm] (21) -- (2) -- (22) node []{};
		
   \path[clip, draw] (1) -- (2) -- (0) -- cycle;
  \end{tikzpicture}
 \end{figure}
\end{proof}
 
 Let us see that $\omega$ can be chosen as the continuous map corresponding to a single tame $1$-simplex $\sigma$ of $S^\dcal_{\rm aff}(X)$. For this, we first consider the extension problem in $\dcal$
 \begin{center}
	\begin{tikzcd}[column sep=4em]
	\Lambda^2_1 \arrow[r, "\sigma_2|_{\Delta^1}+\sigma_1|_{\Delta^1}"] \arrow[d, hook'] & X\\
	\Delta^2 \arrow[ru, dashed]
	\end{tikzcd}
\end{center}
Then, we can use the continuous retraction $r:\Delta^2 \longrightarrow \Lambda^2_1$ depicted in Fig. 1 to construct a solution $\Sigma: \Delta^2 \longrightarrow X$ such that the composite $\Delta^1 \xrightarrow{\ \ d^1\ \ } \Delta^2 \xrightarrow{\ \ \Sigma\ \ } X$ is constant near each vertex. Define the tame $1$-simplex $\eta$ of $S^\dcal_{\rm aff}(X)$ by $\eta|_{\Delta^1}=\Sigma\circ d^1$ and consider the extension problem in $\dcal$
 \begin{center}
	\begin{tikzcd}[column sep=4em]
	\cup\: d^i \abb^1 \arrow[r,"\sigma_2 + \eta + \sigma_1"] \arrow[d, hook']& X\\
	\abb^2 \arrow[ru, dashed]
	\end{tikzcd}
\end{center}
Since this extension problem has a solution (see Lemma \ref{lem3.3}), $\omega$ can be chosen as the concatenation of $\eta,\sigma_3,\ldots,\sigma_l$. By iterating this procedure, we may assume that $\omega$ is the continuous map corresponding to a single tame $1$-simplex $\sigma$ of $S^\dcal_{\rm aff}(X)$.

Last, let us see that $\omega$ is null homotopic rel $\dot{\Delta}^1_{\rm top}$. Since $X$ is $1$-connected, the extension problem in $\dcal$
 \begin{center}
	\begin{tikzcd}[column sep=4em]
	\dot{\Delta}^2 \arrow[r, "\sigma|_{\Delta^1} + 0 + 0"] \arrow[d, hook'] & X\\
	\Delta^2 \arrow[ru,dashed]
	\end{tikzcd}
\end{center}
has a solution (see Theorem \ref{homotopygp}). Hence, the extension problem in $\dcal$
 \begin{center}
	\begin{tikzcd}[column sep=4em]
	\cup\: d^i \abb^1 \arrow[r, "\sigma+0+0"] \arrow[d, hook'] & X\\
	\abb^2 \arrow[ru, dashed]
	\end{tikzcd}
\end{center}
also has a solution (Lemma \ref{lem3.3}), which shows that $\omega$ is null homotopic rel $\dot{\Delta}^1_{\rm top}$.

\subsection{Proofs of Theorem \ref{thm1.1} and Corollary \ref{cor1.2}}
In this subsection, we complete the proofs of Theorem \ref{thm1.1} and Corollary \ref{cor1.2}.

\begin{proof}[Proof of Theorem \ref{thm1.1}]
	The proof of the main statement is given in Sections \ref{section3.2}-\ref{section3.3}. Since $S^{\dcal}(X)$ is always fibrant (Corollary \ref{lbryhorn}(2)), the last statement is obvious.
\end{proof}

Let $\scal_{\ast}$ denote the category of pointed simplicial sets, and let $\scal_{\ast f}$ denote the full subcategory of $\scal_{\ast}$ consisting of fibrant objects (i.e., pointed Kan complexes). Choosing a fibrant approximation functor $R: \scal_{\ast}\longrightarrow \scal_{\ast f}$, we define the $i$-th homotopy group functor $\pi_i: \scal_{\ast}\to Gr$ to be the composite 
\[
\scal_{\ast}\xrightarrow{\ \ R \ \ } \scal_{\ast f}\xrightarrow{\ \ \pi_i\ \ } Gr.
\]
(Strictly speaking, $\pi_0$ is defined as a $Set_\ast$-valued functor, where $Set_\ast$ denotes the category of pointed sets.) Then, up to natural isomorphisms, the functor $\pi_i:\scal_{\ast}\longrightarrow Gr$ extends the original homotopy group functor $\pi_i: \scal_{\ast f} \longrightarrow Gr$ and the extension $\pi_i:\scal_{\ast} \longrightarrow Gr$ is independent of the choice of $R$. Further, we can see that if a fibrant approximation $K \longrightarrow K'$ and a point $k$ of $K$ are given, then $\pi_i(K,k)$ is canonically isomorphic to the $i$-th homotopy group of the pointed Kan complex $(K',k)$.
\begin{proof}[Proof of Corollary \ref{cor1.2}]
	The result follows immediately from Theorems \ref{thm1.1} and \ref{homotopygp}.
\end{proof}
\section{Diffeological principal bundles}\label{section3.4}
In this section, we recall the notions of a diffeological principal bundle and a simplicial principal bundle (Section 5.1) and establish Theorem \ref{bdletriviality}, which characterizes diffeological principal bundles using the singular functor $S^\dcal_{\rm aff}$ (Section 5.2).

\subsection{Diffeological and simplicial principal bundles}
In this subsection, we recall the three notions of principal bundles in $\dcal$; the weakest notion is due to Iglesias-Zemmour (see Definition \ref{diffbdle}(2)). We also make a brief review on simplicial principal bundles.

Let $\ccal$ be a category with finite products, and $G$ a group in $\ccal$. Then, $\ccal G$ denotes the category of right $G$-objects of $\ccal$. For $B\in \ccal$, $\ccal G/B$ denotes the category of objects of $\ccal G$ over $B$, where $B$ is regarded as an object of $\ccal G$ with trivial $G$-action.
\begin{defn}\label{diffbdle}
	Let $G$ be a diffeological group, and $X$ a diffeological space.
	\begin{itemize}
		\item[{\rm (1)}] An object $\pi:E\longrightarrow X$ of $\dcal G/X$ is called a {\it locally trivial principal $G$-bundle} if there exists an open cover $\{U_i\}$ of $X$ such that for each $i$, a pullback diagram in $\dcal$
		\begin{center}
			\begin{tikzcd}
			U_i\times G \arrow[r, hook] \arrow[d, "proj" swap] & E \arrow[d, "\pi"]\\
			U_i \arrow[r, hook] & X
			\end{tikzcd}
		\end{center}
	with equivariant upper arrow exists; such an open cover $\{U_i\}$ is called a {\it trivialization open cover} of $\pi: E\longrightarrow X$.
	\hspace{1.0em}An object $\pi:E\longrightarrow X$ of $\dcal G/X$ is called a {\it $\dcal$-numerable principal $G$-bundle} if $\pi$ admits a $\dcal$-numerable trivialization open cover (i.e., a trivialization open cover $\{U_i\}$ which admits a smooth partition of unity subordinate to it).
	\item[{\rm (2)}] An object $\pi:E\longrightarrow X$ of $\dcal G/X$ is called a {\it diffeological principal $G$-bundle} if for any plot $p:U\longrightarrow X$, the pullback $p^\ast E\longrightarrow U$ is a locally trivial principal $G$-bundle.
	\item[{\rm (3)}] A morphism between locally trivial (or diffeological) principal $G$-bundles $\pi:E\longrightarrow X$ and $\pi':E'\longrightarrow X'$ is a commutative diagram in $\dcal G$ of the form
	\begin{eqnarray}
		\begin{tikzcd}
		E \arrow[r, "\hat{f}"] \arrow[d, "\pi" swap] & E' \arrow[d, "\pi'"]\\
		X \arrow[r, "f"] & X'.
		\end{tikzcd}
	\end{eqnarray}
	Note that (5.1) is necessarily a pullback diagram in $\dcal$ (see \cite[8.13 Note 2]{IZ}). The categories of locally trivial principal $G$-bundles, $\dcal$-numerable principal $G$-bundles, and diffeological principal $G$-bundles are denoted by $\mathsf{P}\dcal G$, $\mathsf{P}\dcal G_{\rm num}$, and $\mathsf{P}\dcal G_{\rm diff}$, respectively.
	\end{itemize}
\end{defn}
 We have the obvious fully faithful embeddings
 \[
 	\mathsf{P}\dcal G_{\rm num} \longhookrightarrow \mathsf{P}\dcal G \longhookrightarrow \mathsf{P}\dcal G_{\rm diff}.
 \]
 We see from the following examples that the two inclusions are proper. Recall from \cite[8.15]{IZ} that for a diffeological group $G$ and its diffeological subgroup $H$, the quotient map $\pi: G \longrightarrow G/H$ is a diffeological principal $H$-bundle.
\begin{exa}\label{proper}
 \begin{itemize}
  \item[{\rm (1)}] Let $\gamma: \zbb^m \longrightarrow \rbb^n$ be a monomorphism of abelian groups with $\Gamma:= \gamma(\zbb^m)$ dense. Then, the quotient diffeological group $T_\Gamma = \rbb^n/\Gamma$ is called an \textit{irrational torus}.
Since the underling topology of $T_{\Gamma}$ is indiscrete, the diffeological principal $\zbb^{m}$-bundle $\pi: \rbb^n \longrightarrow T_{\Gamma}$ is not locally trivial. 
  \item[{\rm (2)}] Christensen and Wu constructed a nontrivial locally trivial principal $\rbb^{>0}$-bundle $\pi: P \longrightarrow X$ with $X \simeq_{\dcal} \ast$ (see \cite[Example 3.12]{CW17}). By \cite[Theorem 5.10]{CW17}, the locally trivial principal $\rbb^{>0}$-bundle $\pi$ is not $\dcal$-numerable.
 \end{itemize}
\end{exa}

To study diffeological principal bundles, we also need the  notion of a simplicial principal bundle \cite[Chapter IV]{May}.

\begin{defn}\label{simplicialbdle}
	Let $H$ be a simplicial group, and $K$ a simplicial set.
\begin{itemize}
	\item[{\rm (1)}] An object $\pi:E\longrightarrow K$ of $\scal H/K$ is called a {\it principal $H$-bundle} if for any map $k:\Delta[p]\longrightarrow K$, there exists a pullback diagram
	\begin{eqnarray*}
\begin{tikzcd}[column sep=4em]
\Delta[p]\times H \arrow[r, "\hat{k}"] \arrow[d, "proj" swap] & E \arrow[d]\\
\Delta [p] \arrow[r, "k"] & K
\end{tikzcd}
\end{eqnarray*}
	with $\hat{k}$ equivariant.
	\item[{\rm (2)}] A morphism between principal $H$-bundles $\pi:E\longrightarrow K$ and $\pi':E'\longrightarrow K'$ is a commutative diagram in $\scal H$ of the form
	\begin{eqnarray}
		\begin{tikzcd}
			E \arrow[r, "\hat{f}"] \arrow[d, "\pi" swap] & E' \arrow[d, "\pi'"]\\
			K \arrow[r, "f"] & K'.
		\end{tikzcd}
	\end{eqnarray}
	Note that (5.2) is necessarily a pullback diagram in $\scal$. The category of principal $H$-bundles are denoted by $\mathsf{P}\scal H$.
\end{itemize}
\end{defn}

\begin{rem}\label{principal}
	An object $\pi:E\longrightarrow K$ of $\scal H/K$ is a principal $H$-bundle if and only if the action of $H$ on $E$ is free and $\pi$ induces the isomorphism $E/H \overset{\bar{\pi}}{\underset{\cong}{\longrightarrow}}K$ (cf. \cite[Definition 18.1]{May}).
\end{rem}

 Let $\cdot_0 : \scal \longrightarrow Set$ denote the 0th component functor, which is naturally isomorphic to the functor $\scal(\Delta[0], \cdot)$. The following simple result is used in the proof of Theorem \ref{bdletriviality}.
\begin{lem}\label{zero}
	\begin{itemize}
		\item[{\rm (1)}] The composite
		\[
			\dcal \overset{S^\dcal_{\rm aff}}{\longrightarrow} \scal \overset{\cdot_0}{\longrightarrow} Set
		\]
		is naturally isomorphic to the underlying set functor for $\dcal$.
		\item[{\rm (2)}] The functor $\cdot_0: \scal \longrightarrow Set$ is a right adjoint.
	\end{itemize}
	\begin{proof}
		\begin{itemize}
			\item[{\rm (1)}] Obvious.
			\item[{\rm (2)}] Define the functor $d: Set \longrightarrow \scal$ to assign to a set $A$ the discrete simplicial set whose 0th component is $A$. Then, we can easily see that $(d, \cdot_0)$ is an adjoint pair.
		\end{itemize}
	\end{proof}
\end{lem}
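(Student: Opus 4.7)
The plan is to handle the two parts separately by elementary direct checks.

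For part (1), I observe that $\abb^0 = \{(1)\} \subset \rbb^1$ is a one-point set, and, equipped with the sub-diffeology of $\rbb^1$, is canonically identified with the terminal object of $\dcal$ (the only parametrizations of a singleton are the constant ones, and these are always plots by the covering axiom). Consequently, for any diffeological space $X$, evaluation at the unique point of $\abb^0$ gives a bijection
\[
S^\dcal_{\rm aff}(X)_0 = \dcal(\abb^0, X) \xrightarrow{\ \cong\ } X,
\]
where the right-hand side denotes the underlying set of $X$; naturality in $X$ is manifest.

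For part (2), the plan is to exhibit an explicit left adjoint. Define $d: Set \longrightarrow \scal$ by sending a set $A$ to the constant simplicial set $dA$ with $(dA)_n = A$ for all $n \ge 0$ and every face and degeneracy map the identity on $A$ (the simplicial identities hold trivially); equivalently, $dA = \coprod_{a \in A} \Delta[0]$, a coproduct of copies of the standard $0$-simplex. This is functorial in $A$, and clearly $(dA)_0 = A$. Using the Yoneda identification $\scal(\Delta[0], K) \cong K_0$, one then has the chain of natural bijections
\[
\scal(dA, K) \cong \prod_{a \in A} \scal(\Delta[0], K) \cong Set(A, K_0),
\]
natural in both $A$ and $K$. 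This exhibits $(d, \cdot_0)$ as an adjoint pair, and in particular shows that $\cdot_0$ is a right adjoint.

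There is no genuine obstacle in either part: (1) is immediate once one notes that $\abb^0$ is terminal in $\dcal$, and (2) is a purely formal manipulation based on the Yoneda lemma applied to $\Delta[0]$. Both statements are essentially bookkeeping from the definitions.
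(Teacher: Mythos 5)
Your proposal is correct and follows the same route as the paper: part (1) is the observation that $\abb^0$ is terminal in $\dcal$ (which the paper dismisses as obvious), and part (2) exhibits the same discrete simplicial set functor $d$ as left adjoint, with the Yoneda/coproduct computation merely filling in the details the paper leaves to the reader.
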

For a given set $A$, the discrete simplicial set $d A$ is usually denoted by $A$.

\subsection{Proof of Theorem \ref{bdletriviality}}
In this subsection, we prove Theorem \ref{bdletriviality}; we begin by proving the ``only if" part of Part 1, and Part 2, and then prove the ``if" part of Part 1.

Recall that $S^\dcal_{\rm aff}$ is a right adjoint (Remark \ref{extra}(1)). Then, we see that $S^\dcal_{\rm aff}(G)$ is a simplicial group and that $S^\dcal_{\rm aff}(\pi):S^\dcal_{\rm aff}(P)\longrightarrow S^\dcal_{\rm aff}(X)$ is an object of $\scal S^\dcal_{\rm aff}(G)/ S^\dcal_{\rm aff}(X)$.

	\begin{proof}[Proof of the ``only if" part of Theorem \ref{bdletriviality}(1)]
		Assume given a map $k:\Delta[p] \longrightarrow S^\dcal_{\rm aff}(X)$ and let $\kappa: \abb^p \longrightarrow X$ be the smooth map corresponding to $k$. Then, we have a pullback diagram in $\dcal$
		\begin{center}
			\begin{tikzcd}
				\abb^p \times G \arrow[r] \arrow[d, "proj" swap] & P \arrow[d, "\pi"]\\
				\abb^p \arrow[r, "\kappa"] & X
			\end{tikzcd}
		\end{center}
		with equivariant upper arrow (see \cite[8.19]{IZ}). Note that $S^\dcal_{\rm aff}$ is a right adjoint and consider the commutative diagram in $\scal$ consisting of two pullback squares with equivariant upper arrows
		\begin{center}
			\begin{tikzcd}
				\Delta[p]\times S^\dcal_{\rm aff}(G) \arrow[r] \arrow[d, "proj" swap] & S^\dcal_{\rm aff}(\abb^p)\times S^\dcal_{\rm aff}(G) \arrow[r] \arrow[d, "proj" swap] & S^\dcal_{\rm aff}(P)\arrow[d, "S^\dcal_{\rm aff}(\pi)" swap]\\
				\Delta[p] \arrow[r] & S^\dcal_{\rm aff}(\abb^p) \arrow[r, "S^\dcal_{\rm aff}(\kappa)"] & S^\dcal_{\rm aff}(X),
			\end{tikzcd}
		\end{center}
		where $\Delta[p]\longrightarrow S^\dcal_{\rm aff}(\abb^p)$ is the map corresponding to the $p$-simplex $1_{\abb^p}$ of $S^\dcal_{\rm aff}(\abb^p)$. Then, the outer rectangle gives the desired local triviality of $S^\dcal_{\rm aff}(\pi)$ (see \cite[Exercise 8 on page 72]{Mac}).
	\end{proof}
	\begin{proof}[Proof of Theorem \ref{bdletriviality}(2).]
	Noting that $S^\dcal_{\rm aff}$ is a right adjoint, we see from Part 1 that $S^{\dcal}_{\rm aff}$ induces a functor from $\mathsf{P} \dcal G_{\rm diff}$ to $\mathsf{P} \scal S^\dcal_{\rm aff}(G)$. The faithfulness of the functor follows from Lemma \ref{zero}(1).
	\end{proof}

\begin{rem}\label{notfull}
	The functor $S^\dcal_{\rm aff}: \mathsf{P} \dcal G_{\rm diff} \longrightarrow \mathsf{P} \scal S^\dcal_{\rm aff}(G)$ need not be fully faithful. In fact, let $\pi:P \longrightarrow X$ be the locally trivial principal $\rbb^{>0}$-bundle in Example \ref{proper}(2), and let $\pi':P' \longrightarrow X$ be the trivial principal $\rbb^{>0}$-bundle. Since $X \simeq_{\dcal} \ast$, the diagram in $\dcal$ \vspace{-1.5ex}

\[
\begin{tikzpicture}
\node at (0,0)
{\begin{tikzcd}[column sep=huge]
	X \arrow[r] & \ast \arrow[hookrightarrow]{r}{x} &  X
	\end{tikzcd}};
\draw[->] (-2.2,-0.4)--(-2.2,-0.7)--(2.2,-0.7)--(2.2,-0.4);
\node [right] at (0,-1) {1x};
\end{tikzpicture}
\]\vspace{-4ex}

\noindent is commutative up to homotopy for  $x \in X$. Thus, by Lemma \ref{homotopypreserving}, the diagram in $\scal$\vspace{-1.5ex}

\[
\begin{tikzpicture}
\node at (0,0)
{\begin{tikzcd}[column sep=large]
	S^{\dcal}_{\rm aff}(X) \arrow[r] & \ast \arrow[hookrightarrow]{r}{x} & S^{\dcal}_{\rm aff}(X)
	\end{tikzcd}};
\draw[->] (-2.2,-0.4)--(-2.2,-0.7)--(2.2,-0.7)--(2.2,-0.4);
\node [right] at (0,-1) {$1_{S^{\dcal}_{\rm aff}(X)}$};
\end{tikzpicture}
\]\vspace{-4ex}

\noindent is also commutative up to homotopy. Hence, both $S^\dcal_{\rm aff}(P)$ and $S^\dcal_{\rm aff}(P')$ are trivial principal $S^\dcal_{\rm aff}(\rbb^{>0})$-bundles (see \cite[Corollary 20.6]{May}), which shows that $S^\dcal_{\rm aff}: \mathsf{P} \dcal \rbb^{>0} \longrightarrow \mathsf{P} \scal S^\dcal_{\rm aff}(\rbb^{>0})$ is not fully faithful. (From this argument, we also see that the faithful functor $S^{\dcal}_{\rm aff}: \dcal \longrightarrow \scal$ is not fully faithful.)
\end{rem}

Next, we prove the following lemma, which is used in the proof of ``if" part of Theorem \ref{bdletriviality}(1).

\begin{lem}\label{translation}
	Let $\pi: P \longrightarrow X$ be an object of $\dcal G/X$. Then, $\pi:P\longrightarrow X$ is a diffeological principal $G$-bundle if and only if $\pi$ satisfies the following conditions:
	\begin{itemize}
		\item[{\rm (i)}] $G$ acts on $P$ freely and $\pi:P\longrightarrow X$ induces a bijection $P/G\longrightarrow X$.
		\item[{\rm (ii)}] Given a solid arrow diagram in $\dcal$ \vspace{-2ex}
		\begin{center}
			\begin{tikzcd}
			& P \arrow[d, "\pi"]\\
			\abb^p \arrow[ur, dashed] \arrow[r, "\kappa"] & X,
			\end{tikzcd}
		\end{center}\vspace{-1ex}
		there exists a dotted arrow, making the diagram commute.
		\item[{\rm (iii)}] The translation function $\tau: P \underset{X}{\times} P \longrightarrow G$, defined by $u\cdot \tau(u,v) = v$, is smooth.
	\end{itemize}
	\begin{proof}
	  ($\Rightarrow$) {\sl Condition} (i). Obvious.\vspace{-2ex}\\

          	\noindent {\sl Condition} (ii). By \cite[8.19]{IZ},
		\begin{eqnarray}
		\kappa^\ast P \cong \abb^p \times G \text{ in } \dcal G /\abb^p
		\end{eqnarray}
		holds. Hence, $\pi$ satisfies condition (ii).\vspace{-2ex}\\
                
		\noindent {\sl Condition} (iii). We have only to show that $\tau: P \underset{X}{\times} P \longrightarrow G$ preserves global plots.
		
		Assume given a global plot $f: \abb^p \longrightarrow P \underset{X}{\times} P$. Since the components $f_1, f_2$ of $f$ are global plots of $P$ with $\pi \circ f_1 = \pi \circ f_2$, we have only to show that the composite
		\[
		\kappa^\ast P \underset{\abb^p}{\times} \kappa^\ast P \longrightarrow P \underset{X}{\times} P \xrightarrow{\ \ \tau\ \ } G
		\]
		is smooth, where $\kappa:= \pi\circ f_1 = \pi \circ f_2$. By (5.3), we have the identifications
		\[
		\kappa^\ast P \underset{\abb^p}{\times} \kappa^\ast P \cong (\abb^p \times G) \underset{\abb^p}{\times} (\abb^p \times G) \cong \abb^p \times G \times G,
		\]
		under which the composite $\kappa^\ast P \underset{\abb^p}{\times} \kappa^\ast P \longrightarrow P \underset{X}{\times} P \overset{\tau}{\longrightarrow} G$ is just the smooth map
		\[
		\abb^p \times G \times G \longrightarrow G
		\]
		sending $(x, g, h)$ to $g^{-1} h$.\vspace{-2ex}\\
                
		\noindent ($\Leftarrow$) Assume given a smooth map $\kappa: \abb^p \longrightarrow X$. By condition (ii), we can choose a section $\sigma$ of  the pullback $\kappa^\ast P \overset{\hat{\pi}}{\longrightarrow} \abb^p$ of $P\overset{\pi}{\longrightarrow} X$ along $\kappa$. Define the maps
		\[
		\abb^p \times G \overset{\phi_\kappa}{\underset{\psi_\kappa}{\rightleftarrows}} \kappa^\ast P
		\]
		by $\phi_\kappa(x, g) = \sigma(x)\cdot g$ and $\psi_\kappa(u)=(\hat{\pi}(u), \tau (\sigma(\hat{\pi}(u)), u))$, respectively. Then, we see that $\phi_\kappa$ and $\psi_\kappa$ are mutually inverses in $\dcal G/\abb^p$.
	\end{proof}
\end{lem}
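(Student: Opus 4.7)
The plan is to prove the biconditional by establishing each direction separately, with the explicit trivialization built from a section and the translation function $\tau$ doing the main work in the harder direction.

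For the forward implication ($\Rightarrow$), assume that $\pi: P \to X$ is a diffeological principal $G$-bundle. Condition (i) is immediate, since local triviality along any plot forces the $G$-action to be free and each fiber of $\pi$ to be a single $G$-orbit. For condition (ii), one uses that $\abb^p$ is smoothly contractible (Lemma~\ref{simplex}(1)) together with the Iglesias-Zemmour fact that a diffeological principal $G$-bundle over a smoothly contractible base is trivial; this yields $\kappa^\ast P \cong \abb^p \times G$ in $\dcal G/\abb^p$, and composing any section with the canonical map to $P$ furnishes the required lift. For condition (iii), one checks that $\tau$ preserves global plots: given $f = (f_1, f_2): \abb^p \to P \times_X P$ with $\kappa := \pi f_1 = \pi f_2$, the composite $\kappa^\ast P \times_{\abb^p} \kappa^\ast P \to G$ reads, in the trivialization $\kappa^\ast P \cong \abb^p \times G$, as the smooth map $(x,g,h) \mapsto g^{-1}h$.

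For the reverse implication ($\Leftarrow$), assume conditions (i)--(iii), and fix a plot $p: U \to X$ with $U$ open in $\rbb^n$ and a point $u_0 \in U$. Choose a small open ball $V \subseteq U$ containing $u_0$ and a diffeomorphism $\phi: \abb^n \xrightarrow{\cong} V$, and set $\kappa = p \circ \phi: \abb^n \to X$. Condition (ii) supplies a section $\sigma$ of the pullback $\hat\pi: \kappa^\ast P \to \abb^n$, and we define
\[
\phi_\kappa(x, g) = \sigma(x) \cdot g, \qquad \psi_\kappa(u) = \bigl(\hat\pi(u),\, \tau(\sigma(\hat\pi(u)), u)\bigr).
\]
The map $\phi_\kappa$ is smooth and $G$-equivariant by inspection; the freeness and quotient bijection from (i) show that $\phi_\kappa$ and $\psi_\kappa$ are mutually inverse as set maps; and the smoothness of $\tau$ from (iii) ensures that $\psi_\kappa$ is smooth. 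Thus $\kappa^\ast P \cong \abb^n \times G$ in $\dcal G/\abb^n$, and transporting this trivialization along $\phi$ yields a trivialization of $(p|_V)^\ast P$, so $p^\ast P \to U$ is locally trivial at $u_0$.

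The main obstacle I anticipate is the inverse check for $\phi_\kappa$ and $\psi_\kappa$: showing that $\psi_\kappa \circ \phi_\kappa = \mathrm{id}$ rests on the identity $\sigma(x) \cdot \tau(\sigma(x), \sigma(x)\cdot g) = \sigma(x)\cdot g$ followed by freeness, while $\phi_\kappa \circ \psi_\kappa = \mathrm{id}$ uses the defining property of $\tau$ directly. This is where all three hypotheses have to work in concert: (i) for set-theoretic invertibility, (ii) for existence of $\sigma$, and (iii) for smoothness of $\psi_\kappa$. A secondary subtlety, bridging the $\abb^p$-formulation of (ii)--(iii) to the $U$-formulation of the definition of a diffeological principal bundle, is handled by the open-ball diffeomorphism $\phi: \abb^n \xrightarrow{\cong} V$ above.
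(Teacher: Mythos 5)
Your proposal is correct and follows essentially the same route as the paper: the forward direction invokes the Iglesias-Zemmour triviality of a diffeological principal bundle over the contractible base $\abb^p$ and reads off $\tau$ as $(x,g,h)\mapsto g^{-1}h$ in that trivialization, and the reverse direction builds the mutually inverse maps $\phi_\kappa$, $\psi_\kappa$ from a section and the translation function exactly as in the paper. The only difference is that you make explicit the reduction from triviality over $\abb^n$ to local triviality of $p^{\ast}P\to U$ via open balls diffeomorphic to $\abb^n$, a step the paper leaves implicit.
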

We give a proof of the ``if" part of Theorem \ref{bdletriviality}(1), completing the proof of Theorem \ref{bdletriviality}.
\begin{proof}[Proof of the ``if" part of Theorem \ref{bdletriviality}(1)]
We have only to show that $\pi:P\longrightarrow X$ satisfies conditions (i)-(iii) in Lemma \ref{translation}. Throughout this proof, bear the following in mind: For a diffeological space $Z$,
\begin{itemize}
	\item $S^\dcal_{\rm aff}(Z)_0$ is just the set $Z$.
	\item $S^\dcal_{\rm aff}(Z)$ can be regarded as the set of global plots of $Z$.
\end{itemize}
Recall also that $S^\dcal_{\rm aff}$ is a right adjoint (see Remark \ref{extra}(1)).\vspace{-2.0ex}\\

\noindent {\sl Condition} (i). Consider the pullback diagram in $\dcal$
\begin{center}
	\begin{tikzcd}
	P_x \arrow[r] \arrow[d] & P \arrow[d, "\pi"]\\
	\{x\} \arrow[r] & X
	\end{tikzcd}
\end{center}
for $x\in X$. By applying the singular functor $S^\dcal_{\rm aff}$, we have the pullback diagram in $\scal$
\begin{center}
	\begin{tikzcd}
	S^\dcal_{\rm aff}(P_x) \arrow[r] \arrow[d] & S^\dcal_{\rm aff}(P) \arrow[d, "{S^\dcal_{\rm aff}} (\pi)"]\\
	\Delta[0] \arrow[r] & S^\dcal_{\rm aff}(X).
	\end{tikzcd}
\end{center}
Since $S^\dcal_{\rm aff}(\pi)$ is a principal $S^\dcal_{\rm aff}(G)$-bundle, $S^\dcal_{\rm aff}(P_x) \cong S^\dcal_{\rm aff}(G)$ in $\scal S^\dcal_{\rm aff}(G)$, and hence $P_x \cong G$ in $Set\: G$ holds (see Lemma \ref{zero}), which shows that $\pi$ satisfies condition (i).\vspace{-2.0ex}\\

\noindent {\sl Condition} (ii). Consider the pullback diagram in $\dcal$
\begin{center}
	\begin{tikzcd}
	\kappa^\ast P \arrow[r] \arrow[d] & P \arrow[d, "\pi"]\\
	\abb^p \arrow[r, "\kappa"] & X
	\end{tikzcd}
\end{center}
and let $k$ denote the simplicial map $\Delta[p] \longrightarrow S^\dcal_{\rm aff}(X)$ corresponding to $\kappa$. Then, we have the commutative diagram in $\scal$ consisting of two pullback squares
\begin{center}
\begin{tikzcd}
k^\ast S^\dcal_{\rm aff}(P) \arrow[r] \arrow[d] & S^\dcal_{\rm aff}(\kappa^\ast P) \arrow[d] \arrow[r] & S^\dcal_{\rm aff}(P)\arrow[d, "S^\dcal_{\rm aff}(\pi)"]\\
\Delta[p] \arrow[r] & S^\dcal_{\rm aff}(\abb^p) \arrow[r, "S^\dcal_{\rm aff}(\kappa)"] & S^\dcal_{\rm aff}(X),
\end{tikzcd}
\end{center}
where $\Delta[p]\longrightarrow S^\dcal_{\rm aff}(\abb^p)$ is the simplicial map corresponding to the $p$-simplex $1_{\abb^p}$ of $S^\dcal_{\rm aff}(\abb^p)$ (see \cite[Exercise 8 on page 72]{Mac}). Since $S^\dcal_{\rm aff}(\pi)$ is a simplicial $S^\dcal_{\rm aff}(G)$-bundle, $k^\ast S^\dcal_{\rm aff}(P)\longrightarrow \Delta[p]$ has a section $s$. Then, the composite
\[
\Delta[p] \xrightarrow{\ \ s\ \ } k^\ast S^\dcal_{\rm aff}(P)\longrightarrow S^\dcal_{\rm aff}(P)
\]
defines the desired lifting of $\kappa$ along $\pi$.\vspace{-2.0ex}\\

\noindent {\sl Condition} (iii). We show that the map $\tau: P\underset{X}{\times} P \longrightarrow G$ preserves global plots. Assume given a global plot $f=(f_1, f_2): \abb^p \longrightarrow P \underset{X}{\times} P$. Since $f_1$ and $f_2$ are global plots of $P$ with $\pi\circ f_1 = \pi \circ f_2$, we set $\kappa = \pi\circ f_1 = \pi \circ f_2$ and let $\sigma_1$ and $\sigma_2$ denote the sections of $\kappa^\ast P \longrightarrow \abb^p$ corresponding to $f_1$ and $f_2$, respectively. Then, $\sigma_1$ and $\sigma_2$ correspond to sections of $k^\ast S^\dcal_{\rm aff}(P)\longrightarrow \Delta[p]$, which are denoted by $s_1$ and $s_2$, respectively (see the verification of condition (ii)). Since the principal $S^\dcal_{\rm aff}(G)$-bundle $k^\ast S^\dcal_{\rm aff}(P)\longrightarrow \Delta[p]$ is trivial, there exists a unique $p$-simplex $g$ of $S^\dcal_{\rm aff}(G)$ such that $s_1 \cdot g = s_2$. We thus see that the composite
\[
\abb^p \xrightarrow{\ \ f\ \ } P \underset{X}{\times} P \xrightarrow{\ \ \tau\ \ } G
\]
is just the global plot $g$.
\end{proof}

\begin{rem}\label{bdletriviality2}
\begin{itemize}
	\item[{\rm (1)}]
	Recall the notion of a diffeological fiber bundle and that of a simplicial fiber bundle from Section 3.3. We can then use the argument in the proof of the ``only if" part of Theorem \ref{bdletriviality}(1) to prove the following: If $\pi:E\longrightarrow X$ is a diffeological fiber bundle, then $S^\dcal_{\rm aff}(\pi): S^\dcal_{\rm aff}(E) \longrightarrow S^\dcal_{\rm aff}(X)$ is a simplicial fiber bundle.

\vspace{-1.0ex}\hspace{1em}This result along with Theorem \ref{thm1.1} enables us to apply the Serre spectral sequence \cite[Section 32]{May} to diffeological fiber bundles (cf. \cite[Remark 3.8(3)]{smh}).
	\item[{\rm (2)}] If we restrict ourselves to locally trivial principal $G$-bundles (resp. locally trivial fiber bundles), then the ``only if" part of Theorem \ref{bdletriviality}(1) (resp. the result stated in Part 1) remains true for the functor $S^\dcal$ (instead of $S^\dcal_{\rm aff}$) (see \cite[Corollary 5.15(1)]{smh}).

	\item[{\rm (3)}] If we restrict ourselves to diffeological coverings, then the result stated in Part 1 remains true for the functor $S^{\dcal}$ (instead of $S^{\dcal}_{\rm aff}$); see Proposition \ref{cover}. Similarly, if $G$ is discrete, then Theorem \ref{bdletriviality} remains true for the functor $S^{\dcal}$ (instead of $S^{\dcal}_{\rm aff}$). 
\end{itemize}
\end{rem}

\section{Characteristic classes of diffeological principal bundles}
In this section, we first give a  criterion for a simplicial principal bundle to be universal (Section 6.1). We then use this criterion to determine the homotopy type of $S^\dcal(X)$ for a diffeological space $X$ which admits a diffeological principal bundle with contractible total space (Proposition \ref{classifyingsp}), applying it to the classifying space $BG$ of a diffeological group $G$ and pathological diffeological spaces such as irrational tori and $\rbb/\qbb$ (Section 6.2). We use the proof of Proposition \ref{classifyingsp} along with Theorems \ref{thm1.1} and \ref{bdletriviality} to prove Proposition \ref{cor1.3} (Section 6.3). We end this section by discussing the sets of characteristic classes for various classes of principal bundles and their relation (Section 6.4).

\subsection{Universal simplicial principal bundles}
In this subsection, we recall the basics of universal simplicial principal bundles and give a criterion for a simplicial principal bundle to be universal.

Let $H$ be a simplicial group. A principal $H$-bundle $\varpi:E\longrightarrow L$ is called {\it universal} if $L$ is Kan (i.e., fibrant in $\scal$) and the natural map
\begin{equation*}
\begin{alignedat}{4}
&[K,L] &\longrightarrow &\left\{
\begin{alignedat}{4}
&\text{isomorphism classes of}\\
&\text{principal $H$-bundles over $K$}
\end{alignedat}
\right\}, \\
&[f]&\longmapsto &\ \ \ [f^\ast E]
\end{alignedat}
\end{equation*}
is bijective; the base $L$ of a universal principal $H$-bundle $\varpi: E \longrightarrow L$ is called a {\sl classifying complex} of $H$. By a simple argument, a classifying complex of $H$ is unique up to homotopy.
 Recall that the $W$-construction $q:WH\longrightarrow \overline{W}H$ is a universal principal $H$-bundle (\cite[Section 21]{May} or \cite[Section 4 in Chapter V]{GJ}) and that $WH$ is contractible (\cite[Proposition 21.5]{May}).

\begin{lem}\label{universalpb}
	Let $H$ be a simplicial group, and $\varpi:E\longrightarrow L$ be a principal $H$-bundle. Then, the following are equivalent:
	\begin{itemize}
		\item[{\rm (i)}] $\varpi: E \longrightarrow L$ is universal.
		\item[{\rm (ii)}] $L$ is Kan and the canonical map $E\longrightarrow \ast$ is a weak equivalence.
		\item[{\rm (iii)}] $E$ is a contractible Kan complex.
	\end{itemize}
	
	\begin{proof}
		{\rm (ii)$\Leftrightarrow$(iii)} Noticing that $H$ is Kan \cite[Theorem 17.1]{May}, we see that $L$ is Kan if and only if $E$ is Kan (see \cite[Proposition 7.5]{May}), and hence that (ii)$\Leftrightarrow$(iii).
		\item[{\rm (i)$\Leftrightarrow$(iii)}] We have only to prove that under the assumption that $L$ is Kan,
		\[
		\varpi:E \longrightarrow L \text{ is universal } \Leftrightarrow E \text{ is contractible}
		\]
		(see \cite[Proposition 7.5]{May}).
		
		Since $q: WH\longrightarrow \overline{W}H$ is universal, we have a morphism of principal $H$-bundles
		\begin{eqnarray}
		\begin{tikzcd}[column sep=4em]
		E \arrow[r] \arrow[d, "\varpi" swap] & WH \arrow[d,"q"]\\
		L \arrow[r, "\varphi"] & \overline{W}H.
		\end{tikzcd}
		\end{eqnarray}
		Note that $H$ and the four simplicial sets in (6.1) are Kan and consider the morphism between the homotopy exact sequences induced by (6.1). Then, we have the equivalences
		\begin{align*}
		\varpi: E\longrightarrow L \text{ is universal} &\Leftrightarrow \varphi:L \longrightarrow \overline{W}H \text{ is a homotopy equivalence}\\ 
		&\Leftrightarrow E \text{ is contractible.}
		\qedhere
		\end{align*}
	\end{proof}
\end{lem}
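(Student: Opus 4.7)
My plan is to establish the three equivalences in two stages: first (ii)$\Leftrightarrow$(iii), which is a formal fact about Kan complexes, and then (i)$\Leftrightarrow$(iii), which leverages the universal $W$-construction cited just before the lemma.

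For (ii)$\Leftrightarrow$(iii), I would invoke two classical facts. Every simplicial group is automatically a Kan complex (\cite[Theorem 17.1]{May}), so the fiber $H$ is Kan; and for a principal $H$-bundle with Kan fiber, the total space $E$ is Kan if and only if the base $L$ is Kan (\cite[Proposition 7.5]{May}). Hence the Kanness of $L$ is equivalent to that of $E$, and condition (ii) reduces to the statement that $E$ is Kan with trivial homotopy groups, which is exactly condition (iii).

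For (i)$\Leftrightarrow$(iii), I would use the universal principal $H$-bundle $q:WH\longrightarrow \overline{W}H$ provided by the $W$-construction, noting that $WH$ is contractible (\cite[Proposition 21.5]{May}) and that both $WH$ and $\overline{W}H$ are Kan. Assuming $L$ is Kan (built into both (i) and (iii) after the previous step), the classifying property of $q$ produces a morphism of principal $H$-bundles
\[
\begin{tikzcd}
E \arrow[r, "\tilde{\varphi}"] \arrow[d, "\varpi"'] & WH \arrow[d, "q"] \\
L \arrow[r, "\varphi"'] & \overline{W}H.
\end{tikzcd}
\]
Because $q$ is universal and classifying maps are unique up to homotopy, $\varpi$ is universal precisely when $\varphi$ induces a bijection $[K,L]\to[K,\overline{W}H]$ for every $K\in \scal$, which (both targets being Kan) is equivalent to $\varphi$ being a weak equivalence. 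Comparing the long exact sequences of homotopy groups induced by $\varpi$ and by $q$, with the identity in the $H$-column, the five-lemma then yields that $\varphi$ is a weak equivalence if and only if $\tilde{\varphi}$ is, and hence, since $WH\simeq \ast$, if and only if $E$ is contractible.

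The main obstacle I anticipate is justifying that the set-theoretic universality of (i) translates to the genuine weak equivalence $\varphi:L\to \overline{W}H$, rather than merely to $\varphi$ being a bijection on homotopy classes of maps. This step is essentially Yoneda for the homotopy category of Kan complexes applied to the functor of isomorphism classes of principal $H$-bundles: two Kan complexes representing the same functor on $\mathrm{Ho}(\scal)$ are connected by a weak equivalence. Once this is in hand, the remainder of the argument is a routine five-lemma chase that rests on the fact, used throughout, that a principal $H$-bundle with $H$ a simplicial group behaves as a Kan fibration.
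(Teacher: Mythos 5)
Your proposal is correct and follows essentially the same route as the paper: (ii)$\Leftrightarrow$(iii) via the Kanness of $H$ and \cite[Proposition 7.5]{May}, and (i)$\Leftrightarrow$(iii) via a classifying morphism to $q:WH\to\overline{W}H$ followed by a five-lemma comparison of the homotopy exact sequences. Your explicit justification that universality of $\varpi$ amounts to $\varphi_*:[K,L]\to[K,\overline{W}H]$ being bijective for all $K$, hence to $\varphi$ being a homotopy equivalence of Kan complexes, correctly fills in the step the paper leaves implicit.
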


\begin{rem}\label{reason}
Lemma \ref{universalpb} can be  regarded as a variant of \cite[Theorem 3.9 in Chapter V]{GJ}. However, we record this lemma along with its proof for the following two reasons: one reason is to avoid using the model structure on $\scal G$ (see \cite[Section V.2]{GJ}) and the other reason is to emphasize the importance of the fibrancy of the base (cf. the proof of Proposition \ref{classifyingsp}).
\end{rem}
\subsection{Diffeological principal bundles with contractible total space}
In this subsection, we determine the homotopy type of $S^\dcal(X)$ for a diffeological space $X$ which admits a diffeological principal bundle $\pi: E \longrightarrow X$ with $E$ weakly contractible. Here, a diffeological space $Z$ is called \textit{weakly contracible} if the canonical map $Z \longrightarrow \ast$ is a weak equivalence. We can easily see that the following equivalences hold:
\begin{eqnarray*}
Z \rm{\ is \ weakly\  contractible} &\Leftrightarrow& S^{\dcal}(Z) \simeq \ast\\
                           &\Leftrightarrow& \pi_{\ast}^{\dcal}(Z,z)=0 \ {\rm for \ any} \ z \in Z
\end{eqnarray*}
(see Remark \ref{modelstr}(1), Corollary \ref{lbryhorn}(2), and Theorem \ref{homotopygp}).
\begin{prop}\label{classifyingsp}
	Let $G$ be a diffeological group and $\pi: E\longrightarrow X$ a diffeological principal $G$-bundle with $E$ weakly contractible. Then, $S^\dcal(X)$ is a classifying complex of the simplicial group $S^\dcal(G)$.
	\begin{proof}
		By Theorem \ref{bdletriviality}, $S^\dcal_{\rm aff}(\pi): S^\dcal_{\rm aff}(E)\longrightarrow S^\dcal_{\rm aff}(X)$ is a principal $S^\dcal_{\rm aff}(G)$-bundle. Let us construct a principal $S^\dcal_{\rm aff}(G)$-bundle $S^\dcal_{\rm aff}(\pi)': S^\dcal_{\rm aff}(E)' \longrightarrow S^\dcal_{\rm aff}(X)\hspace{0.1em}\mathbf{\hat{}}$ (see Section 4.1) and a morphism of principal $S^\dcal_{\rm aff}(G)$-bundles
		\begin{eqnarray*}
		\begin{tikzcd}
		S^{\dcal}_{\mathrm{aff}}(E) \arrow[r, hook] \arrow[d, "S^{\dcal}_{\mathrm{aff}}(\pi)"'] & S^{\dcal}_{\mathrm{aff}}(E)'\arrow[d, "S^{\dcal}_{\mathrm{aff}}(\pi)'"] \\
		S^{\dcal}_{\mathrm{aff}}(X) \arrow[r, hook] & S^{\dcal}_{\mathrm{aff}}(X)\hspace{0.1em}\mathbf{\hat{}}.
		\end{tikzcd}
		\end{eqnarray*}
		First, choose a classifying map $\varphi_{E}:S^\dcal_{\rm aff}(X)\longrightarrow \overline{W}S^\dcal_{\rm aff}(G)$. Then, note that $\overline{W}S^\dcal_{\rm aff}(G)$ is Kan and choose an extension $\varphi'_{E}:S^\dcal_{\rm aff}(X)\hspace{0.1em}\mathbf{\hat{}} \longrightarrow \overline{W} S^\dcal_{\rm aff}(G)$. By setting $S^\dcal_{\rm aff} (E)' = {\varphi'_{E}}^\ast W S^\dcal_{\rm aff}(G)$, we then obtain the desired diagram. 
		
		Thus, we can use \cite[Theorem 4.2 in Chapter III]{GZ} to see that $S^\dcal_{\rm aff}(E)\longhookrightarrow S^\dcal_{\rm aff}(E)'$ is a weak equivalence. Noticing that $S^{\dcal}_{\rm aff}(E) \longrightarrow \ast$ is a weak equivalence (see Theorem \ref{thm1.1}), we see from Lemma \ref{universalpb} that $S^{\dcal}_{\mathrm{aff}}(\pi)': S^{\dcal}_{\mathrm{aff}}(E)'\to S^{\dcal}_{\mathrm{aff}}(X)\hspace{0.1em}\mathbf{\hat{}}$ \hspace{0.5em}is a universal principal $S^{\dcal}_{\mathrm{aff}}(G)$-bundle. Hence, $S^\dcal(X)$ is a classifying complex of $S^\dcal_{\rm aff}(G)$, and hence of $S^\dcal(G)$ (see Theorem \ref{thm1.1}).
	\end{proof}
\end{prop}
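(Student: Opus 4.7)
The plan is to transport the problem into the simplicial world via the functor $S^\dcal_{\rm aff}$ and apply the universality criterion of Lemma \ref{universalpb}. By Theorem \ref{bdletriviality}(1), $S^\dcal_{\rm aff}(\pi): S^\dcal_{\rm aff}(E) \to S^\dcal_{\rm aff}(X)$ is a simplicial principal $S^\dcal_{\rm aff}(G)$-bundle. Since $E$ is weakly contractible and Theorem \ref{thm1.1} gives $S^\dcal_{\rm aff}(E) \simeq S^\dcal(E) \simeq \ast$, the total space already meets the contractibility hypothesis of Lemma \ref{universalpb}. The obstruction, as foreshadowed in Remark \ref{extra}(2), is that $S^\dcal_{\rm aff}(X)$ need not be Kan, so Lemma \ref{universalpb} does not apply directly.

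To circumvent this, I would take a fibrant replacement $j: S^\dcal_{\rm aff}(X) \hookrightarrow \widehat{S^\dcal_{\rm aff}(X)}$ constructed as in Section \ref{section3.1}, choose a classifying map $\varphi: S^\dcal_{\rm aff}(X) \to \overline{W}S^\dcal_{\rm aff}(G)$ for $S^\dcal_{\rm aff}(\pi)$, and extend $\varphi$ across $j$ to a map $\varphi': \widehat{S^\dcal_{\rm aff}(X)} \to \overline{W}S^\dcal_{\rm aff}(G)$, using that $j$ is a trivial cofibration and $\overline{W}S^\dcal_{\rm aff}(G)$ is Kan. Pulling back the universal bundle $q: WS^\dcal_{\rm aff}(G) \to \overline{W}S^\dcal_{\rm aff}(G)$ along $\varphi'$ produces a simplicial principal $S^\dcal_{\rm aff}(G)$-bundle $\pi': E' \to \widehat{S^\dcal_{\rm aff}(X)}$ equipped with a morphism of principal bundles covering $j$.

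The crux of the argument, and the step I expect to be the main obstacle, is verifying that the induced map of total spaces $S^\dcal_{\rm aff}(E) \to E'$ is a weak equivalence. I would try to deduce this from the standard fact that pulling back a fibration along a trivial cofibration of Kan targets induces a weak equivalence on total spaces (for instance, \cite[Theorem 4.2 in Chapter III]{GZ}), applied to the fibration $q$ and the inclusion $j$. Once this is in hand, $E'$ is weakly contractible; since it is a principal $S^\dcal_{\rm aff}(G)$-bundle over a Kan base it is itself Kan, so Lemma \ref{universalpb} forces $\pi'$ to be universal. Hence $\widehat{S^\dcal_{\rm aff}(X)}$, and therefore $S^\dcal_{\rm aff}(X)$ up to homotopy, is a classifying complex for the simplicial group $S^\dcal_{\rm aff}(G)$.

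Finally, to pass from $S^\dcal_{\rm aff}$ to $S^\dcal$, I would appeal to Theorem \ref{thm1.1}: the weak equivalences $S^\dcal_{\rm aff}(X) \to S^\dcal_{\rm sub}(X) \hookrightarrow S^\dcal(X)$ carry the classifying property across on bases, while the corresponding natural transformations applied to the diffeological group $G$ yield weak equivalences of simplicial groups (they arise from morphisms of cosimplicial diffeological spaces, hence respect the group structure induced by the multiplication on $G$), and consequently induce a homotopy equivalence $\overline{W}S^\dcal_{\rm aff}(G) \simeq \overline{W}S^\dcal(G)$. Combining these identifications yields that $S^\dcal(X)$ is a classifying complex of $S^\dcal(G)$, as desired.
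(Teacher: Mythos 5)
Your proposal is correct and follows essentially the same route as the paper's proof: apply Theorem \ref{bdletriviality} to get a simplicial principal bundle, pass to the fibrant replacement of the base by extending a classifying map to $\overline{W}S^\dcal_{\rm aff}(G)$ and pulling back $WS^\dcal_{\rm aff}(G)$, invoke \cite[Theorem 4.2 in Chapter III]{GZ} for the weak equivalence of total spaces, and conclude via Lemma \ref{universalpb} and Theorem \ref{thm1.1}. Your added remark on why the classifying complexes of $S^\dcal_{\rm aff}(G)$ and $S^\dcal(G)$ agree is a reasonable elaboration of a step the paper leaves implicit.
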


\begin{cor}\label{BG}
	Let $G$ be a diffeological group. Then, the singular complex $S^\dcal(BG)$ of the classifying space $BG$ is a classifying complex of the simplicial group $S^\dcal(G)$.
	\begin{proof}
		Recall from \cite[Corollary 5.5]{CW17} that $EG$ is smoothly contractible. Then, the result is immediate from Proposition \ref{classifyingsp}.
	\end{proof}
\end{cor}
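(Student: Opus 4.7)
The plan is to apply Proposition \ref{classifyingsp} directly to the universal $\dcal$-numerable principal $G$-bundle $\pi_G: EG \longrightarrow BG$ constructed by Christensen and Wu. This reduces the corollary to two verifications: first, that $\pi_G$ is a diffeological principal $G$-bundle in the sense of Definition \ref{diffbdle}(2), and second, that the total space $EG$ is weakly contractible.

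For the first verification, observe that $\mathsf{P}\dcal G_{\rm num}$ sits inside $\mathsf{P}\dcal G_{\rm diff}$ via the obvious fully faithful embeddings recorded just after Definition \ref{diffbdle}; in particular, any $\dcal$-numerable (hence locally trivial) principal $G$-bundle automatically satisfies the pullback-along-plots local triviality required of a diffeological principal bundle. For the second verification, Christensen and Wu establish in \cite[Corollary 5.5]{CW17} that $EG$ is smoothly (i.e.\ $\dcal$-)contractible. Since $S^\dcal$ preserves $\dcal$-homotopy by Lemma \ref{homotopypreserving}, a smooth contraction of $EG$ induces a simplicial homotopy on $S^\dcal(EG)$ between the identity and a constant map, so $S^\dcal(EG) \simeq \ast$; this is precisely the notion of weak contractibility recalled at the start of Section 6.2.

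With both hypotheses verified, Proposition \ref{classifyingsp} applied to $\pi_G$ yields that $S^\dcal(BG)$ is a classifying complex of $S^\dcal(G)$, as desired. There is essentially no obstacle in this last step; the real content lies upstream, in Proposition \ref{classifyingsp} itself and ultimately in Theorem \ref{bdletriviality} together with Theorem \ref{thm1.1}, which together allow one to convert a diffeological principal bundle with weakly contractible total space into a universal simplicial principal bundle after applying $S^\dcal$. The only conceptual point worth a second glance is the consistency of terminology: the smooth contractibility of \cite{CW17} matches our notion of $\dcal$-contractibility because the canonical diffeologies on $I = [0,1]$ coming from $\Delta^1$ and from the sub-diffeology of $\rbb$ agree (cf.\ Lemma \ref{simplex}), so the two homotopy notions coincide.
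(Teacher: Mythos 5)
Your proposal is correct and follows exactly the paper's route: apply Proposition \ref{classifyingsp} to $\pi_G: EG \to BG$, using the inclusion $\mathsf{P}\dcal G_{\rm num} \hookrightarrow \mathsf{P}\dcal G_{\rm diff}$ and the smooth (hence weak) contractibility of $EG$ from \cite[Corollary 5.5]{CW17}. The paper leaves these two verifications implicit, so your write-up is simply a more explicit version of the same argument.
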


\begin{cor}\label{aspherical}
		Suppose that $X$ is a pointed diffeological space which has the weakly contractible universal covering. Then, the singular complex $S^\dcal(X)$ is the Eilenberg-Mac Lane complex $K(\pi_1^\dcal(X), 1)$. In particular, the (co)homology of $X$ is just the (co)homology of the group $\pi_1^\dcal(X)$.
	\begin{proof}
		Recall from \cite[8.26]{IZ} that the universal covering $\pi: Z \longrightarrow X$ is a diffeological principal $\pi^{\dcal}_1(X)$-bundle. Then, the result follows from Proposition \ref{classifyingsp}.
	\end{proof}
\end{cor}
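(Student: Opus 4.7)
The plan is to apply Proposition \ref{classifyingsp} directly to the universal covering $\varpi: Z \longrightarrow X$. By \cite[8.26]{IZ}, this map is a diffeological principal $\pi_1^\dcal(X)$-bundle, where $\pi_1^\dcal(X)$ is viewed as a discrete diffeological group. Since by hypothesis $Z$ is weakly contractible, Proposition \ref{classifyingsp} immediately yields that $S^\dcal(X)$ is a classifying complex of the simplicial group $S^\dcal(\pi_1^\dcal(X))$.

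The next step is to identify $S^\dcal(\pi_1^\dcal(X))$ with the constant simplicial group on the discrete group $\pi_1^\dcal(X)$. Since each $\Delta^p$ is smoothly connected (Lemma \ref{simplex}(1)) and any smooth map from a connected diffeological space to a discrete one is constant, we obtain $\dcal(\Delta^p, \pi_1^\dcal(X)) = \pi_1^\dcal(X)$ for every $p \ge 0$, with all face and degeneracy maps equal to the identity. Hence $S^\dcal(\pi_1^\dcal(X))$ is the constant simplicial group on $\pi_1^\dcal(X)$.

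It then remains to recognize that a classifying complex of a constant simplicial group $G$ is precisely the Eilenberg-Mac Lane complex $K(G, 1)$. This is the standard identification $\overline{W}G = BG$ in the nerve-theoretic sense for discrete $G$, which has the required homotopy type. Since classifying complexes are unique up to homotopy, we conclude $S^\dcal(X) \simeq K(\pi_1^\dcal(X), 1)$. The final sentence about (co)homology then follows at once from the definitions of $H_\ast(X; A)$ and $H^\ast(X; A)$ recalled in Section 1 together with the classical fact that the (co)homology of $K(G, 1)$ coincides with the group (co)homology of $G$.

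No step here should pose a serious obstacle, since Proposition \ref{classifyingsp} has already absorbed the essential work (combining Theorems \ref{thm1.1} and \ref{bdletriviality} with the classifying theory for simplicial principal bundles). The only minor item requiring explicit verification is that $S^\dcal$ sends a discrete diffeological group to its constant simplicial group, which follows from the connectedness of $\Delta^p$. Everything else is formal.
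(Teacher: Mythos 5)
Your proof is correct and follows essentially the same route as the paper: apply Proposition \ref{classifyingsp} to the universal covering, which is a diffeological principal $\pi_1^\dcal(X)$-bundle by \cite[8.26]{IZ}. The extra details you supply (that $S^\dcal$ of a discrete group is the constant simplicial group, and that its classifying complex is $K(G,1)$) are correct elaborations of what the paper leaves implicit.
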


\begin{rem}\label{SD}
	\begin{itemize}
		\item[{\rm (1)}] We can prove Corollary \ref{BG}, using neither the functor $S^\dcal_{\rm aff}$ nor Theorem \ref{thm1.1}. In fact, by Remark \ref{bdletriviality2}(2) and Lemma \ref{universalpb}, $S^\dcal(\pi_G): S^\dcal(EG)\longrightarrow S^\dcal(BG)$ is a universal principal $S^\dcal(G)$-bundle. However, the construction in the proof of Proposition \ref{classifyingsp} is useful in the proof of Proposition \ref{cor1.3}.
		\item[{\rm (2)}] We can also prove Corollary \ref{aspherical}, using neither the functor $S^\dcal_{\rm aff}$ nor Theorem \ref{thm1.1}. In fact, Corollary \ref{aspherical} follows from Proposition \ref{cover}. Alternatively, Corollary \ref{aspherical} follows from \cite[8.24]{IZ} and Theorem \ref{homotopygp}.
	\end{itemize}
\end{rem}
Corollary \ref{aspherical} determines the homotopy type of $S^\dcal(X)$, and hence the (co)homology of $X$ for well-known homogeneous diffeological spaces $X$ such as irrational tori and $\rbb/\qbb$.

\begin{exa}\label{R/Q}
	\begin{itemize}
		\item[{\rm (1)}] Let $\gamma: \zbb^m \longrightarrow \rbb^n$ be a monomorphism of abelian groups with $\Gamma:= \gamma(\zbb^m)$ dense, and consider the irrational torus $T_{\Gamma}=\rbb^{n}/\Gamma$. By Corollary \ref{aspherical}, the singular complex $S^\dcal(T_\Gamma)$ of $T_\Gamma$ is just the $m$-dimensional torus $K(\zbb^m,1)$. Hence, $H^\ast(T_\Gamma; \zbb)\cong \Lambda(\zbb^m)$ holds.
		\item[{\rm (2)}] The singular complex $S^\dcal(\rbb/\qbb)$ of the quotient diffeological group $\rbb/\qbb$ is just the rationalized circle $K(\qbb,1)$, and hence $\widetilde{H}_\ast(\rbb/\qbb; \zbb) = H_1 (\rbb /\qbb ;\zbb )=\qbb$. More generally, let $A$ be a countable subgroup of $\fbb\:(= \rbb, \cbb)$. Then, the singular complex $S^\dcal(\fbb/ A)$ of the quotient diffeological group $\fbb/ A$ is just $K(A, 1)$.
	\end{itemize}
\end{exa}

\begin{rem}\label{Tgamma}
Iglesias-Zemmour and Kuribayashi obtained calculational results similar to Example \ref{R/Q}(1) for other cohomology theories of irrational tori (\cite[Corollary]{IZ20}, \cite[Proposition 3.2]{Kuri20'}, and \cite[Remark 2.9]{Kuri20}). On the other hand, the de Rham cohomology $H^{\ast}_{dR}(T_{\Gamma})$ is isomorphic to $\Lambda(\rbb^n)$ \cite[Exercise 119]{IZ}, which along with Example \ref{R/Q}(1), shows that the de Rham theorem does not hold for irrational tori. This motivates the study of a forthcoming paper \cite{dRt}.
\end{rem}

Next, we introduce new aspherical homogeneous diffeological spaces, using Corollary \ref{aspherical}.

\begin{exa}\label{unipotent}
	Let $k$ be a countable subfield of $\fbb\:(=\rbb, \cbb)$ (e.g. an algebraic number field or a countable extension of $\qbb$ such as $\overline{\qbb} \cap \rbb$ or $\overline{\qbb}$). For an algebraic group $G$ over $k$, we can consider the homogeneous diffeological space $G(\fbb)/G(k)$. 
	
	If $G$ is a unipotent algebraic group over $k$, then the exponential map ${\rm exp}: \mathfrak{g}\longrightarrow G$ is an isomorphism of algebraic varieties, where $\mathfrak{g}$ is the Lie algebra of $G$ (see \cite[p. 289]{Milne}). Thus, we have the diffeomorphism
	\[
	\mathfrak{g}(\fbb) \overset{\rm exp}{\underset{\cong}{\longrightarrow}} G(\fbb)
	\]
	and the universal covering
	\[
	G(\fbb) \longrightarrow G(\fbb)/ G(k)
	\]
	of $G(\fbb)/ G(k)$. Hence, by Corollary \ref{aspherical},
	\[
	S^\dcal(G(\fbb)/ G(k)))= K(G(k), 1)
	\]
	holds, so the (co)homology of $G(\fbb)/G(k)$ is that of the group $G(k)$. The group $U_n(k)$ of upper triangular unipotent matrices and the Hisenberg group $H_n(k)$ (see \cite[p. 54]{OVIII}) are typical examples of unipotent algebraic groups.

        Further if $G$ is defined over a subring $k_0$ of $k$, we have
        $$
        S^{\dcal}(G(\fbb)/G(k_0))=K(G(k_0),1).
        $$
        We are interested in the case where $k_0$ is the ring $\ocal_{k}$ of integers of an algebraic number field $k$. If $k$ is an algebraic number field of degree $n$ with $\qbb \subsetneqq k \subsetneqq \rbb$, then $k_0 \ (=\ocal_{k})$ is a finitely generated free $\zbb$-module of rank $n$, and hence is dense in $\rbb$.
\end{exa}

\subsection{Proof of Proposition \ref{cor1.3}}
In this subsection, we prove Proposition \ref{cor1.3}.

\begin{proof}[Proof of Proposition \ref{cor1.3}]
	Let $\pi_G:EG\longrightarrow BG$ denote the universal $\dcal$-numerable principal $G$-bundle constructed in \cite{CW17}. Then, by Theorem \ref{bdletriviality}(1), $S^{\dcal}_{\mathrm{aff}}(\pi_G): S^{\dcal}_{\mathrm{aff}}(EG)\to S^{\dcal}_{\mathrm{aff}}(BG)$ is a principal $S^\dcal_{\rm aff}(G)$-bundle.
	
	We prove the result in two steps.\vspace{2mm}\\
	Step 1. {\sl Construction of a universal principal $S^\dcal_{\rm aff}(G)$-bundle which is an extension of $S^\dcal_{\rm aff}(\pi_G)$}. Recall from \cite[Corollary 5.5]{CW17} that $EG$ is smoothly contractible. Then, by the proof of Proposition \ref{classifyingsp}, we have a universal principal $S^\dcal_{\rm aff}(G)$-bundle $S^\dcal_{\rm aff}(\pi_G)': S^\dcal_{\rm aff}(EG)'\longrightarrow S^\dcal_{\rm aff}(BG)\hspace{0.1em}\mathbf{\hat{}}$ and a morphism of a principal $S^\dcal_{\rm aff}(G)$-bundles
	\begin{eqnarray*}
	\begin{tikzcd}
		S^{\dcal}_{\mathrm{aff}}(EG) \arrow[r, hook] \arrow[d, "S^{\dcal}_{\mathrm{aff}}(\pi_G)"'] & S^{\dcal}_{\mathrm{aff}}(EG)'\arrow[d, "S^{\dcal}_{\mathrm{aff}}(\pi_G)'"] \\
		S^{\dcal}_{\mathrm{aff}}(BG) \arrow[r, hook] & S^{\dcal}_{\mathrm{aff}}(BG)\hspace{0.1em}\mathbf{\hat{}} .
	\end{tikzcd}
	\end{eqnarray*}
	Step 2. {\sl Definition of $\alpha(P)$}. Let $\pi:P\longrightarrow X$ be a diffeological principal $G$-bundle. Since $S^\dcal_{\rm aff}(\pi):S^\dcal_{\rm aff}(P)\longrightarrow S^\dcal_{\rm aff}(X)$ is a principal $S^\dcal_{\rm aff}(G)$-bundle (Theorem \ref{bdletriviality}(1)), we have a classifying map $\varphi_P:S^\dcal_{\rm aff}(X)\longrightarrow S^\dcal_{\rm aff}(BG)\hspace{0.1em}\mathbf{\hat{}}$.
	
	Note that $H^{\ast}(Z; A) := H^{\ast} {\rm Hom}(\zbb S^\dcal(Z), A)\cong H^{\ast}{\rm Hom}(\zbb S^\dcal_{\rm aff}(Z), A)$ (see Corollary \ref{quism}) and that  $H^{\ast}{\rm Hom}(\zbb K, A) \\ \cong H^{\ast}{\rm Hom}(\zbb K \hspace{0.15em} \mathbf{\hat{}}, A)$. Then, we can define $\alpha(P)\in H^k(X; A)$ by $\alpha(P) = \varphi_P^{\ast}\alpha$. We can use Theorem \ref{bdletriviality} to show that $\alpha(f^\ast P)=f^\ast \alpha(P)$ holds, and hence that $\alpha(\cdot)$ defines a characteristic class for diffeological principal $G$-bundles. 
	
	Similarly, we can use Theorem \ref{bdletriviality} to show that $\alpha(\cdot)$ extends the characteristic class $\alpha(\cdot)$ for $\dcal$-numerable principal $G$-bundles (see Section 1 for the definition).
\end{proof}

\begin{rem}\label{Kan}
	The author does not know whether $S^\dcal_{\rm aff}(BG)$ is always Kan. If $S^\dcal_{\rm aff}(BG)$ is always Kan, the proof of Proposition \ref{cor1.3} becomes simpler (see Lemma \ref{universalpb}).
\end{rem}

Let us apply Proposition \ref{cor1.3} to special cases.

\begin{exa}\label{special}
	\begin{itemize}
		\item[{\rm (1)}] Let $\pi: Z \longrightarrow X$ be a Galois covering with structure group $\Gamma$ (see \cite[p. 262]{IZ}). Then, for a given class $\alpha \in H^k(\Gamma; A)$ $(\cong H^k (B\Gamma; \:A))$, the class $\alpha(Z)\in H^k(X; A)$ is defined by Proposition \ref{cor1.3}.
		\item[{\rm (2)}] Let $G$ be a diffeological group and $H$ a diffeological subgroup of $G$. Then, for a given class $\alpha \in H^k(BH;\: A)$, the class $\alpha(G)\in H^k(G/H; A)$ is defined by Proposition \ref{cor1.3} (see \cite[8.15]{IZ}).
	\end{itemize}
\end{exa}

If a relevant diffeological principal bundle in Example \ref{special} happens to be $\dcal$-numerable, then the class at issue is just the image of $\alpha$ under the homomorphism induced by the classifying map. However, this is not the case in general. See the following example, which specializes both Parts 1 and 2 of Example \ref{special}.

\begin{exa}\label{irrational}
	Let $\gamma: \zbb^m \longrightarrow \rbb^n$ be a monomorphism of abelian groups with $\Gamma:= \gamma (\zbb^m)$ dense, and consider the diffeological principal $\zbb^m$-bundle $P := \rbb^n \xrightarrow{\ \pi \ } T_\Gamma$ over the irrational torus $T_\Gamma$ (see Examples \ref{R/Q}(1) and \ref{special}(2)); note that $T_\Gamma$ is a diffeological group  and that $\pi$ is the universal covering of $T_\Gamma$.
	
	Since $S^\dcal_{\rm aff}(T_\Gamma)$ is already Kan (see \cite[Proposition 4.30 or Theorem 4.34]{CW}), $S^\dcal_{\rm aff}(\pi): S^\dcal_{\rm aff}(P)\longrightarrow S^\dcal_{\rm aff}(T_\Gamma)$ is a universal principal $\zbb^m$-bundle (see Step 1 in the proof of Proposition \ref{cor1.3}), and hence, we have a classifying map $\varphi_P: S^\dcal_{\rm aff}(T_\Gamma)\longrightarrow S^\dcal_{\rm aff}(B \zbb^m)\hspace{0.1em}\mathbf{\hat{}}$ which is obviously a homotopy equivalence in $\scal$.
	
	Since $S^\dcal_{\rm aff}(B\zbb^m)\hspace{0.1em}\mathbf{\hat{}}$ is just the Eilenberg-MacLane complex $K(\zbb^m,1)$, $H^\ast(B\zbb^m ; A) \cong (\Lambda \zbb^m)\otimes A$ holds. Thus, for any $\alpha\in H^\ast(B\zbb^m; A)$, the characteristic class $\alpha(P)\in H^\ast(T_\Gamma; A)$ is just the image $\varphi^\ast_P(\alpha)$ under the isomorphism $H^\ast(T_\Gamma; A) \overset{\varphi^\ast_P}{\underset{\cong}{\longleftarrow}} H^\ast(B\zbb^m; A)$.

On the other hand, since $\pi: P \longrightarrow T_\Gamma$ is not locally trivial (see Example \ref{proper}(1)), $P$ has no classifying map to $B\zbb^m$. Further, every nonzero element $\beta \in \widetilde{H}^\ast (T_\Gamma; \: A)$ is not contained in the image of the homomorphism induced by any smooth map $f: T_\Gamma \longrightarrow B\zbb^m$. In fact, we have the commutative diagram
\begin{center}
	\begin{tikzcd}
		S^\dcal(T_\Gamma) \arrow[d] \arrow[r, "S^\dcal(f)"] & S^\dcal(B\zbb^m) \arrow[d]\\
		S(\widetilde{T}_\Gamma) \arrow[r, "S(\widetilde{f})"] & S(\widetilde{B\zbb^m})
	\end{tikzcd}
\end{center}
(see Section 2.3). Since $S^\dcal(B\zbb^m) \longrightarrow S(\widetilde{B\zbb^m})$ is a homotopy equivalence (see \cite[Corollary 5.16]{smh}) and $S(\widetilde{T}_\Gamma) \simeq \ast$, $S^\dcal(f)$ is homotopic to a constant map. (We actually show that $B\zbb^m$ is smoothly homotopy equivalent to the torus $T^m$, and hence that $f$ is smoothly homotopic to a constant map; see a forthcoming paper.)
\end{exa}

\subsection{Sets of characteristic classes for the classes $\mathsf{P}\dcal G$, $\mathsf{P}\dcal G_{\rm num}$, and $\mathsf{P}\dcal G_{\rm diff}$}
In this subsection, we discuss the sets of characteristic classes for the classes (or categories) $\mathsf{P}\dcal G$, $\mathsf{P}\dcal G_{\rm num}$, and $\mathsf{P}\dcal G_{\rm diff}$ (see Definition \ref{diffbdle}) and their relation.

Let $\pcal$ denote one of the categories $\mathsf{P}\dcal G$, $\mathsf{P}\dcal G_{\rm num}$, and $\mathsf{P}\dcal G_{\rm diff}$. For an abelian group $A$, $\text{char}(\pcal; A)$ denotes the set of characteristic classes with coefficients in $A$ for the class $\pcal$. Then, by \cite[Theorem 5.10]{CW17} and Proposition \ref{cor1.3}, we have the natural bijection
\[
{\rm char} (\mathsf{P}\dcal G_{\rm num}; A)\cong H^\ast(BG; A)
\]
and the retract diagram
\[
\begin{tikzpicture}
\node at (0,0)
{\begin{tikzcd}[column sep=large]
	{\rm char}(\mathsf{P}\dcal G_{\rm num}; A) \arrow[r, "ext"] & {\rm char}(\mathsf{P}\dcal G_{\rm diff}; A) \arrow[r, "res"] & {\rm char}(\mathsf{P}\dcal G_{\rm num}; A),
	\end{tikzcd}};
\draw[->] (-5,-0.4)--(-5,-1)--(5,-1)--(5,-0.4);
\node [right] at (0,-1.3) {1};
\end{tikzpicture}
\]
where $res$ is the obvious restriction map and $ext$ is the extension map introduced in Proposition \ref{cor1.3}.

We can also show that ${\rm char}(\mathsf{P}\dcal G; A)\cong {\rm char}(\mathsf{P}\dcal G_{\rm num}; A)$. To prove this, we define the map $ext: {\rm char}(\mathsf{P}\dcal G_{\rm num}; A) \longrightarrow {\rm char}(\mathsf{P}\dcal G;A)$ as follows. Let $\alpha(\cdot)$ be an element of ${\rm char}(\mathsf{P} \dcal G_{\rm num}; A)$ corresponding to $\alpha \in H^\ast(BG; A)$. For a given locally trivial principal $G$-bundle $\pi: P \longrightarrow X$, consider the $CW$-approximation $|S^\dcal(X)|_\dcal \xrightarrow{\ \ p_X\ \ } X$ in $\dcal$, which is the counit of the adjoint pair ($|\ |_\dcal, S^\dcal$) (see Remark \ref{modelstr}(2) and \cite[Section 3]{smh}). Since we can prove that every $CW$-complex in $\dcal$ is smoothly paracompact (see \cite{dRt}), the pullback $p_X^\ast P$ is a $\dcal$-numerable principal $G$-bundle. Thus, we can define the characteristic class $\alpha(P)$ of $P$ by $\alpha(P)=\alpha(p_X^\ast P)$ under the identification $H^\ast (X;A)\cong  H^\ast(|S^\dcal(X)|_\dcal, A)$. Then, it is obvious that the map $ext: {\rm char}(\mathsf{P} \dcal G_{\rm num}; A)\longrightarrow {\rm char}(\mathsf{P} \dcal G; A)$ and the obvious restriction map $res: {\rm char}(\mathsf{P}\dcal G; A)\longrightarrow {\rm char}(\mathsf{P}\dcal G_{\rm num}; A)$ are mutually inverses. We can easily see from Theorem \ref{bdletriviality} that the map $ext: {\rm char}(\mathsf{P}\dcal G_{\rm num}; A)\longrightarrow {\rm char}(\mathsf{P} \dcal G; A)$ is just the corestriction of $ext:{\rm char}(\mathsf{P}\dcal G_{\rm num}; A )\longrightarrow {\rm char}(\mathsf{P} \dcal G_{\rm diff}; A)$. (Recall that the class of locally trivial principal $G$-bundles also does not have the homotopy invariance property with respect to pullback and hence that it has no classifying space; see \cite[Section 3]{CW17}.)

We end this section by raising a problem on diffeological principal bundles.
\vspace{1ex}

\noindent Problem. Let $X$ be a $CW$-complex in $\dcal$ (or more generally, a cofibrant diffeological space); see \cite[Section 3.1]{smh}. Is every diffeological principal $G$-bundle over $X$ locally trivial?
\vspace{1ex} 

This problem asks whether there exists a non-locally-trivial diffeological principal bundle over a good diffeological space; all the non-locally-trivial diffeological principal bundles the author knows are ones over bad diffeological spaces.

If Problem is solved affirmatively, we can use the $CW$-approximation $|S^\dcal(X)|_\dcal \xrightarrow{\ \ p_X\ \ } X$ to directly construct the map
\[
{\rm char}(\mathsf{P} \dcal G_{\rm num}; A) \xrightarrow{\ \ ext \ \ } {\rm char}(\mathsf{P} \dcal G_{\rm diff}; A)
\]
which is the inverse of ${\rm char}(\mathsf{P}\dcal G_{\rm diff}; A) \xrightarrow{\ \ res\ \ } {\rm char}(\mathsf{P}\dcal G_{\rm num}; A)$.

Further, if Problem is solved affirmatively, then we can replace the singular functor $S^\dcal_{\rm aff}$ with $S^\dcal$ in Theorem \ref{bdletriviality} and Remark \ref{bdletriviality2}(1).

\begin{rem}\label{twoext}
	\begin{itemize}
		\item[{\rm (1)}] Results similar to those mentioned above hold in the category $\tcal$ of topological spaces. More precisely, the homotopy invariance property with respect to pullback need not hold for topological principal $G$-bundles which are not numerable, and hence the class of topological principal $G$-bundles does not have a classifying space (see \cite[Section 3]{CW17}, \cite{An}, and \cite{Go}). However, we have two ways of extending the characteristic class associated to a cohomology class $\alpha$ of the (topological) classifying space $BG$; one uses the $CW$-approximation $|S(X)|\overset{p_X}{\longrightarrow} X$ of the base and the other uses the theory of simplicial principal bundles. We can easily see that they define the same extension; the resulting map is denoted by
		\[
		{\rm char}(\mathsf{P}\tcal G_{\rm num}; A) \xrightarrow{\ \ ext\ \ } {\rm char}(\mathsf{P} \tcal G; A),
		\]
		where ${\rm char}(\mathsf{P}\tcal G_{\rm num}; A)$ and ${\rm char}(\mathsf{P} \tcal G; A)$ are defined in a way similar to the diffeological case. We then see that
		\[
		{\rm char} (\mathsf{P}\tcal G_{\rm num}; A) \cong H^\ast(BG;A)
		\]
		and that
		\[
		{\rm char}(\mathsf{P}\tcal G_{\rm num}; A) \overset{ext}{\underset{res}{\rightleftarrows}} {\rm char}(\mathsf{P}\tcal G; A)
		\]
		are mutually inverses.\vspace{-1.0ex}

\hspace{1em}The results here remain true even if $\tcal$ is replaced with the category $\czero$ of arc-generated spaces (see \cite[Proposition 5.14(1)]{smh}).
		\item[{\rm (2)}] Since the underlying topological space functor $\widetilde{\cdot}:\dcal \longrightarrow \ccal^0$ preserves finite products \cite[Proposition 2.13]{origin}, it induces the functor
$$ \widetilde{\cdot}: \mathsf{P}\dcal G \longrightarrow \mathsf{P}\ccal^{0}\widetilde{G} $$
(see \cite[Lemma 5.7 and Remark 5.8]{smh}). Thus, we use this functor to study the relation between characteristic classes of smooth principal $G$-bundles and ones of continuous principal $G$-bundles.\\
\hspace{1em}The natural inclusion $S^\dcal X \hookrightarrow S\widetilde{X}$ (see Section 2.3) induces the natural homomorphism
		\[
		H^\ast(X;A) \xleftarrow{\ \ \psi_X\ \ } H^\ast(\widetilde{X}; A),
		\]
		which along with \cite[Proposition 5.14]{smh}, defines the horizontal arrows in the following commutative diagram:
		\begin{center}
		\begin{tikzcd}
		H^\ast(BG; A) \arrow[d, "\cong" sloped] & H^\ast(B\widetilde{G}; A) \arrow[l, "\psi_{BG}"'] \arrow[d, "\cong"' sloped]\\
		{\rm char}(\mathsf{P}\dcal G_{\rm num}; A) \arrow[d, "ext"', "\cong" sloped] & {\rm char}(\mathsf{P}\czero \widetilde{G}_{\rm num}; A)\arrow[d, "ext", "\cong"' sloped] \arrow[l]\\
		{\rm char}(\mathsf{P}\dcal G; A) & {\rm char}(\mathsf{P}\czero \widetilde{G}; A). \arrow[l]
		\end{tikzcd}
		\end{center}
		We can easily see that the equality
		\[
		(\psi_{BG} \alpha)(P) = \psi_X(\alpha(\widetilde{P}))
		\]
		holds for $P\in \mathsf{P} \dcal G$. \\
\hspace{1em}If $G$ is a Lie group (or more generally, in the class $\vcal_\dcal$), then $H^\ast(BG; A)\xleftarrow{\psi_{BG}} H^\ast(B\widetilde{G}; A)$ is an isomorphism (see  \cite[Theorem 11.2, and Corollaries 1.6 and 5.16]{smh}), and hence all the arrows in the above commutative diagram are bijective. (Here, a Lie group is defined to be a group in the category $C^{\infty}$ of $C^{\infty}$-manifolds in the sence of \cite[Section 27]{KM}; see \cite[Section 2.2]{smh}.)
	\end{itemize}
\end{rem}

\section*{Acknowledgement}
In the conference ``Building-up Differentiable Homotopy Theory 2020" held at Shinshu University, Prof. J. D. Christensen gave an excellent talk on smooth classifying spaces. After the talk, Prof. Katsuhiko Kuribayashi asked me whether the ``only if'' part of Theorem \ref{bdletriviality}(1) holds and then we briefly discussed this subject with Prof. Christensen, which lead me to Proposition \ref{cor1.3}. I would like to show my greatest appreciation to them.


\begin{thebibliography}{[MDF]}

\bibitem{An}
R. Andrade, {\sl Example of fiber bundle that is not a fibration}, https://mathoverflow.net/q/126228.





\bibitem{BJ}
T. Br\"{o}cker and K. J\"{a}nich, {\sl Introduction to differential topology,} Cambridge University Press, (1982).

\bibitem{Bunk}
S. Bunk, {\sl The R-local homotopy theory of smooth spaces}, Universit\"{a}t Hamburg, Fachbereich Mathematik, 2020.

\bibitem{CSW}
J. D. Christensen, G. Sinnamon, and E. Wu, {\sl The D-topology
	for diffeological spaces,} Pacific Journal of Mathematics \textbf{272} (2014), no. 1, 87-110.

\bibitem{CW}
J. D. Christensen and E. Wu, {\sl The homotopy theory of diffeological
	spaces,} New York J. Math \textbf{20} (2014) 1269-1303.




\bibitem{CW17}
J. D. Christensen and E. Wu, {\sl Smooth classifying spaces}, Israel Journal of Mathematics 241.2 (2021): 911-954.

\bibitem{DS}
W. G. Dwyer and J. Spalinski, {\sl Homotopy theories and model
	categories,} Handbook of algebraic topology (1995) 73-126.


\bibitem{EM}
S. Eilenberg and S. MacLane, {\sl Acyclic models,} Amer. J. math. 75.1 (1953) 189-199.



\bibitem{FK}
A. Fr\"{o}licher and A. Kriegl, {\sl Linear spaces and differentiation theory}, Vol. 13, John Wiley and Sons Inc, (1988).

\bibitem{GZ}
P. Gabriel and M. Zisman, {\sl Calculus of fractions and
	homotopy theory}, Vol. 35, Springer Science \& Business Media, (2012)

\bibitem{Go}
T. Goodwillie, {\sl Non trivial vector bundle over non-paracompact contractible space}, https://mathoverflow.net/q/106563.


\bibitem{GJ}
P. G. Goerss and J. F. Jardine, {\sl Simplicial Homotopy theory}, Birkh\"{a}user,
Verlag, Basel (1999).




\bibitem{H}
G. Hector, {\sl G\'{e}om\'{e}trie et topologie des espaces diff\'{e}ologiques,}
Analysis and geometry in foliated manifolds (Santiago de Compostela, 1994)
(1995) 55-80.




\bibitem{IZ}
P. Iglesias-Zemmour, {\sl Diffeology,} Vol. 185, American Mathematical
Soc, (2013).

\bibitem{IZ20}
P. Iglesias-Zemmour, {\sl Cech-de Rham bicomplex in diffeology}, preprint, 2019, available at http://math.huji.ac.il/~piz/documents/CDRBCID.pdf.





\bibitem{K}
H. Kihara, {\sl Minimal fibrations and the organizing theorem of
	simplicial homotopy theory,} Ricerche di Matematica \textbf{63} (2014), no. 1, 79-91.

\bibitem{origin}
H. Kihara, {\sl Model category of diffeological spaces,} Journal of Homotopy and Related Structures, 14.1 (2019): 51-90.

\bibitem{smh}
H. Kihara, \textit{Smooth Homotopy of Infinite-Dimensional $ C^{\infty} $-Manifolds}, to appear in Memoirs of the American Mathematical Society, available at arXiv:2002.03618 (2020).

\bibitem{dRt}
H. Kihara, {\sl De Rham calculus on diffeological spaces}, in preparation.


\bibitem{KM}
A. Kriegl and P. W. Michor, {\sl The convenient setting of global
	analysis,} Vol. 53, American Mathematical Society (1997).



\bibitem{Kuri20}
K. Kuribayashi, {\sl Simplicial cochain algebras for diffeological spaces}, Indagationes Mathematicae 31.6 (2020): 934-967.

\bibitem{Kuri20'}
K. Kuribayashi, {\sl A comparison between two de Rham complexes in diffeology}, Proceedings of the American Mathematical Society 149.11 (2021): 4963-4972.



\bibitem{Mac}
S. Mac Lane, {\sl Categories for the Working Mathematician}, Vol. 5. Springer Science \& Business Media, (1998).

\bibitem{May}
J. P. May, {\sl Simplicial objects in algebraic topology,} University
of Chicago Press, 1993


\bibitem{MP}
J. P. May and K. Ponto, {\sl More concise algebraic topology:
	localization, completion, and model categories,} University of Chicago
Press, (2011).

\bibitem{Milne}
J. S. Milne, {\sl Algebraic groups: the theory of group schemes of finite type over a field.}, Vol. 170. Cambridge University Press, 2017.









\bibitem{OVIII}
A. L. Onishchik and E. B. Vinberg, \textit{Lie groups and Lie algebras III, Encyclopedia of Math}, Science 41 (1994).


























\if0
\bibitem{BH}
J. Baez and A. Hoffnung, {\em Convenient categories of smooth
	spaces}, Transactions of the American Mathematical Society \textbf{363} (2011), no. 11, 5789-5825.





\bibitem{CSW}
J. D. Christensen, G. Sinnamon and E. Wu, {\em The D-topology
	for diffeological spaces,} Pacific Journal of Mathematics \textbf{272} (2014), no. 1, 87-110.






\bibitem{DS}
W. G. Dwyer and J. Spalinski, {\em Homotopy theories and model categories,} Handbook of algebraic topology (1995) 73-126.


\bibitem{FK}
A. Fr\"{o}licher and A. Kriegl, {\em Linear spaces and differentiation theory}, Vol. 13, John Wiley and Sons Inc, (1988).


\bibitem{GW}
Giordano, Paolo, and Enxin Wu, {\em Categorical frameworks for generalized
	functions,} Arabian Journal of Mathematics 4.4 (2015) 301-328.

\bibitem{GJ}
P. G. Goerss and J. F. Jardine, {\em Simplicial Homotopy, Theory}, Birkh\"{a}user,
Verlag, Basel (1999).



\bibitem{H}
G. Hector, {\em G\'{e}om\'{e}trie et topologie des espaces diff\'{e}ologiques,}
Analysis and geometry in foliated manifolds (Santiago de Compostela, 1994)
(1995) 55-80.

\bibitem{Hi}
P.  S. Hirschhorn, {\em Model categories and their localizations}, No. 99,
American Mathematical Soc, (2009).


\bibitem{I}
P. Iglesias-Zemmour, {\em Diffeology,} Vol. 185, American Mathematical
Soc, (2013).



\bibitem{K}
H. Kihara, {\em Minimal fibrations and the organizing theorem of
	simplicial homotopy theory,} Ricerche di Matematica \textbf{63} (2014), no. 1, 79-91.

\bibitem{origin}
H. Kihara, {\em Compactly generated model categories of smooth spaces}.

\bibitem{QE}
H. Kihara, {\em Quillen equivalences between the model categories of smooth spaces, simplicial sets, and arc-generated spaces}.
\bibitem{KMActa}

\bibitem{KMsurvey}
A. Kriegl and P. W. Michor, {\em Aspects of the theory of infinite
	dimensional manifolds,} Differential Geometry and its Applications 1.2
(1991): 159-176.
\bibitem{KM}
Kriegl, Andreas, and Peter W. Michor, {\em The convenient setting of global
	analysis,} Vol. 53, American Mathematical Society (1997).


\bibitem{KN}
Kriegl, Andreas, and Louis D. Nel, {\em A convenient setting for holomorphy,}
Cahiers de topologie et g)Iéométrie différentielle categoriques 26.3 (1985)-A
273-309.

\bibitem{MP}
J. P. May and K. Ponto, {\em More concise algebraic topology:
	localization, completion, and model categories,} University of Chicago
Press, (2011).






\bibitem{Siegl}
E. Siegl, {\em A free convenient vector space for holomorphic spaces}, Monatshefte f)Iür Mathematik 119.1-2 (1995): 85-97.-A


\bibitem{St}
A. Stacey, {\em Comparative smootheology,} Theory and Applications
of Categories \textbf{25} (2011), no. 4, 64-117.







\fi
\end{thebibliography}
\end{document}